\newtheorem*{theorem*}{Theorem}
\newtheorem{theorem}{Theorem}
\newtheorem{lemma}[theorem]{Lemma}
\newtheorem{corollary}[theorem]{Corollary}
\newtheorem*{example*}{Example}
\newtheorem*{remark*}{Remark}
\newtheorem{question}{Question}
\newtheorem{theorema}{Theorem}
\newtheorem{corollaria}[theorema]{Corollary}
\def\R{\mathbb{R}}
\def\T{\mathbb{T}}
\def\Z{\mathbb{Z}}
\def\C{\mathcal{C}}
\def\G{\Gamma}
\def\g{\gamma}
\def\f{\varphi}
\def\t{\theta}
\def\e{\varepsilon}
\def\U{\mathcal U}
\def\D{\mathcal D}
\def\A{\mathbb A}
\definecolor{dgreen}{rgb}{0.1,0.6,0.1}
\definecolor{bluegray}{rgb}{0.4, 0.6, 0.8}
\begin{document}

\title{Existence and Nonexistence of Invariant Curves of Coin Billiards}

\author{Santiago Barbieri\footnote{Universitat Politècnica de Catalunya, Barcelona, Spain \newline email: santiago.barbieri@upc.edu} \and Andrew Clarke\footnote{Universitat Politècnica de Catalunya, Barcelona, Spain \newline email: andrew.michael.clarke@upc.edu} \footnote{Centre de Recerca Matemàtica, Bellaterra, Spain}}

\date{}

\maketitle

\begin{abstract}
In this paper we consider the \emph{coin billiard} introduced by M. Bialy. It is a modification of the classical billiard, obtained as the return map of a nonsmooth geodesic flow on a cylinder that has homeomorphic copies of a classical billiard on the top and on the bottom (a \emph{coin}). The return dynamics is described by a map $T$ of the annulus $\A = \T \times (0,\pi)$. We prove the following three main theorems: in two different scenarios (when the height of the coin is small, or when the coin is near-circular) there is a family of KAM curves close to, but not accumulating on, the boundary $\partial \A$; for any noncircular coin, if the height of the coin is sufficiently large, there is a neighbourhood of $\partial \A$ through which there passes no invariant essential curve; and the only coin billiard for which the phase space $\A$ is foliated by essential invariant curves is the circular one. These results provide partial answers to questions of Bialy. Finally, we describe the results of some numerical experiments on the elliptical coin billiard. 
\end{abstract}

\section{Introduction}

At a conference at CIRM in Luminy in 2021 the following modification of the classical billiard was suggested by Bialy \cite{bialy2022open}. Denote by $\G \subset \R^2$ a compact strictly convex domain, such that $\partial \G$ is $C^{\infty}$ (e.g. an ellipse). Consider the nonsmooth geodesic flow on the cylinder $\C \left(\G, \ell \right)$ of height $\ell$ which has homeomorphic copies of $\G$ glued on the top and bottom. On the top and bottom of the surface $\C \left(\G, \ell \right)$ the motion is along straight lines, and on the cylindrical part it is along geodesics of the cylinder. 

The motion can be described as follows. We introduce an arclength parametrisation $\g = \g(\f)$ on $\partial \G$ and denote by $\A$ the phase cylinder with coordinates $(\f, \t)$, where $\t \in (0, \pi)$. A particle starts at $\g(\f)$ on the top of $\C \left(\G, \ell \right)$ and moves along the straight line making an angle $\t$ with $\g'(\f)$ until it comes to $\partial \G$ again with an angle $\t_1$ at $\g(\f_1)$. Then it passes to the surface of the cylinder with the same angle $\t_1$, and travels along the geodesic of the cylinder until it hits the bottom at $\g(\f+\ell \, \cot \t_1)$ with the same angle $\t_1$. Next it passes to the bottom domain with the angle $\t_1$ and so on (see Figure \ref{figure_coin} in Section \ref{sec_coin}). 

This geodesic motion can be described as a map $T$ of the phase cylinder $\A$ via
\[
T: (\f, \t) \mapsto (\f_1 + \ell \, \cot \t_1, \t_1)
\]
which means that $T$ is the composition $T=T_2 \circ T_1$ where $T_1 : (\f, \t) \mapsto (\f_1, \t_1)$ is the usual billiard and $T_2: (\f_1, \t_1) \mapsto (\f_1 + \ell \, \cot \t_1, \t_1)$ is a shift of the $\f$ coordinate. Note that $T$ is symplectic, and that the shift $\ell \, \cot \t_1$ is unbounded as $(\f_1, \t_1)$ approaches $\partial \A$. In \cite{bialy2022open}, Bialy asked the following three questions:

\begin{question}\label{question1}
	Are there invariant curves of $T$? For example, are there KAM curves near the boundary?
\end{question}

\begin{question}\label{question2}
	What are the shapes of $\partial \Gamma$ (other than circles) such that $T$ is an integrable map?
\end{question}

\begin{question}\label{question3}
	Can $T$ be ergodic?
\end{question}

We refer to the configuration cylinder $\C \left(\G, \ell \right)$ as a \emph{coin}, and to the map $T$ as the \emph{coin map}. In general, we refer to the problem as a \emph{coin billiard}\footnote{Note that the terminology `puck billiard' has also been suggested \cite{drivas2024pensive}.}. Aside from the inherent curiosity of these questions, and the fact that they have been stated as open problems related to billiards and geometrical optics, we offer the following two considerations as further motivation to study coin billiards. 

\begin{enumerate}[(I)]
	\item
	Birkhoff wrote of the classical billiard described by the map $T_1$: \emph{This system is very illuminating for the following reason. Any Lagrangian system with two degrees of freedom is isomorphic with the motion of a particle on a smooth surface rotating uniformly about a fixed axis and carrying a conservative field of force with it. In particular if the surface is not rotating and if the field of force is lacking, the paths of the particle will be geodesics. If the surface is now flattened to the form of a plane convex curve $C$, the `billiard ball problem' results} \cite{birkhoff1927on}. Focusing on the latter part of this process, when the surface is smooth, it is not a straightforward task to realise the flattening to a plane curve with equations which can readily be analysed. Considering instead the nonsmooth geodesic flow on the coin $\C \left(\G, \ell \right)$, we now have a homeomorphic copy of the sphere and a map with precise equations depending on only one parameter $\ell >0$, such that when $\ell \to 0$, we obtain the classical billiard map $T_1$ as described by Birkhoff. 
	\item
	The coin billiard is a special case of the so-called \emph{pensive billiard}, recently defined in \cite{drivas2024pensive}, and based on \cite{khesin2022golden}. These systems arise naturally in the setting of hydrodynamics, and can be used to describe the appearance of the golden ratio in the motion of vortices. 
\end{enumerate}

\subsection*{Main Results}

In this paper, we provide partial answers to Questions \ref{question1}-\ref{question3}. In what follows we give qualitative statements of our main results. The more detailed statements can be found in the relevant sections later in the paper. 

With regards to Question \ref{question1}, we have found KAM curves in two different perturbative settings: where the height of the coin is small; and when the domain is near-circular. 

\begin{theorema}\label{theorem_a}
	Consider the coin mapping $T$ on the coin $\C \left(\G, \ell \right)$, where $\G \subset \R^2$ is a bounded and strictly convex domain with real-analytic boundary, and $\ell >0$. Suppose either: 
	\begin{enumerate}[(a)]
		\item \label{item_kama}
		The height of the coin is perturbative: $\ell \ll 1$; or
		\item \label{item_kamb}
		The curvature is close to constant: $\partial \G$ is close to a circle. 
	\end{enumerate}
	Then there is a strip in $\A$ close to (but not accumulating on) $\partial \A$ that contains a set of positive Lebesgue measure of $T$-invariant curves with Diophantine rotation numbers. 
\end{theorema}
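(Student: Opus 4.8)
The plan is to apply Moser's twist theorem (or a suitable quantitative version, e.g. Rüssmann's analytic KAM theorem for area-preserving annulus maps) to the map $T = T_2 \circ T_1$ in a strip $\T \times (\theta_0, \theta_1)$ that stays away from $\partial \A$. The overarching strategy in both cases (a) and (b) is the same: identify a region where $T$ is a small perturbation of an integrable, nondegenerately twisting map, write $T$ in the appropriate symplectic coordinates, check the twist (Kolmogorov nondegeneracy) condition on the unperturbed part, and control the size of the perturbation in a complex-analytic neighbourhood so that the KAM smallness condition is met; the surviving invariant curves with Diophantine rotation numbers then form a set of positive Lebesgue measure.

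\textbf{Case (a): $\ell \ll 1$.} Here the natural idea is to treat $\ell$ itself as the perturbation parameter. When $\ell = 0$ we have $T = T_1$, the classical Birkhoff billiard in a strictly convex analytic domain, which near the boundary $\theta = 0$ (and $\theta = \pi$) is an integrable-like twist map in Lazutkin coordinates: there is a real-analytic change of variables in which $T_1$ becomes $(x, r) \mapsto (x + \omega(r) + O(r^\infty), r)$ with $\omega'(r) \neq 0$. However, $\ell \cot\theta_1$ blows up as $\theta_1 \to 0$, so one cannot take a strip arbitrarily close to the boundary uniformly in $\ell$; instead, for each small $\ell$ one fixes a strip $\theta \in (c_1, c_2)$ with $c_1 = c_1(\ell)$ chosen so that $\ell/\sin^2\theta$ and all needed derivatives are small (e.g. $c_1(\ell) = \ell^{1/3}$), and so that this strip still does not accumulate on $\partial\A$. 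On such a strip $T_2$ is a near-identity analytic twist map ($\partial_\theta(\ell\cot\theta) = -\ell/\sin^2\theta \neq 0$), $T_1$ is an analytic exact symplectic map $C^\omega$-close to a rotation in Lazutkin-type coordinates, and the composition is an exact symplectic analytic perturbation of a nondegenerate twist. One then invokes the twist theorem. The twist may come primarily from $T_2$, so some care is needed to combine the twist of $T_2$ with the (possibly degenerate or boundary-vanishing) twist of $T_1$; verifying the composite nondegeneracy is the first delicate point.

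\textbf{Case (b): $\partial\G$ near-circular.} Here the unperturbed object is the coin over the unit disk. For the circle of radius $R$, the billiard map $T_1$ is the \emph{rigid rotation} $(\f,\theta) \mapsto (\f + 2R\theta, \theta)$ (in arclength/angle coordinates, up to normalization), so $T = T_2\circ T_1$ is exactly $(\f,\theta)\mapsto(\f + 2R\theta + \ell\cot\theta, \theta)$ — an integrable twist map on all of $\A$, with twist $\partial_\theta(2R\theta + \ell\cot\theta) = 2R - \ell/\sin^2\theta$. This vanishes on the curve $\sin^2\theta = \ell/(2R)$, so one must choose the strip to avoid a neighbourhood of that degeneracy curve; on the remaining region (for instance $\theta$ bounded away from $0,\pi$ and from the bad curve) the twist is nonzero and the map is a genuine nondegenerate integrable twist. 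A $C^\omega$-small perturbation of $\partial\G$ produces a $C^\omega$-small (exact symplectic) perturbation of $T_1$, hence of $T$, on a fixed complex strip; applying the analytic twist theorem on that strip yields the positive-measure set of Diophantine invariant curves, none of which accumulate on $\partial\A$.

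\textbf{Main obstacle.} The chief difficulty in both cases is the interaction between the boundary singularity of $T_2$ (the $\cot\theta$ shift is unbounded and has unbounded derivatives as $\theta\to 0,\pi$) and the need to keep the perturbation analytic and small on a complex neighbourhood of the strip: one must simultaneously (i) stay far enough from $\partial\A$ that $\ell\cot\theta$ and its derivatives — and the complexified map — are controlled, (ii) stay far enough from the twist-degeneracy locus (relevant in case (b), and in case (a) from wherever the Lazutkin twist of $T_1$ could cancel that of $T_2$) so that the nondegeneracy constant is bounded below, and (iii) still have a nonempty strip not accumulating on $\partial\A$, with quantitative bounds good enough to beat the KAM threshold. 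Making these three requirements compatible — essentially a careful bookkeeping of the $\theta$-dependent scales, and in case (a) a judicious $\ell$-dependent choice of strip — is the technical heart of the argument; once the strip and coordinates are fixed, the application of the analytic KAM twist theorem is standard.
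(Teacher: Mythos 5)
Your case (b) is correct but follows a genuinely different route from the paper. You perturb off the globally integrable circular coin map $(\f,\t)\mapsto(\f+2\t+\ell\cot\t,\t)$ on a fixed strip bounded away from $\partial\A$ and from the twist-degeneracy curve $\sin^2\t=\ell/2$; since such a strip is a compact subset of the open annulus, a $C^\omega$-small perturbation of $\partial\G$ gives a $C^\omega$-small perturbation of $T_1$, hence of $T$, there, and analyticity, twist and the intersection property (via exact symplecticity, Lemma \ref{lemma_generatingfunctions}) let you apply the twist theorem with an $\e$-independent threshold. The paper instead works in the $\e$-dependent strip $\T\times[\e-c_1\e^2,\e+c_1\e^2]$, rescales $\t=\e-\e^2 y$, and uses the $\ell/\t$ shift of $T_2$ as the integrable twist, the entire billiard contribution being the $O(\e)$ perturbation because $\rho'=O(\e)$ (Theorem \ref{theorem_nckam}); this is what places the curves near $\partial\A$, as in the wording of Theorem \ref{theorem_a}. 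Your curves sit in the interior unless you move your strip to a small fixed height (where the circular coin's twist $2-\ell/\sin^2\t$ is automatically nonzero); that is a harmless adjustment, and your version arguably buys a simpler, non-boundary statement at the price of not exhibiting curves near $\partial\A$.

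Your case (a), however, has a genuine gap. First, the claim that on the strip $\t\in(c_1(\ell),c_2)$ the billiard $T_1$ is $C^\omega$-close to a rotation in Lazutkin-type coordinates is false unless the whole strip is near the boundary: away from $\t=0$ a general analytic convex billiard is not close to any integrable map, so the strip must be thin and $\ell$-dependent at both ends, say $\t\in[c_1,2c_1]$. Second, and more seriously, on such a thin strip the available frequencies vary only by $O(c_1)$, so after normalising the action interval to a fixed one the twist is $O(c_1)$, while the effect of $T_2$ in Lazutkin coordinates is an angle-dependent shift of size $\ell/c_1$ (plus an $O(\ell)$ change of the action); the version of Moser's theorem you would invoke has a smallness threshold that degenerates with the twist and the width of the strip, so it cannot be applied uniformly in $\ell$ without either a rescaling that makes the twist of order one or a quantitative small-twist KAM statement with the exponents checked — and with $c_1=\ell^{1/3}$ this check is exactly the missing step. (Your aside that the twist ``may come primarily from $T_2$'' is also off at that height: there $\partial_\t(\ell\cot\t)=O(\ell^{1/3})$, negligible against the billiard twist $2\rho$.) The paper bypasses all of this with one substitution: it works at height $\t\sim\ell$ and sets $\eta=\ell/\t$ with $\eta$ in a fixed interval $[a,b]$, after which the map reads $\bar\xi=\xi+\eta+O(\ell)$, $\bar\eta=\eta+O(\ell)$ — an integrable twist with twist identically $1$ on an $\ell$-independent annulus and perturbation $O(\ell)$ — so Theorem \ref{theorem_moser} applies verbatim (Theorem \ref{theorem_shkam}). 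Your plan could likely be repaired along these lines, but as written the KAM smallness condition is not verifiable.
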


The formal statements of these results can be found in Theorems \ref{theorem_nckam} and \ref{theorem_shkam} in Section \ref{sec_kam} below. The idea of the proof is very simple: we prove that, in each of these perturbative settings, we can make a coordinate transformation in a strip so that the map takes the form $(x,y) \to (x+y, y) + O(\e)$. We then apply Moser's version of the KAM Theorem. Although Theorem \ref{theorem_a} requires the boundary to be real-analytic, it is likely true for much less regular boundaries. The reason we state real-analyticity as the required regularity is to allow us to apply a straightforward version of the KAM theorem (Moser's original theorem, described in the appendix) in order to simplify the exposition. A version of the KAM theorem with lower regularity requirements likely could be applied, but there may be other more cumbersome conditions to check, which would detract from the simple nature of the proof. 

Theorem \ref{theorem_a} is about \emph{existence} of invariant curves. The following theorem is about \emph{nonexistence} of invariant curves in a neighbourhood of the boundary of the phase space $\A$. 

\begin{theorema}\label{theorem_b}
	Let $\G \subset \R^2$ be a bounded and strictly convex domain with $C^5$ boundary. Suppose moreover that $\G$ is not a disc. Then there exists an $\ell_0=\ell_0(\G) >0$ such that for any $\ell > \ell_0$, there is $\delta >0$ for which no invariant essential curve of the coin mapping $T$ on the coin $\C \left( \G, \ell \right)$ passes through the neighbourhood $\left( \T \times (0, \delta) \right) \cup \left( \T \times (\pi - \delta, \pi) \right)$ of $\partial \A$. \end{theorema}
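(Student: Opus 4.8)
The plan is to localise near the boundary component $\t=0$ (the component $\t=\pi$ being symmetric) and to argue by contradiction: assume that for arbitrarily small $\delta$ there is a $T$-invariant essential curve $\Gamma$ meeting $\T\times(0,\delta)$, and derive, when $\ell$ is large, a contradiction from Birkhoff's theorem together with a quantitative estimate on how much $\Gamma$ must bend.

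First I would record the structure of $T$ near $\partial\A$. Since $T=T_2\circ T_1$ with $T_2$ the shift $(\f,\t)\mapsto(\f+\ell\cot\t,\t)$, in the symplectic coordinate $J=-\cos\t$ the map $T_2$ is the integrable twist $\f\mapsto\f+\omega_\ell(J)$, $J\mapsto J$, whose twist $\omega_\ell'(J)=-\ell/\sin^3\t$ is of order $\ell/\t^3$ near the boundary, while $T_1$ is the classical billiard, which in the same region is close to the identity with the size and derivatives of the deviation controlled by the curvature $\k$ of $\partial\G$. In particular the composition $T$ is, on a strip $\T\times(0,\delta)$, an exact symplectic twist map (the large twist of $T_2$ dominating), so Birkhoff's theorem applies: any invariant essential curve $\Gamma$ is a Lipschitz graph $\t=\eta(\f)$, the induced circle map is order preserving and has a well defined rotation number $\nu(\Gamma)$, and the Lipschitz constant of $\eta$ is bounded a priori in terms of the twist. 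Because $\f_{n+1}-\f_n\ge\ell\cot\t_{n+1}-C(\G)\ge\ell\cot\delta-C(\G)$ along such a curve, $\nu(\Gamma)$ is forced to be large when $\ell$ is large.

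The heart of the matter is the quantitative obstruction, and this is where the hypothesis that $\G$ is not a disc enters. Near $\partial\A$ the billiard $T_1$ has, in suitable (Lazutkin-type) coordinates, invariant curves to high order, but these near-boundary caustic foliations of $T_1$ coincide with the horizontal foliation $\{\t=\mathrm{const}\}$ only when $\k$ is constant, i.e.\ only for a disc. Hence the horizontal shift $T_2$ does not even approximately preserve the natural near-boundary structure of $T_1$: writing $I$ for the local caustic parameter of $T_1$, one step of $T$ changes $I$ by $I(\f+\ell\cot\t,\t)-I(\f,\t)$ plus a billiard correction, a `kick' whose oscillation in $\f$ does not vanish and which, for $\ell$ large, is sampled over essentially the full range of $I$ along each level $\{\t=\mathrm{const}\}$. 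Combining this kick with the large twist $\sim\ell/\t^3$ of $T_2$ gives, after the rescaling that normalises the twist to $O(1)$, a map of standard-map type whose nonlinearity parameter is of size $c(\G)\,\ell$ with $c(\G)>0$ (the positivity of $c(\G)$ coming solely from $\k$ being non-constant); one then checks that a would-be invariant graph would be forced to have a vertical oscillation, equivalently a Lipschitz constant, exceeding the a priori Birkhoff bound for a twist map of this twist — a contradiction once $\ell>\ell_0(\G)$. Choosing $\delta$ small after $\ell$, so that $\ell\cot\delta$ is large, makes all the estimates uniform on the strip, and the resulting $\delta$ yields the claimed neighbourhood of $\partial\A$.

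I expect the main obstacle to be making the standard-map-type reduction and the ensuing contradiction fully rigorous: one must control $T_1$ near $\partial\A$ to sufficiently high order (hence the $C^5$ hypothesis), track how the Lazutkin-type change of variables distorts $T_2$, and extract an $\ell$-independent lower bound $c(\G)>0$ on the effective nonlinearity from the sole assumption that $\k$ is non-constant. A perhaps cleaner alternative route to the same contradiction is variational: use the generating function $G(s,s')=\mathrm{crit}_{\sigma}\big(|\g(s)-\g(\sigma)|+\sqrt{\ell^{2}+(s'-\sigma)^{2}}\big)$ of the coin map (a broken geodesic: a chord inside $\G$ followed by a helical geodesic of the cylinder) together with Mather's criterion for nonexistence of invariant curves, exhibiting a competitor configuration whose Percival action, for $\ell$ large and $\G$ not a disc, is strictly below that of any rotationally ordered configuration with the relevant rotation number.
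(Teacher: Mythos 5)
Your overall strategy---reduce $T$ near $\t=0$ to a standard-map-like form whose effective parameter is of order $\ell$, and play this off against Birkhoff/Herman Lipschitz bounds for invariant graphs, with a Mather-type variational argument via the coin generating function as an alternative---is indeed the same pair of routes the paper follows. The problem is that the decisive step is asserted rather than proved, and as stated it is not clear it can be made to work. The paper's mechanism is pinned to a specific quantity: from the boundary expansion $\bar\f=\f+\ell/\bar\t+O(\t)$, $\bar\t=\t-\frac23\rho'(\f)\t^2+O(\t^3)$ and the rescaling $\t=\e-\e^2y$ one gets a kick $\frac23\rho'(x)$, and the threshold is the explicit $\ell_0=-3/\min_{\f}\rho''(\f)$; i.e.\ it is the second derivative of the radius of curvature (negative somewhere by periodicity, precisely because $\G$ is not a disc) that controls everything. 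The contradiction is then obtained either by Mather's glancing-billiards argument applied to $H(x_0,x_1,x_2)=\partial_2h(x_0,x_1)+\partial_1h(x_1,x_2)$ at an $x_1$ minimizing $\rho''$ (the implicit function theorem would force $T|_C$ to reverse orientation on the hypothetical curve), or by showing that the slope bound which orientation preservation of $T$ imposes at points where $\rho''$ is close to its minimum is incompatible with orientation preservation of $T^{-1}$ once $2+\frac23\ell\min_{x}\rho''(x)<0$. Your proposal replaces this by ``one then checks that a would-be invariant graph would be forced to have \dots a Lipschitz constant exceeding the a priori Birkhoff bound'', with the positivity of your effective constant $c(\G)$ attributed only to non-constancy of the curvature; but $c(\G)$ is never defined or bounded below, and the naive comparison you gesture at does not close: with twist of order $\ell/\t^2$ the Birkhoff/Herman bound allows slopes of order $\bigl(\tfrac1\ell+\tfrac23\max_{x}\rho''(x)\bigr)\t^2$, while the forced $\t$-oscillation per step is of order $|\rho'|\,\t^2$ over an essentially bounded change of abscissa within the band containing the curve; comparing these two only produces the homotopically trivial ``island'' regions of Theorem~\ref{theorem_birkhoffzones}, which exist for \emph{every} $\ell>0$, and so cannot by itself yield nonexistence for $\ell>\ell_0$. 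The nonexistence statement genuinely requires the finer orientation argument in which $\rho''$, not merely $\rho'$ or the non-constancy of $\kappa$, appears.

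Two further points. First, your estimates are set up ``on a strip $\T\times(0,\delta)$'' chosen after $\ell$, but to normalize uniformly you need to know a priori that any invariant essential curve is confined to a strip of width $O(\t_0^2)$ about its mean height $\t_0$ (the paper's Lemma~\ref{lemma_herman}, which itself rests on the vertically-mapped graphs of Lemma~\ref{grafici}); without this confinement the frequency $\omega=\ell/\e$ and the $O(\e)$ remainders of your rescaled map are not under control on the curve. Second, your variational alternative stops at ``exhibiting a competitor configuration'' with smaller Percival action; no such configuration is constructed, and the paper's variational proof does not proceed this way---it runs the sign/implicit-function argument above on the generating function of Lemma~\ref{lemma_generatingfunctions}, again pivoted on $\rho''(x_1)<0$ and $\ell>-3/\rho''(x_1)$. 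So the proposal identifies the right framework, but the key quantitative idea that produces the contradiction and determines $\ell_0(\G)$ is missing.
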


We give two proofs of Theorem \ref{theorem_b}. The first one is variational, and makes use of a particular coordinate transformation in strips accumulating on $\partial \A$. In these coordinates, the coin mapping $T$ is a small perturbation of a generalised standard map. We write a generating function for this map, and combine this with a technique introduced by Mather to prove the absence of invariant curves \cite{mather1982glancing}. In the second proof it is shown that, when $\ell>\ell_0$, if essential invariant curves were to exist near the boundary, their Lipschitz constants would not respect the bounds found by Herman in \cite{herman1983courbes}. A future goal is to establish similar results in the case of more general pensive billiards (i.e. with different delay functions \cite{drivas2024pensive}), which is why we have been considering different approaches to the problem.

Naturally, we would like to provide a complement to Theorem \ref{theorem_b} for the case where the height of the coin is small. As we will discuss in more detail later (see the discussion on Bialy's second question below) this is quite difficult, as it is closely related to proving nonintegrability of a generalised standard map for small values of the parameter, i.e. a notoriously difficult problem that is still open in its full generality.

Upon consideration of Theorems \ref{theorem_a} and \ref{theorem_b}, one may wonder if the coin map on $\C(\G, \ell)$ admits any essential invariant curve at all when the table $\Gamma$ is far from circular and $\ell$ is sufficiently large. Indeed, for certain values of $\t_* \in (0,2\pi)$, Gutkin constructed in \cite{gutkin2012capillary} noncircular analytic boundaries $\G_{*}$ for which the corresponding billiard map $T_1$ has an invariant curve $C_{*} = \{ (\f, \t) : \t = \t_* \}$ (i.e. a circular caustic). Since the map $T_2$ does not alter $\t$, the invariant curve $C_{*}$ remains invariant for the coin map $T$ on $\C \left( \G_{*}, \ell \right)$ for \emph{any} value of $\ell >0$.

Finally, the third main result of this work is the following.

\begin{theorema}\label{theorem_c} 
	Let $\G \subset \R^2$ be any bounded strictly convex domain with $C^5$ boundary. For any $\ell>0$, if the map $T$ associated to the coin billiard $\mathcal C(\G,\ell)$ admits a family of essential invariant curves whose union is
	the annulus $\A$, then $\G$ is a disc.
\end{theorema}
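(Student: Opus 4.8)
The plan is to argue by contradiction: suppose $\mathcal C(\G,\ell)$ is foliated by essential invariant curves of $T$ but $\G$ is not a disc, and derive a contradiction with Theorem \ref{theorem_b}. The key observation is a scaling/conjugacy relation inside the coin family. Recall $T = T_2 \circ T_1$, where $T_1$ is the classical Birkhoff billiard map of $\G$ (independent of $\ell$) and $T_2 : (\f,\t) \mapsto (\f + \ell\cot\t, \t)$. If we rescale the height, replacing $\ell$ by $\l\ell$ for some $\l > 0$, we would like to relate the coin map $T^{(\l\ell)}$ on $\mathcal C(\G,\l\ell)$ to $T^{(\ell)}$. Since $T_2^{(\l\ell)}$ is just $\l$ times the $\f$-shift of $T_2^{(\ell)}$ in a suitable sense, I would first look for a diffeomorphism of $\A$ (acting on the $\t$-variable, perhaps via $\cot\t \mapsto \l\cot\t$, i.e. a reparametrisation of the angle together with a possible rescaling of $\f$) that conjugates the two maps, or at least transports essential invariant curves of one to essential invariant curves of the other. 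If the billiard $T_1$ were trivial this is immediate; the subtlety is that $T_1$ does \emph{not} transform nicely under such a reparametrisation, so a genuine conjugacy is too much to hope for in general.

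The correct route, then, is not a global conjugacy but a statement near $\partial\A$: I claim that \emph{if} $\A$ is foliated by essential invariant curves for some $\ell_1 > 0$, then there is a neighbourhood of $\partial\A$ foliated by essential invariant curves for \emph{every} $\ell > 0$ — in particular for $\ell > \ell_0(\G)$, contradicting Theorem \ref{theorem_b} (which requires $\G$ noncircular). To make this work I would pass to the strip coordinates used in the proof of Theorem \ref{theorem_b}, where $T$ near $\partial\A$ is a small perturbation of a generalised standard map depending on $\ell$ only through a large parameter; an invariant foliation is a very rigid object, and existence of a full invariant foliation for one parameter value should force, by a continuation/monotonicity argument in $\ell$, invariant curves to persist for all larger $\ell$. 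Concretely, I would use that the rotation number on these curves, as a function of the "action" coordinate, is continuous and (because $T$ is a monotone twist map near the boundary) strictly monotone, so the foliation by invariant curves of all irrational rotation numbers in an interval cannot simply disappear as $\ell$ increases without violating Birkhoff's theorem on the structure of the region between invariant curves — unless the curves move off to the boundary, which is exactly the phenomenon Theorem \ref{theorem_b} forbids.

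An alternative and possibly cleaner argument avoids continuation in $\ell$ entirely. Suppose $\A = \bigcup_{\alpha} C_\alpha$ with each $C_\alpha$ an essential invariant curve. Each $C_\alpha$ is then a Lipschitz graph $\t = u_\alpha(\f)$ by Birkhoff's theorem (the coin map is an exact symplectic twist map on each half of $\A$, with the twist condition coming from the $\ell\cot\t$ term, which dominates near $\partial\A$). Because $T_2$ preserves $\t$, the curve $C_\alpha$ is $T$-invariant if and only if $T_1(C_\alpha)$ is invariant under the $\f$-shift by $\ell\cot\t$ evaluated along the curve; combined with $T_1$-nonintegrability for noncircular $\G$ this should be impossible for a full-measure set of $\alpha$. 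More precisely, I would show that a foliation by essential invariant curves of $T$ forces $T_1$ itself to admit a foliation of a neighbourhood of $\partial\A$ by invariant curves — the circular caustics $\t = \t_*$ — and then invoke the classical fact (a consequence of the Lazutkin coordinates / string construction, or of Mather's and Bialy's rigidity results for billiards) that a billiard table all of whose near-boundary caustics are circles, equivalently whose billiard map near $\partial\A$ is foliated, must be a disc.

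The main obstacle is precisely this last reduction: extracting from "the coin map $T = T_2 \circ T_1$ is foliated near $\partial\A$" the conclusion "$T_1$ is foliated near $\partial\A$," since a priori the invariant curves of $T$ need not be invariant curves of $T_1$ or of $T_2$ separately. I expect one resolves this by a careful analysis of how $T_2$ shears curves: an essential $T$-invariant graph $\t = u(\f)$ satisfies a functional equation coupling $u$, the billiard map $T_1$, and the shift amount $\ell\cot u$, and differentiating this relation (using $C^5$ regularity) along the foliation parameter $\alpha$ should pin down $u$ and hence the billiard dynamics. An attractive shortcut, which I would try first, is to simply note that if $\A$ is foliated for $\ell = \ell_1$ then in particular invariant curves pass arbitrarily close to $\partial\A$; rescale the configuration space vertically by $t = \ell_0/\ell_1 > 1$ — but as noted this does not act cleanly on $T_1$, so the honest proof likely proceeds through the generating-function / Mather machinery already developed for Theorem \ref{theorem_b}, applied now not to exhibit one destroyed curve but to show that an entire foliation is incompatible with the $\ell$-dependence of the generating function unless the $\t$-independent part (the billiard) is itself circular.
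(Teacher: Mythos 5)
The central gap is your continuation-in-$\ell$ step. You claim that a foliation of $\A$ by essential invariant curves for one height $\ell_1$ forces invariant curves near $\partial\A$ for \emph{every} height, in particular for $\ell>\ell_0(\G)$, so that Theorem \ref{theorem_b} gives the contradiction. Nothing supports this: for fixed $\G$, different heights are genuinely different maps, and the discrepancy involves a shift $(\ell-\ell_1)\cot\t$ which is \emph{unbounded} near $\partial\A$, so no perturbative persistence is available; monotonicity of the rotation number in the action variable and Birkhoff's theory of instability regions are statements about a single twist map, not about a one-parameter family, and they cannot transport invariant curves from $\ell_1$ to a larger $\ell$. In fact, Theorem \ref{theorem_a}\eqref{item_kama} together with Theorem \ref{theorem_b} shows that near-boundary invariant curves of the coin map \emph{do} get destroyed as $\ell$ grows, so any such continuation principle is false as a general mechanism. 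Your alternative route --- deducing that $T_1$ itself is foliated near $\partial\A$ by circular caustics and then invoking classical rigidity --- founders on exactly the obstacle you yourself name: an essential invariant curve of $T=T_2\circ T_1$ need not be invariant under $T_1$ or $T_2$ separately, and the ``functional equation'' analysis you defer is precisely the missing proof. Note finally that any reduction to Theorem \ref{theorem_b} could at best treat $\ell>\ell_0(\G)$, whereas the statement to be proved holds for every $\ell>0$.

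For comparison, the paper's proof is direct and works at the given $\ell$: the unbounded twist $\ell/\t$ produces a sequence of vertically-mapped graphs $\mathscr G_m$ accumulating on $\partial\A$, on which the lift advances $\f$ by exactly $2m\pi$ (Lemma \ref{grafici}); Herman-type estimates force any essential invariant curve near the boundary to be a Lipschitz graph with Lipschitz constant $O(\t_0^2)$ (Lemmas \ref{lemma_herman} and \ref{lemma_Lipschitz}); and at a point of $\mathscr G_m$ where $\rho'\neq 0$ --- such points exist for all large $m$ precisely because $\G$ is not a disc --- the map moves $\t$ by about $\tfrac23\rho'\,\t_m^2\neq 0$ while moving $\f$ by $0$ modulo $2\pi$, so an invariant graph passing near such a point would need arbitrarily large slope, a contradiction (Lemma \ref{lemma_vertically_mapped_points}, Theorem \ref{theorem_birkhoffzones}). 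This produces open regions free of essential invariant curves accumulating on $\partial\A$ for every $\ell>0$, which is what rules out the foliation. As written, neither of your two routes closes its key step, so the proposal does not yet constitute a proof.
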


This result is an extension to coin billiards of a Theorem of Bialy about classical billiards. See the original paper \cite{bialy1993circular} for a variational proof, \cite{wojtkowski1994circular} for a geometric approach, and \cite{marcoentropy} for its relation to the Birkhoff conjecture and to questions related to entropy. The proof of Theorem \ref{theorem_c} is essentially different from those that are found in the context of classical billiards, which rely either on variational or on geometric methods. As we will see in Section \ref{sec_birkhoff} we use in an essential way the strong twist property of $T$ near the boundary, and the bounds on the Lipschitz constants that any hypothetical essential invariant curve of $T$ should satisfy, according to Herman \cite{herman1983courbes}. In particular, it is shown that, for noncircular tables, Herman's bounds are incompatible with the existence of a foliation of the phase space near the boundary by essential invariant curves, due to the strong twist property of the coin map in that region.  Incidentally, the same kind of arguments are used in the second proof of Theorem \ref{theorem_b}. We also observe that another reason why Theorem \ref{theorem_c} is interesting is that it holds for any height of the coin billiard. As we will discuss later in the paper, we conjecture that some aspects of its proof might be used in order to prove nonintegrability for small heights of the coin.

	The proof of Theorem \ref{theorem_c} presents the following trichotomy: either $\G$ is a disc; or the coin map $T$ in $\C(\G,\ell)$ is not integrable; or else $\G$ is not a disc, but the coin map $T$ in $\C(\G,\ell)$ is integrable, and there are chains of (likely elliptic) islands, surrounded by separatrices of integer rotation number, that accumulate on $\partial \A$. We expect the latter configuration to be very fragile, as a generic perturbation should break the separatrices of integer rotation number (see e.g. \cite{ramirezrosbreakinvariant}). This could be used in future works to prove  nonintegrability of the coin billiard when the height is small. 

In Theorems \ref{theorem_b} and \ref{theorem_c} we require that the boundary be $C^5$, for the following reason. In the proofs of these theorems, we first perform an expansion of the coin map $T(\f,\t)=(\bar \f, \bar \t)$ in powers of $\t$ (see equation \eqref{eq_coinexp} below). The term of order $\t^2$ in the expansion of $\bar \t$ depends on the first derivative $\rho'$ of the radius of curvature of $\partial \G$ with respect to $\f$ (i.e. the third derivative of $\partial \G$), and similarly the term of order $\t^3$ depends on $\rho'$ and on $\rho''$. We then need to work with the $\f$ derivative of $\bar \t$; in particular, we work with its Taylor polynomial up to terms of order $\t^2$, and we need that the higher order terms are well-behaved in the limit as $\t \to 0$. This holds when $\partial \G$ is $C^5$.


In what follows we describe some important consequences of Theorems \ref{theorem_b} and \ref{theorem_c}. The statement below is a consequence of results by Mather \cite{matheralphaomega} and Le Calvez \cite{lecalvez1987proprietes}. 

\begin{corollaria}\label{cor_matherlecalvez}
	Under the hypotheses of Theorem \ref{theorem_b}, there exists an orbit whose $\alpha$-limit set is contained in $\A$, and whose $\omega$-limit set is contained in $\partial \A$.
\end{corollaria}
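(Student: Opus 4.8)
The plan is to realise the boundary strip $\T\times(0,\delta)$ provided by Theorem~\ref{theorem_b} as the lowest layer of a Birkhoff region of instability abutting the end $\{\t=0\}$ of $\A$, and then to apply the variational construction of connecting orbits of Mather~\cite{matheralphaomega} and Le~Calvez~\cite{lecalvez1987proprietes} to join an Aubry--Mather set of finite rotation number (which lies in the interior of $\A$) to that end. The orbit so produced has $\alpha$-limit set in $\A$ and $\omega$-limit set on $\partial\A$, which is the assertion.

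First I would record the twist structure of $T$ near $\partial\A$. From $\bar\f=\f_1+\ell\cot\t_1$ one gets $\partial_\t\bar\f=\partial_\t\f_1-\ell\,\csc^2\t_1\,\partial_\t\t_1$, and since the classical-billiard derivatives $\partial_\t\f_1,\partial_\t\t_1$ are bounded on $\A$ with $\partial_\t\t_1\to1$ at the ends, there is $\t^*>0$ such that on the collar $\A^*:=\T\times(0,\t^*)$ the map $T$ is an exact symplectic monotone twist map whose twist is of order $\ell/\sin^2\t$, hence unbounded towards the end, the rotation number tending to $+\infty$ at the end $\{\t=0\}$ and to $-\infty$ at the end $\{\t=\pi\}$. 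By Birkhoff's theorem any essential invariant curve of $T$ meeting $\A^*$ is a Lipschitz graph, such curves are totally ordered, and the closure of their union is closed; by Theorem~\ref{theorem_b} none of them meets $\T\times(0,\delta)$, and we may take $\delta\le\t^*$.

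Next I would isolate the region of instability inside $\A^*$. If $T$ has an essential invariant curve meeting $\A^*$, let $C_0$ be the lowest one --- it exists (the infimum of the invariant-curve graphs is again invariant, and by Theorem~\ref{theorem_b} stays above height $\delta$) and lies in $\T\times[\delta,\t^*)$; then the open region $U$ between $\{\t=0\}$ and $C_0$ is a Birkhoff region of instability contained in $\A^*$, containing $\T\times(0,\delta)$, carrying no essential invariant curve, with $\{\t=0\}$ as one bounding circle. (If $T$ has no essential invariant curve meeting $\A^*$, one works instead with $\A^*$ itself, treating $\{\t=0\}$ as an invariant circle ``at infinity''.) The rotation interval of $U$ is $[\omega_-,+\infty)$ with $\omega_-\in[-\infty,+\infty)$, the value $+\infty$ labelling the end $\{\t=0\}$. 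Fix a finite irrational $\omega_0$ in its interior; since $\omega_0$ is finite and the rotation number diverges at both ends of $\A$, the Aubry--Mather set $M_{\omega_0}\subset\overline U$ of rotation number $\omega_0$ lies in a compact subannulus of $\A$. The Mather--Le Calvez construction of connecting orbits in a region of instability then yields an orbit $\{T^np\}_{n\in\Z}$ in $U$ that is backward asymptotic to $M_{\omega_0}$ and whose forward iterates have $\t$-coordinate tending to $0$. Hence the $\alpha$-limit set of $p$ is contained in $M_{\omega_0}\subseteq\A$, while its $\omega$-limit set is contained in $\T\times\{0\}\subseteq\partial\A$.

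The main obstacle is not conceptual but is the adaptation of the Mather--Le Calvez theory --- classically stated for monotone twist maps of the closed annulus --- to the present setting, in which $T$ is a monotone twist map only on a collar of $\partial\A$ and, moreover, does not extend continuously to $\partial\A$ because the shift $\ell\cot\t$ is unbounded there. One must therefore use the forms of those results valid for regions of instability one of whose bounding ``circles'' is a boundary of the annulus --- here an invariant circle ``at infinity'' of rotation number $+\infty$, approached precisely by the orbits with $\t_n\to0$ --- and one must keep the entire argument (both the region $U$ and the connecting orbit drawn from it) inside the collar where the strong twist of $T$ and Theorem~\ref{theorem_b} are available; this is immediate when $T$ has an essential invariant curve inside that collar, and requires the ``circle at infinity'' version of the theory otherwise. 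With this translation carried out, the corollary is immediate.
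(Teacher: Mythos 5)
Your proposal follows essentially the same route as the paper: the paper obtains this corollary directly from the Mather and Le~Calvez connecting-orbit results for regions of instability, using the curve-free strip near $\partial \A$ supplied by Theorem~\ref{theorem_b}, and gives no further detail beyond that citation. Your elaboration (twist collar, lowest essential invariant graph, an Aubry--Mather set of finite rotation number as the $\alpha$-limit, drift towards the end $\{\t=0\}$ as the $\omega$-limit) matches that intended argument, and the caveats you flag (twist only in a collar, no continuous extension of $T$ to $\partial\A$, and the non-invariance of the collar when no invariant curve meets it) are precisely the points the paper delegates to the cited references.
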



Moreover, other classical results about twist maps of the annulus due to Angenent,  Boyland and Hall, and Katok ensure the following \cite{angenententropy,boylandhallentropy,katokhorseshoe}. 
\begin{corollaria}\label{cor_angenentkatok}
	Under the hypotheses of Theorem \ref{theorem_b} the coin map has a horseshoe, and has positive topological entropy. 
\end{corollaria}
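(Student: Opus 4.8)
\textbf{Proof proposal for Corollary \ref{cor_angenentkatok}.}

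The plan is to deduce the existence of a horseshoe and positive topological entropy directly from Theorem \ref{theorem_b}, by invoking the cited results of Angenent, Boyland--Hall, and Katok as black boxes and checking only their hypotheses. The essential point is that the coin map $T$ is a $C^1$ (indeed real-analytic, or at least sufficiently smooth) area-preserving monotone twist map of the annulus $\A = \T \times (0,\pi)$, and Theorem \ref{theorem_b} supplies precisely the dynamical input these theorems require: the absence of invariant essential curves in a neighbourhood of the boundary.

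First I would fix $\G$ and $\ell$ as in the hypotheses of Theorem \ref{theorem_b}, so that $\G$ is not a disc and $\ell > \ell_0(\G)$. Theorem \ref{theorem_b} then provides a $\delta > 0$ such that no invariant essential curve of $T$ meets the neighbourhood $U = \left( \T \times (0,\delta) \right) \cup \left( \T \times (\pi - \delta, \pi) \right)$ of $\partial \A$. The key observation is that the obstruction to invariant curves provided by Theorem \ref{theorem_b} occurs on \emph{both} ends of the annulus, so one can select two disjoint essential circles $c_0 = \T \times \{ \t_0 \}$ and $c_1 = \T \times \{ \t_1 \}$ with $0 < \t_0 < \delta$ and $\pi - \delta < \t_1 < \pi$, and restrict attention to the compact invariant-in-the-relevant-sense subannulus bounded by suitable curves; more precisely, the whole region near $\partial \A$ contains no invariant essential curve, which is exactly the situation in which Birkhoff's theory predicts the existence of orbits wandering across the region. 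I would then appeal to the Birkhoff-type mechanism made quantitative by Mather and Le Calvez (already used in Corollary \ref{cor_matherlecalvez}): the absence of an invariant essential curve separating the two boundary components forces the coexistence of orbits that cross the region in both radial directions, and in particular forces a region swept out by orbits with a nondegenerate spread of rotation numbers.

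Next I would invoke the results on topological entropy for twist maps. By the theorem of Angenent and of Boyland--Hall \cite{angenententropy,boylandhallentropy}, a monotone twist map of the annulus that possesses no invariant essential curve in an open subannulus has positive topological entropy; equivalently, the failure of the region to be foliated by invariant curves produces Birkhoff instability regions, and on such regions the twist map carries positive entropy. Since Theorem \ref{theorem_b} guarantees exactly this failure throughout the neighbourhood $U$ of $\partial \A$, the coin map $T$ has positive topological entropy. To upgrade positive entropy to an actual horseshoe I would apply Katok's theorem \cite{katokhorseshoe}: for $C^{1+\alpha}$ surface diffeomorphisms, positive topological entropy implies the existence of a transverse homoclinic point and hence, by the Smale--Birkhoff theorem, a horseshoe; since $T$ is area-preserving and real-analytic this regularity requirement is comfortably satisfied. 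Combining these two steps yields both conclusions of the corollary.

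The main obstacle, and the step requiring the most care, is verifying that the hypotheses of the Angenent/Boyland--Hall entropy criterion are met in exactly the form stated there, rather than merely in spirit. These theorems are typically phrased either for twist maps of a closed annulus with both boundary circles invariant, or in terms of Birkhoff instability regions bounded by Birkhoff invariant curves; one must check that the noncompactness of $\A$ and the unboundedness of the twist $\ell \cot \t_1$ near $\partial \A$ do not interfere. I expect this to be handled by passing to a compact subannulus $\T \times [\t_0, \t_1]$ whose boundary circles lie in the region with no invariant curves, and observing that, because \emph{no} invariant essential curve meets $U$, this subannulus contains no invariant essential curve separating its two boundary components, so it is a genuine Birkhoff instability region to which the entropy criterion applies. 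The only remaining technical point is confirming that $T$ restricted to this subannulus is a bona fide monotone twist map with the twist condition of the correct sign, which follows from the strong twist property of the coin map near the boundary already exploited in the proofs of Theorems \ref{theorem_b} and \ref{theorem_c}.
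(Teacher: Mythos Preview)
Your overall strategy matches the paper's: the corollary is stated there without proof, simply as a consequence of the cited results of Angenent, Boyland--Hall, and Katok, and your plan to verify their hypotheses via Theorem \ref{theorem_b} is the right one.

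However, your localization step contains a genuine error. You choose $\t_0 \in (0,\delta)$ and $\t_1 \in (\pi-\delta,\pi)$ and then assert that the subannulus $\T \times [\t_0,\t_1]$ ``contains no invariant essential curve separating its two boundary components.'' Theorem \ref{theorem_b} says nothing of the sort: it only rules out invariant curves in the neighbourhood $U$ of $\partial \A$, and your large subannulus contains the entire middle region $\T \times [\delta,\pi-\delta]$, where invariant curves may well exist (e.g.\ the Gutkin curves mentioned in the introduction, or KAM curves for particular tables). Worse, the coin map need not be a monotone twist map on this full subannulus: as the paper notes explicitly in Section \ref{sec_coin}, the twist condition can fail on curves in the interior of $\A$ when $\ell$ is not large enough, so your final sentence about the twist condition ``following from the strong twist property near the boundary'' is not justified on $\T\times[\t_0,\t_1]$.

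The fix is to work on one side only. Near $\t=0$ the map is a genuine (negative) twist map, and Theorem \ref{theorem_b} gives a strip $\T\times(0,\delta)$ free of invariant essential curves. Either there is a lowest invariant essential curve $C_*$ above $\t=\delta$, in which case the open region between $\partial\A$ and $C_*$ is a bona fide Birkhoff zone of instability for a twist map, or there is none and the whole annulus is such a zone. In either case the Angenent / Boyland--Hall entropy criterion applies to that instability region, and then Katok's theorem for $C^{1+\alpha}$ surface diffeomorphisms upgrades positive topological entropy to a horseshoe, exactly as you outlined.
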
 

As a direct consequence of Corollaries \ref{cor_matherlecalvez} and \ref{cor_angenentkatok}, we obtain the following general nonintegrability result for noncircular coins, whenever the height of the coin is sufficiently large. 

\begin{corollaria}
	Let $\G \subset \R^2$ be a bounded and strictly convex domain with $C^5$ boundary. Suppose $\partial \G$ is noncircular. Then for $\ell>\ell_0$, with $\ell_0$ as in Theorem \ref{theorem_b}, the coin mapping $T$ associated to the coin $\mathcal C(\G,\ell)$ is nonintegrable.
\end{corollaria}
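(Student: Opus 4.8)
The plan is to obtain this as an immediate consequence of the two preceding corollaries, once the notion of integrability is pinned down. I would take integrability of the twist map $T$ to mean, as in the setting of Theorem~\ref{theorem_c}, that a neighbourhood of $\partial \A$ (indeed all of $\A$) is foliated by essential invariant curves; or, more robustly, that $T$ has zero topological entropy and admits no orbit connecting the interior of $\A$ to $\partial \A$. Under either reading the conclusion follows by contradiction from Theorem~\ref{theorem_b} and Corollaries~\ref{cor_matherlecalvez} and~\ref{cor_angenentkatok}.

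First I would check that the hypotheses line up: $\G$ is bounded, strictly convex, $C^5$ and noncircular, and $\ell > \ell_0(\G)$, so the hypotheses of Theorem~\ref{theorem_b} hold verbatim, and hence so do those of the two corollaries. Theorem~\ref{theorem_b} then gives a $\delta > 0$ such that no essential invariant curve of $T$ meets $\left( \T \times (0,\delta) \right) \cup \left( \T \times (\pi - \delta, \pi) \right)$. In particular $\A$ is not foliated by essential invariant curves, so $T$ fails to be integrable in the sense of Theorem~\ref{theorem_c}; this already settles the statement if that is the notion of integrability one adopts.

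To get the stronger dynamical conclusion I would invoke Corollary~\ref{cor_angenentkatok}: under the same hypotheses $T$ has a horseshoe and positive topological entropy. Since any reasonable notion of integrable map (e.g.\ one smoothly conjugate to an integrable twist, or more generally one preserving a continuous foliation of $\A$ by essential invariant curves on which the dynamics is conjugate to a rotation) forces zero topological entropy, the existence of a horseshoe is incompatible with integrability. As an alternative argument one may instead use Corollary~\ref{cor_matherlecalvez}, which produces an orbit whose $\alpha$-limit set lies in the open annulus $\A$ and whose $\omega$-limit set lies on $\partial \A$: such an orbit must cross every essential curve separating a neighbourhood of $\partial \A$ from the interior, which is impossible if those curves are invariant. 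Either way $T$ cannot be integrable.

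The only real content at this stage is bookkeeping: confirming that the hypotheses of Theorem~\ref{theorem_b} (hence of both corollaries) coincide exactly with those in the statement, and stating precisely which notion of nonintegrability is being asserted. The genuine difficulty --- nonexistence of invariant curves near $\partial \A$ and the consequent horseshoe --- has already been absorbed into Theorem~\ref{theorem_b} and Corollary~\ref{cor_angenentkatok}, so I expect no further obstacle here.
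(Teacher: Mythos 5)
Your proposal is correct and follows exactly the paper's route: the paper states this corollary as a direct consequence of Corollaries \ref{cor_matherlecalvez} and \ref{cor_angenentkatok} (themselves consequences of Theorem \ref{theorem_b}), which is precisely the deduction you carry out, with the added (and reasonable) care of pinning down the notion of integrability via absence of an invariant foliation, positive entropy, or the drifting orbit.
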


In addition to these analytical theorems, we have performed some numerical experiments with elliptical coin billiards. These simulations are described in Section \ref{sec_num}. The phase portraits that we provide clearly illustrate the presence and absence of invariant curves described by Theorems \ref{theorem_a}, \ref{theorem_b}, and \ref{theorem_c} as we vary parameters. 

\subsection*{Responses to Bialy's Questions and Comparison with Classical Billiards}

We now describe the significance of Theorems \ref{theorem_a}, \ref{theorem_b}, and \ref{theorem_c} in the context of Questions \ref{question1}, \ref{question2}, and \ref{question3}. Moreover, we compare the dynamical behaviour of the map on the coin $\C \left( \G, \ell \right)$ with the case when $\ell=0$: the classical billiard. 

\begin{enumerate}
	\item
	\textbf{Invariant Curves:} The main results of this paper regard existence and nonexistence of invariant curves of $T$. In particular, we have established the existence of some invariant curves in two perturbative settings, as well as the nonexistence of invariant curves for any noncircular coin billiard, when the height $\ell$ of the coin is sufficiently large. This provides a partial answer to Bialy's first question. As mentioned previously, in addition to the KAM curves we construct, there are also the special cases of Gutkin tables in which $T_1$-invariant curves of constant height correspond to $T$--invariant curves of constant height \cite{gutkin2012capillary}. 
	
	The nonexistence of invariant curves near the boundary for $\ell \gg 0$ in the noncircular case is in contrast with the situation in classical billiards. Indeed, a celebrated result of Lazutkin implies that classical billiards (sufficiently smooth, planar, strictly convex) \emph{always} have a family of invariant curves accumulating on the boundary \cite{lazutkin1973existence}. In fact, in case \eqref{item_kama} of Theorem \ref{theorem_a}, it can be seen that the strip of the phase space containing $T$-invariant curves tends towards $\partial \A$ as $\ell \to 0$ (see Theorem \ref{theorem_shkam}). Therefore in this sense, we see that the limiting behaviour of the classical billiard can be observed in the coin billiard as we flatten the coin $\C \left( \G, \ell \right)$. 
	\item
	\textbf{Integrability:} We point out that if $\mathbb D$ is a disc then the coin billiard on $\C(\mathbb D, \ell)$ is integrable. Indeed, the angle of reflection $\t$ is a constant of motion. On one hand, Theorem \ref{theorem_b} implies that if $\G$ is \emph{any} noncircular billiard, and $\ell > \ell_0(\G)$ then the coin billiard on $\C(\G, \ell)$ is not integrable. On the other hand, Theorem \ref{theorem_c} states that for any height of the coin the \emph{only} billiard whose phase space is completely foliated by essential invariant curves is $\C(\mathbb D,\ell)$. This provides an almost complete classification of the integrable coin billiards. However $\ell_0(\G)$ is strictly positive whenever $\G$ is not a disc, and while it seems very unlikely that there could be any $\ell>0$ for which $\C(\G, \ell)$ is integrable with $\G$ different from a disc, we have not yet been able to obtain a proof of this. Indeed, close to the boundary $\partial \A$, the map can be reduced to a generalised standard map, with parameter $\ell$. The question of nonintegrability of generalised standard maps for small values of the parameter is a notorious open problem plagued by the exponentially small splitting of separatrices, and was solved in the special case of the Chirikov standard map by Lazutkin and his school only after a tremendous effort over many years \cite{lazutkin2005splitting}. However, as we shall see in Section \ref{sec_birkhoff}, in the proof of Theorem \ref{theorem_c} it is demonstrated that, for any $\ell>0$, if $\C(\G,\ell)$ is integrable, then there exist separatrices of integer rotation number that bound (likely elliptic) islands accumulating on the boundary $\partial \A$. This kind of configuration appears to be very fragile, and we expect that the separatrices should generically split, thus giving rise to chaos. We point out that, although the coin map is closely related to the standard map, this is only after performing a local change of variables that effectively hides the strong twist condition. It is possible that the strong twist condition could itself be used to prove nonintegrability via different methods, thus avoiding problems of exponential smallness.
	
	By contrast, in the classical setting, the only known integrable billiard is in an ellipse (with the circle being a special case). Indeed, this is famously conjectured to be the only integrable classical billiard, a problem which remains open in its full generality, and is called the \emph{Birkhoff conjecture}. 
	\item
	\textbf{Ergodicity:} Clearly, in the case of homeomorphisms of a two-dimensional annulus, the presence of invariant essential curves implies that the system is not ergodic, as such curves necessarily divide the phase space into invariant regions of strictly positive, but not full, Liouville measure. As a result, our Theorem \ref{theorem_a} implies that if $\ell$ is small or if $\G$ is close to a disc, then the coin mapping on $\C \left( \G, \ell \right)$ is not ergodic. 
	
	As a comparison with classical billiards, a famous result of Lazutkin implies the existence of invariant curves near the boundary in \emph{any} strictly convex and sufficiently smooth billiard \cite{lazutkin1973existence}, so the classical billiard is not ergodic. Therefore, with respect to ergodicity, the coin $\C \left( \G, \ell \right)$ for small values of $\ell$ and the limiting billiard $\G$ agree. 
	
	Essential invariant curves are not, however the only obstacle to ergodicity. Elliptic islands around elliptic periodic orbits, whenever they exist, are nontrivial invariant sets of positive measure. As an example of when they can exist, we can consider the case when $\G$ is an ellipse (see Section \ref{sec_num} for details). In general, it is not easy to rule out the existence of elliptic periodic orbits for a concrete model, but if this could be done, then for sufficiently large values of $\ell$, it may be possible to observe ergodicity. 
\end{enumerate}

\subsection*{Structure of the Paper}

The structure of the paper is as follows. In Section \ref{sec_coin} we define the classical billiard and the coin billiard. Section \ref{sec_gen} contains a construction of a generating function of the coin billiard. In Section \ref{sec_kam} we prove Theorem \ref{theorem_a}, whereas Theorem \ref{theorem_b} is proved in Section \ref{sec_standardmapmather}. Section \ref{sec_birkhoff} contains the proof of Theorem \ref{theorem_c}. The results of the numerical experiments with the elliptical coins can be found in Section \ref{sec_num}. Finally, in the appendix, we describe the version of Moser's theorem which we use to prove Theorem \ref{theorem_a}.

\section{The Coin Mapping}\label{sec_coin}

In this section we define the classical billiard map, and subsequently the coin map. We describe several salient features of the latter. 

\subsection{The Classical Billiard}

Denote by $\G \subset \R^2$ a bounded strictly convex domain with $C^r$ ($r=2,3, \ldots, \infty, \omega$) boundary. Rescaling $\R^2$ if necessary, we may assume without loss of generality that the length of $\partial \G$ is $2 \pi$, and introduce an arclength parametrisation $\gamma : \T \to \R^2$, where $\T := \R / 2 \pi \Z$. We assume that $\gamma$ is oriented counterclockwise. Denote by $\f \in \T$ the arclength parameter, and let $\A := \T \times (0, \pi)$. The billiard map $T_1: \A \to \A$ is defined by $T_1(\f, \t) = (\bar \f, \bar \t)$ in the following way. A phase point $(\f, \t)$ uniquely determines a point $x \in \partial \G$ and an inward-pointing unit velocity vector $v \in T_x^1 \R^2$ by $x = \gamma(\f)$ and $v = \mathcal R (\t) \gamma'(\f)$, where $\mathcal R (\t)$ is the usual counterclockwise rotation matrix by an angle $\t$. The billiard dynamics follows the straight line in the direction of $v$ until the next point of collision with the boundary at $\gamma (\bar \f)$, where the velocity vector $v$ is reflected according to the optical law: the angles of incidence and reflection coincide. Then $\bar \t$ is this new incidence/reflection angle. 

It is well-known that the billiard map $T_1$ is a $C^{r-1}$ twist diffeomorphism. Indeed, we lose one degree of differentiability in the finite regularity case because the definition of the map requires $\gamma'$. Moreover invertibility, as well as the regularity of the inverse follows from the easily verifiable fact that $\mathcal I \circ T_1 \circ \mathcal I = T_1^{-1}$, where $\mathcal I (\f, \t) = (\f, \pi - \t)$. The latter property is called \emph{reversibility}. The twist property $\partial \bar \f/\partial \t >0$ too is easily verifiable by elementary geometrical considerations: if we fix a phase point $(\f,\t) \in \A$, keep $\f$ unchanged, and consider $\t' > \t$, then the point $\gamma \circ \Pi_{\f} \circ T_1 (\f, \t')$ lies strictly in the positive counterclockwise direction from $\gamma \circ \Pi_{\f} \circ T_1 (\f, \t)$. 

Although the billiard map admits a pleasant geometrical definition, there is in general no closed formula for $T_1$. However, near the boundary $\partial \A$ we can compute its Taylor series expansion. Indeed, for values of $\t$ close to zero the expansion
\begin{equation}\label{eq_billexp}
	\bar \f = \f + 2 \,  \rho (\f)\, \t + O\left(\t^2 \right), \quad \bar \t = \t - \frac{2}{3} \, \rho'(\f)\, \t^2 + O\left(\t^3 \right)
\end{equation}
with $(\bar \f, \bar \t) = T_1(\f,\t)$ was computed in \cite{lazutkin1973existence}, where $\rho (\f)$ is the radius of curvature of $\partial \G$ at $\gamma (\f)$. Obviously an analogous expansion holds near the other part of the boundary, for values of $\t$ close to $\pi$, by the reversibility property. In the paper \cite{lazutkin1973existence} the author makes a further transformation to a remarkable system of coordinates (now commonly called \emph{Lazutkin coordinates}) in which the map $T_1$ admits an even simpler expansion. We note that Lazutkin coordinates are not helpful in the present paper because in those coordinates when we consider, as in the next section, the composition $T_2 \circ T_1$, we lose track of the near-integrable nature of the problem.

\subsection{The Coin Billiard}

Maintaining the notation of the previous section, and with $\ell >0$, we denote by $\C(\G, \ell)$ the cylinder of height $\ell$ with homeomorphic copies of $\G$ on the top and on the bottom, and call it the \emph{coin of height $\ell$}. We consider the motion of a particle on the surface of $\C(\G, \ell)$. On the top of the coin, the motion of the particle is equivalent to the billiard on $\G$. It starts at a point $\g(\f)$ with its unit velocity vector $v$ making an angle $\t$ with the boundary $\partial \G$. It moves in a straight line in the direction of $v$ until it reaches $\partial \G$ again. When the particle reaches the boundary $\partial \G$ with angle of incidence $\bar \t$ at $\g(\bar \f)$ (as given by $T_1$), rather than reflect back inside, the velocity vector reflects over the edge of the cylinder so that it makes an angle $\bar \t$ with the upper boundary of the cylinder. The particle then follows the geodesic on the surface of the cylinder until it reaches the copy of $\G$ at the bottom of the coin. Here the motion is reflected back inside $\G$ with an equal angle $\bar \t$ (see Figure \ref{figure_coin}). In order to calculate the augmentation of the $\f$ coordinate caused by following the geodesic on the cylinder $\T \times [0, \ell]$, we consider the universal cover $\R \times [0, \ell]$ of the cylinder. On this strip, the geodesic becomes a straight line, making an angle $\bar \t$ with the upper boundary. Its projection onto the real line thus has length $\Delta$ which can be calculated from $\tan \bar \t = \frac{\ell}{\Delta}$. 

\begin{figure}[h]\label{coin}
	\centering
	\includegraphics[width=1\textwidth]{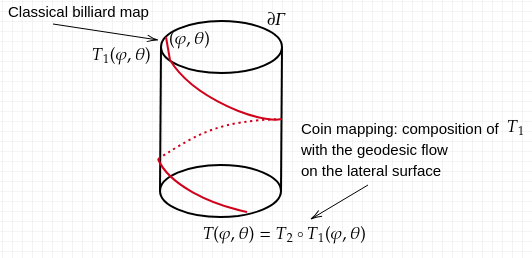}
	\caption{A schematic picture of the motion of the particle on the configuration coin.\label{figure_coin}}
\end{figure}

The map $T$ describing this motion is therefore given by $T=T_2 \circ T_1$ where $T_1$ is the billiard map from the previous section, and $T_2(\f, \t) = (\f + \ell \cot \t, \t)$. 

\begin{example*}
	As an example, consider the coin $\C \left( \mathbb D, \ell \right)$ where $\mathbb D = \{ q \in \R^2 : \| q \| = 1 \}$ is the unit disc in $\R^2$ and $\ell >0$. It is well-known (and can be proved by basic trigonometry) that the billiard map in $\mathbb D$ is given by $T_1(\f,\t) = (\f + 2 \t, \t)$. Therefore the coin map in $\C \left( \mathbb D, \ell \right)$ is given by $T(\f, \t) = (\f + 2 \t + \ell \cot \t,\t)$. It follows that every curve corresponding to constant values of $\t$ is invariant, so the map is completely integrable.
	
	Notice moreover that, writing $(\bar \f, \bar \t) = T(\f,\t)$, we have $\frac{\partial \bar \f}{\partial \t} = 2 - \frac{\ell}{\sin^2 \t}$. The latter has two zeros $\t_{\pm}(\ell) \in (0, \pi)$ whenever $\ell \in (0,2)$. It follows that there are two essential curves $\{ \t = \t_{\pm}(\ell) \}$ in $\A$ where the twist condition fails whenever $\ell \in (0,2)$, one such curve $\{ \t = \frac{\pi}{2} \}$ when $\ell = 2$, and whenever $\ell >2$, the coin map in $\C \left( \mathbb D, \ell \right)$ is a twist map. 
\end{example*}

More generally, for any billiard $\G$ if $\ell>0$ is small then there are curves where the twist condition fails, but if $\ell$ is sufficiently large then $T$ is a twist map. However for any $\ell$, the map $T$ is a twist map in a neighbourhood of the boundary $\partial \A$ of the phase space, for the following reason. Although we have no closed formula for $T$ in general (since we have none for $T_1$), we can combine our formula for $T_2$ with the expansion \eqref{eq_billexp} of $T_1$ to obtain the expansion
\begin{equation}\label{eq_coinexp}
	\bar \f = \f + \frac{\ell}{\bar \t} + O \left( \t \right) , \quad \bar \t = \t - \frac{2}{3} \, \rho' (\f) \, \t^2 + O \left( \t^3 \right)\ ,
\end{equation}
with $(\bar \f, \bar \t) = T(\f,\t)$. It is thus clear that $\frac{\partial \bar \f}{\partial \t} < 0$ for all sufficiently small values of $\t$. Analogous formulas hold for values of $\t$ close to $\pi$.

\section{Generating Function}\label{sec_gen}

In this section we construct a generating function for the coin map. Denote by $\lambda = \cos \t \, d \f$ the Liouville 1-form, so that the symplectic form is $\Omega = d \lambda = \sin \t \, d \f \wedge d \t$. Recall that a map $F:\A \to \A$ is exact symplectic if and only if $\oint_C \lambda = \oint_{F(C)} \lambda$ for any closed path $C \subset \A$, or equivalently, if there is a function $h:\A \to \mathbb R$ such that $F^* \lambda - \lambda = dh$. Sometimes, however, it is more convenient when $h$ is not a function of the phase space $\A$, but rather of the second power $\R^2$ of the universal cover $\R$ of the configuration space $\T$. We say that $h: \R^2 \to \R$ is a \emph{generating function} for (the lift to the universal cover of) $F$ if $F(\f,\t) = (\bar \f, \bar \t)$ if and only if $\partial_1 h(\f, \bar \f) = \cos \t$ and $\partial_2 h(\f, \bar \f) = - \cos \bar \t$. 

For the argument in Section \ref{sec_standardmapmather} we require the existence of a generating function for the coin map $T$. In the following lemma, we construct generating functions for $T_1$ and $T_2$. This implies immediately, of course, that $T_1$ and $T_2$ are exact symplectic, and in turn the coin map $T=T_2 \circ T_1$ is exact symplectic. However this \emph{does not imply} the existence of a global generating function for $T$, because the coin map $T$ is not necessarily a twist map, as pointed out in Section \ref{sec_coin}. Since the coin map does satisfy a twist property in a neighbourhood of $\partial \A$ a generating function can be shown to exist in those regions. For completeness, we include a proof of this fact. 

\begin{lemma}\label{lemma_generatingfunctions}
	The maps $T_1$, $T_2$, $T$ satisfy the following properties. 
	\begin{enumerate} 
		\item \label{item_genfunc1}
		The billiard map $T_1$ is exact symplectic with respect to the 1-form $\lambda$. Moreover the function $h_1(\f, \bar \f) = - \| \gamma (\f) - \gamma (\bar \f ) \|$ generates $T_1$ in the sense that $T_1 (\f,\t)=(\bar \f, \bar \t)$ if and only if $\partial_1 h_1(\f, \bar \f) = \cos \t$ and $\partial_2 h_1(\f, \bar \f) = - \cos \bar \t$. 
		\item \label{item_genfunc2}
		The map $T_2$ is exact symplectic with respect to the 1-form $\lambda$. Moreover the function $h_2(\f, \bar \f) = - \ell \sqrt{1 + \ell^{-2} \left( \bar \f - \f \right)^2}$ generates $T_2$ in the sense that $T_2 (\f,\t)=(\bar \f, \bar \t)$ if and only if $\partial_1 h_2(\f, \bar \f) = \cos \t$ and $\partial_2 h_2(\f, \bar \f) = - \cos \bar \t$. 
		\item \label{item_genfunc3}
		The coin map $T$ is exact symplectic with respect to the 1-form $\lambda$. Moreover there is a $\delta >0$ such that, defining the set $D = \left\{ (\f, \t) : \t \in (0, \delta) \right\}$, there is a function $h(\f, \bar \f)$ such that for $(\f,\t) \in D$, $T(\f, \t) = (\bar \f, \bar \t)$ if and only if $\partial_1 h(\f, \bar \f) = \cos \t$ and $\partial_2 h(\f, \bar \f) = - \cos \bar \t$. 
	\end{enumerate}
\end{lemma}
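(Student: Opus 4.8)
The plan is to prove parts \eqref{item_genfunc1} and \eqref{item_genfunc2} by direct verification that the proposed functions $h_1$ and $h_2$ generate $T_1$ and $T_2$ respectively, and then deduce part \eqref{item_genfunc3} by composition together with a twist argument near $\partial\A$.

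For \eqref{item_genfunc1}, I would compute $\partial_1 h_1(\f,\bar\f)$ and $\partial_2 h_1(\f,\bar\f)$ directly. Since $h_1(\f,\bar\f) = -\|\gamma(\f) - \gamma(\bar\f)\|$, we get $\partial_1 h_1 = -\frac{\langle \gamma(\f)-\gamma(\bar\f), \gamma'(\f)\rangle}{\|\gamma(\f)-\gamma(\bar\f)\|}$. Writing the unit vector from $\gamma(\f)$ to $\gamma(\bar\f)$ as $v$, this is $\langle v, \gamma'(\f)\rangle = \cos\t$ by definition of the billiard angle (the outgoing velocity at $\gamma(\f)$ makes angle $\t$ with $\gamma'(\f)$, and $|\gamma'|=1$ by the arclength parametrisation). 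Similarly $\partial_2 h_1 = \frac{\langle \gamma(\f)-\gamma(\bar\f),\gamma'(\bar\f)\rangle}{\|\gamma(\f)-\gamma(\bar\f)\|} = -\langle v, \gamma'(\bar\f)\rangle = -\cos\bar\t$, using that the incoming velocity at $\gamma(\bar\f)$ is $v$ and the reflection law identifies the angle of incidence with $\bar\t$. Then exact symplecticity with respect to $\lambda = \cos\t\,d\f$ follows from the standard fact that a map admitting a generating function in this sense is exact symplectic: indeed $T_1^*\lambda - \lambda = \cos\bar\t\,d\bar\f - \cos\t\,d\f = -\partial_2 h_1\,d\bar\f - \partial_1 h_1\,d\f$, which is $-d\big(h_1(\f,\bar\f)\big)$ up to a sign convention I would fix carefully; in any case it is exact.

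For \eqref{item_genfunc2}, the verification is a routine computation. With $h_2(\f,\bar\f) = -\ell\sqrt{1+\ell^{-2}(\bar\f-\f)^2}$, one has $\partial_1 h_2 = \frac{\ell^{-1}(\bar\f-\f)}{\sqrt{1+\ell^{-2}(\bar\f-\f)^2}}$ and $\partial_2 h_2 = -\frac{\ell^{-1}(\bar\f-\f)}{\sqrt{1+\ell^{-2}(\bar\f-\f)^2}}$. Since $T_2$ keeps $\t$ fixed ($\bar\t = \t$) and sends $\f \mapsto \bar\f = \f + \ell\cot\t$, i.e. $\bar\f - \f = \ell\cot\t$, we substitute: $\ell^{-1}(\bar\f-\f) = \cot\t$ and $1 + \cot^2\t = 1/\sin^2\t$, so $\partial_1 h_2 = \cot\t / (1/|\sin\t|) = \cos\t$ (using $\sin\t > 0$ on $(0,\pi)$), and $\partial_2 h_2 = -\cos\t = -\cos\bar\t$, as required. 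Again this yields exact symplecticity of $T_2$.

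For \eqref{item_genfunc3}, exact symplecticity of $T = T_2\circ T_1$ is immediate since the composition of exact symplectic maps is exact symplectic (equivalently, $\oint_C\lambda$ is preserved under both factors). The remaining point is the existence of a local generating function $h(\f,\bar\f)$ on $D = \{\t\in(0,\delta)\}$. Here I would invoke the twist expansion \eqref{eq_coinexp}: $\bar\f = \f + \ell/\bar\t + O(\t)$ with $\frac{\partial\bar\f}{\partial\t} < 0$ for $\t$ small, so for $\delta$ small enough the map $(\f,\t)\mapsto(\f,\bar\f)$ is a diffeomorphism onto its image, allowing us to use $(\f,\bar\f)$ as coordinates. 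The standard construction then applies: define $h$ by integrating the closed $1$-form $\cos\t\,d\f - \cos\bar\t\,d\bar\f$ (closed because $T$ is exact symplectic, hence this form is exact on the simply connected strip once lifted to the universal cover $\R^2$), obtaining $h(\f,\bar\f)$ with $\partial_1 h = \cos\t$, $\partial_2 h = -\cos\bar\t$. The main obstacle — and the only genuinely delicate point — is justifying that the strip $D$ lifts to a simply connected region in $\R^2$ on which the relevant $1$-form is exact, i.e. checking that the period $\oint\lambda$ over a generating loop vanishes; this is where exact (as opposed to merely) symplecticity of $T$, established above, is used, together with the fact that the twist property makes $(\f,\bar\f)$ a legitimate coordinate system so that $h$ is well defined as a function of those two variables rather than of the full phase point.
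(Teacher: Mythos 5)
Your parts (1) and (2) match the paper's proof: direct differentiation of $h_1$ and $h_2$, using $\bar\f-\f=\ell\cot\t$ and $1+\cot^2\t=1/\sin^2\t$ for $T_2$. For part (3) you take a genuinely different route. The paper builds $h$ \emph{explicitly} by composing the two generating functions through the intermediate impact point: using the twist property it defines, via the implicit function theorem, a $C^{r-1}$ map $\Phi(\f,\bar\f)$ giving the point where the trajectory leaves the top copy of $\G$, sets $h(\f,\bar\f)=h_1(\f,\Phi(\f,\bar\f))+h_2(\Phi(\f,\bar\f),\bar\f)$, and verifies the defining relations by the chain rule, the key point being the cancellation $\partial_2 h_1(\f,\Phi)+\partial_1 h_2(\Phi,\bar\f)=-\cos\bar\t+\cos\bar\t=0$. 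You instead invoke the abstract fact that a symplectic twist map admits a generating function in the $(\f,\bar\f)$ variables, obtained by integrating the $1$-form $\cos\t\,d\f-\cos\bar\t\,d\bar\f$ on the lifted strip; this is correct and shorter, though less constructive (the paper's formula also yields the regularity of $h$ via that of $\Phi$, which your argument gets only indirectly from the twist diffeomorphism). One small correction to your closing remark: the point you single out as the delicate one is not really an issue. The lift of $D$ to $\R\times(0,\delta)$ is simply connected and the twist map $(\f,\t)\mapsto(\f,\bar\f)$ is a diffeomorphism onto its image $\Delta\subset\R^2$, so $\Delta$ is simply connected; closedness of $\cos\t\,d\f-\cos\bar\t\,d\bar\f$, which is equivalent to plain symplecticity of $T$ in these coordinates, already gives exactness there, with no periods to check. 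Exact symplecticity (inherited from $T_1$, $T_2$ as you say) is what is needed for the first assertion of part (3), i.e.\ that $T$ is exact symplectic on the annulus, not for the local generating function.
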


\begin{proof}
	Part \ref{item_genfunc1} of the lemma is well-known, and can easily be verified by direct differentiation of the function $h_1$. As for part \ref{item_genfunc2}, observe that the equation $\bar \f = \f + \ell \cot \t$ for the $\f$ component of $T_2$ can be rewritten to read 
	\begin{equation}\label{eq_invertcot1}
		\ell^{-1} ( \bar \f - \f) = \frac{\cos \t}{\sqrt{1 - \cos^2 \t}}
	\end{equation}
	so we obtain
	\[
	\cos \t = \frac{\ell^{-1} ( \bar \f - \f)}{\sqrt{1+\ell^{-2} ( \bar \f - \f)^2}}. 
	\]
	Therefore, differentiation of the function $h_2(\f, \bar \f) = - \ell \sqrt{1 + \ell^{-2} \left( \bar \f - \f \right)^2}$ implies that $\partial_1 h_2 \left( \f, \bar \f \right) = \cos \t$ and $\partial_2 h_2 \left( \f, \bar \f \right) = - \cos \t = - \cos \bar \t$. 
	
	We now address part \ref{item_genfunc3} of the lemma. The fact that $T=T_2 \circ T_1$ is exact symplectic follows immediately from the exact symplecticity of $T_1$, $T_2$. We fix $\delta >0$ small enough so that the expansion \eqref{eq_coinexp} makes sense, and $T$ satisfies the twist condition $\frac{\partial \bar \f}{\partial \t} <0$ in $D$. Define the set
	\[
	\Delta = \left\{ (\f_0, \f_1) \in \R^2 : \exists \, \t_0 \in (0, \delta) \textrm{ with } \Pi_{\f} \circ T (\f_0, \t_0) = \f_1 \right\}. 
	\]
	Let $(\f_0, \f_1) \in \Delta$. Then there is $\t_0 \in (0, \delta)$ with $T (\f_0, \t_0) = \f_1$. Observe that this $\t_0$ is unique, as a result of the twist property of $T$ in $D$. Indeed, if $\t_0' < \t_0$ then $\Pi_{\f} \circ T (\f_0, \t_0') > \f_1$, and we get the opposite inequality in the case where $\t_0' > \t_0$. It follows that there is a well-defined function $\Phi: \Delta \to \R$, with $\Phi(\f_0, \f_1) = \Pi_{\f} \circ T_1(\f_0, \t_0)$. 
	
	The map $\Phi$ is $C^{r-1}$ whenever the boundary is $C^r$. Indeed, this can be seen by direct computation of the derivatives, differentiating the equations
	\begin{align}
		\f_0 ={}& \Pi_{\f} \circ T_1^{-1} \left( \Phi (\f_0, \f_1), \arccos \partial_1 h_2 \left( \Phi (\f_0,\f_1),\f_1 \right) \right) \\
		\f_1 ={}& \Pi_{\f} \circ T_2 \left( \Phi (\f_0, \f_1), \arccos \left( - \partial_2 h_1 \left( \f_1, \Phi (\f_0, \f_1) \right) \right) \right).
	\end{align}
	We do not include the computations as they are lengthy, and as we do not require formulas for the derivatives of $\Phi$ in this paper. However it can be seen from these equations that the first derivatives of $\Phi$ depend on the first derivatives of $T_1$, $T_2$ and the second derivatives of $h_1$, $h_2$, so they are $C^{r-2}$, and so $\Phi$ is $C^{r-1}$. 
	
	We then define the function $h : \Delta \to \R$ by
	\[
	h(\f, \bar \f) = h_1 \left(\f, \Phi \left(\f, \bar \f \right) \right) + h_2 \left( \Phi \left(\f, \bar \f \right), \bar \f \right). 
	\]
	Suppose $(\f, \t) \in D$ and write $(\bar \f, \bar \t) = T ( \f, \t)$. Then, by the construction of $\Phi$, we have 
	\begin{align}
		\partial_1 h_1(\f, \Phi(\f, \bar \f)) = \cos \t, \quad \partial_2 h_1(\f, \Phi(\f, \bar \f)) = - \cos \bar \t \\
		\partial_1 h_2(\Phi(\f, \bar \f), \bar \f) = \cos \bar \t, \quad \partial_2 h_2(\Phi(\f, \bar \f), \bar \f) = - \cos \bar \t. 
	\end{align}
	Writing $\Phi = \Phi(\f, \bar \f)$, it follows that
	\[
	\partial_1 h(\f, \bar \f) = \partial_1 h_1 \left(\f, \Phi  \right) + \left( \partial_2 h_1 \left(\f, \Phi  \right) + \partial_1 h_2 \left( \Phi , \bar \f \right) \right) \partial_1 \Phi = \cos \t
	\]
	and
	\[
	\partial_2 h(\f, \bar \f) = \partial_2 h_2 \left( \Phi , \bar \f \right)  + \left( \partial_2 h_1 \left(\f, \Phi  \right) + \partial_1 h_2 \left( \Phi , \bar \f \right) \right) \partial_2 \Phi = - \cos \bar \t,
	\]
	so $h$ is the function we are looking for. 
\end{proof}

\section{Existence of KAM Curves}\label{sec_kam}

In this section we prove, in two different scenarios, the existence of some KAM curves. These are homotopically nontrivial invariant curves $C$ of the map $T$ in $\A$ such that $T|_C$ is conjugate to a rotation by a Diophantine frequency. The two scenarios we consider are: when the corresponding billiard table $\Gamma$ is close to a disc; and when the height $\ell$ of the coin is small. In each of these situations we make a coordinate transformation in a band in the phase space that is close to, but not accumulating on, the boundary. We then apply an existing KAM theorem (see Theorem \ref{theorem_moser} in the appendix) to establish the existence of the invariant curves. For simplicity of exposition, we only prove the existence of the invariant curves in the analytic category, but we point out that they their existence in less regular topologies would follow from existing KAM theorems that deal with those topologies. 

\subsection{The Near-Circular Case}

The main result of this section is the following.

\begin{theorem}\label{theorem_nckam}
Let $\Gamma_{\e} \subset \R^2$ be a real-analytic family of compact and strictly convex billiard domains, such that $\partial \Gamma_{\e}$ is a closed $C^{\omega}$ plane curve of length $2 \pi$, and such that $\Gamma_0$ is a disc. Fix a constant $c_1 >0$. Let $\ell>0$, and denote by $T$ the coin mapping with respect to $\C \left( \Gamma_{\e},\ell \right)$. Then there is an $\e_0 >0$ such that for any $\e < \e_0$ the subset $\T \times [\e - c_1 \, \e^2, \e + c_1 \, \e^2]$ of $\A$ contains $T$-invariant essential curves with Diophantine rotation numbers, and the union of these curves is a set of positive measure. 
\end{theorem}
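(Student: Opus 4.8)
The plan is to reduce, inside the strip $\T \times [\e - c_1\e^2, \e + c_1\e^2]$, the coin map to a small perturbation of an integrable twist map with nonvanishing twist, and then to invoke Moser's invariant curve theorem (Theorem \ref{theorem_moser}). The point of near-circularity is that it forces the nonlinear term of the $\t$-dynamics to be of higher order in $\e$ in this strip.

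First I would record how near-circularity affects the coefficients in \eqref{eq_coinexp}. Since $\G_0$ is a disc of perimeter $2\pi$, its radius of curvature is $\rho_0 \equiv 1$, so $\rho_0' \equiv 0$; as $\G_\e$ depends real-analytically on $\e$, this gives $\rho_\e(\f) = 1 + O(\e)$ and, crucially, $\rho_\e'(\f) = O(\e)$ uniformly in $\f$ (indeed on a complex strip around $\T$). Next I would rescale: set $\t = \e + \e^2 v$, with $v$ ranging over a fixed interval $[-C_1, C_1]$ chosen slightly larger than $[-c_1, c_1]$. Substituting into \eqref{eq_coinexp} and collecting powers of $\e$ — using $\t^2 = \e^2(1 + \e v)^2$, $\rho_\e' = O(\e)$, $\bar\t = \t + O(\e^3)$, and $\tfrac{\ell}{\bar\t} = \tfrac{\ell}{\e}(1 + \e v + O(\e^2))^{-1} = \tfrac{\ell}{\e} - \ell v + O(\e)$ — one finds that in the coordinates $(\f, v)$ the map reads
\begin{equation*}
	(\f, v) \longmapsto \bigl(\, \f + \omega_\e - \ell v + \e f_\e(\f, v),\ \ v + \e g_\e(\f, v)\,\bigr),
\end{equation*}
where $\omega_\e \equiv \ell/\e \pmod{2\pi}$, the twist is the nonzero constant $-\ell$, and $f_\e, g_\e$ are uniformly bounded. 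To apply the analytic KAM theorem I need $f_\e, g_\e$ to be analytic and uniformly bounded on a fixed complex strip $\{\,|\mathrm{Im}\,\f| < \sigma,\ |\mathrm{Im}\, v| < \sigma\,\}$; since this corresponds to a complex $\t$-neighbourhood of $[\e - c_1\e^2, \e + c_1\e^2]$ of size $\sim \e^2$, which stays at distance $\geq \e/2$ from $\t = 0$ once $\e$ is small, this reduces to the analyticity of the billiard map on a complex neighbourhood of the boundary $\T \times \{0\}$ and to uniform control of the remainders in Lazutkin's expansion \eqref{eq_billexp}; I would cite \cite{lazutkin1973existence} for this.

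With this normal form in hand, the exact-symplecticity of $T$ (Lemma \ref{lemma_generatingfunctions}, which moreover provides a generating function near $\partial\A$) passes to the rescaled map in the form required by Moser's theorem: the rescaling is fibre-preserving and homotopically trivial, hence preserves the intersection property. Moser's theorem (Theorem \ref{theorem_moser}) then yields, for all $\e$ below some $\e_0 = \e_0(c_1, \ell, \{\G_\e\})$, a positive-measure set of sufficiently Diophantine rotation numbers in the twist interval $\{(\omega_\e - \ell v)/2\pi : v \in [-C_1, C_1]\}$ — an interval of length $\ell C_1/\pi > 0$, hence containing a set of full measure of Diophantine numbers — that are realised by $T$-invariant essential curves, each a graph at distance $o(1)$ from the corresponding unperturbed circle $\{v = \mathrm{const}\}$. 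Taking $C_1 > c_1$ with the margin exceeding this $o(1)$ displacement, every curve whose rotation number corresponds to $v \in [-c_1, c_1]$ lies inside $\T \times [\e - c_1\e^2, \e + c_1\e^2]$, and the union of these curves has positive Lebesgue measure, which is the assertion.

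The step I expect to be the main obstacle is the uniform analytic input just described: one must ensure that the $O(\t)$ and $O(\t^3)$ remainders in \eqref{eq_coinexp}, as well as the $\e$-dependence through $\rho_\e$, are genuinely analytic and bounded by the stated powers of $\t$ and $\e$ on a complex neighbourhood of the shrinking strip, uniformly as $\e \to 0$ (equivalently, as the strip approaches $\partial\A$). This is delicate precisely because billiard trajectories become tangent to $\partial\G$ as $\t \to 0$, so the naive bounds degenerate; the remedy is the analytic structure of the billiard map near the boundary established by Lazutkin. Everything after that — the bookkeeping of orders in $\e$, the identification of the twist, the verification of the intersection property, and the invocation of Moser's theorem — is routine.
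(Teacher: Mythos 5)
Your proposal is correct and follows essentially the same route as the paper: rescale $\t = \e \pm \e^2 y$ in the strip, use near-circularity to get $\rho_\e' = O(\e)$ so the map becomes an $O(\e)$-perturbation of the integrable twist $(\f,y)\mapsto(\f+\omega_\e\pm\ell y, y)$, check analyticity, twist, and the intersection property via Lemma \ref{lemma_generatingfunctions}, and apply Moser's theorem. Your extra care about uniform analyticity of the remainders on a shrinking complex neighbourhood and the margin $C_1 > c_1$ ensuring the curves land in the stated strip are sensible refinements of details the paper passes over, but they do not change the argument.
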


\begin{proof}
In order to prove Theorem \ref{theorem_nckam} we use the assumptions to put the map $T$ in a form in which Theorem \ref{theorem_moser} can be applied. In a neighbourhood of the boundary, the coin map admits the expansion \eqref{eq_coinexp}. Fix some $c_1 >0$, as in the statement of Theorem \ref{theorem_nckam}. In the set $\T \times [\e - c_1 \, \e^2, \e + c_1 \, \e^2]$ we make the coordinate transformation $: (x,y) \mapsto (\f, \t)$ where 
\[
\f = x, \quad \t = \e - \e^2 y 
\]
so that $x \in \T$ and $y \in [-c_1, c_1]$. From \eqref{eq_coinexp} the map takes the form $(\bar x, \bar y) = T(x,y)$ with
\[
\bar x = x + \omega + \ell \bar y + O (  \e), \quad \bar y = y + \frac{2}{3} \rho'(x) + O (\e)
\]
where we have defined $\omega = \ell/\e \mod 2 \pi$.

Now, since $\partial \Gamma_0$ is a circle of radius $1$, we can expand the radius of curvature of $\Gamma_{\e}$ in powers of $\e$ to find that $\rho = 1 + \e \, \bar \rho + O \left( \e^2 \right)$. It follows that $\rho' = O(\e)$. Therefore the map $T$ becomes
\[
T:
\begin{dcases}
\bar x =& x + \omega + \ell \, y + O ( \e) \\
\bar y =& y +  O(\e).
\end{dcases}
\]
This map is real-analytic because, after making an analytic coordinate transformation, it is the composition of two analytic maps: $T_2$ is always analytic, independently of parameters, and $T_1$ is analytic because $\partial \Gamma_{\e}$ is. Obviously it is a twist map since $\frac{\partial \bar x}{\partial y} = \ell + O(\e) > 0$ for all sufficiently small values of $\e$. Finally, part \ref{item_genfunc3} of Lemma \ref{lemma_generatingfunctions} tells us that $T$ is exact symplectic (with respect to some Liouville 1-form) which in turn implies that $T$ satisfies the intersection property. Therefore Theorem \ref{theorem_moser} can be applied to complete the proof of Theorem \ref{theorem_nckam}. 
\end{proof}

\subsection{The Case when $\ell \ll 1$}

In this section we prove the following theorem. 

\begin{theorem}\label{theorem_shkam}
Let $\Gamma \subset \R^2$ be a compact and strictly convex billiard domain, such that $\partial \Gamma$ is a real-analytic closed plane curve of length $2 \pi$. Fix any $b_0 > a_0 >0$. Denote by $T$ the coin mapping with respect to $\C(\Gamma, \ell)$. Then there is an $\ell_0 >0$ such that for any $\ell \in (0, \ell_0)$ the subset $\T \times \left[ \frac{\ell}{b_0}, \frac{\ell}{a_0} \right]$ of $\A$ contains $T$-invariant essential curves with Diophantine rotation numbers, and the union of these curves is a set of positive measure. 
\end{theorem}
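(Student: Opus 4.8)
\textbf{Proof proposal for Theorem~\ref{theorem_shkam}.}

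The plan is to mimic the strategy used for Theorem~\ref{theorem_nckam}: inside the strip $\T \times [\ell/b_0, \ell/a_0]$ we rescale the angular variable and perform one further change of coordinates so that the coin map becomes an $O(\ell)$-perturbation of the standard integrable twist map $(x,y)\mapsto(x+y,y)$, and then we apply Moser's invariant curve theorem (Theorem~\ref{theorem_moser}). The point that makes the rescaling work in a fixed (i.e. $\ell$-independent) region is that on this strip the frequency $\ell/\bar\t$ is of order one and varies over the fixed interval $[a_0,b_0]$.

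First I would restrict to the strip and set $\t = \ell u$, so that $u$ ranges over the fixed compact interval $[1/b_0,1/a_0]$, which does not depend on $\ell$; note that for $\ell$ small we have $\t = \ell u < \delta$, so the expansion \eqref{eq_coinexp} applies. Writing $(\bar\f,\bar\t)=T(\f,\t)$ and $\bar u := \bar\t/\ell$, one obtains from \eqref{eq_coinexp}
\[
\bar u = u - \tfrac{2}{3}\,\rho'(\f)\,\ell\, u^2 + O(\ell^2) = u + O(\ell),
\qquad
\bar\f = \f + \frac{\ell}{\bar\t} + O(\t) = \f + \frac{1}{\bar u} + O(\ell) = \f + \frac{1}{u} + O(\ell),
\]
where the crucial observation is that the term $\rho'(\f)\,\t^2$ — which fails to be small in the near-circular argument only because $\rho'$ is large there — is here of size $O(\ell^2)$ simply because $\t=O(\ell)$ and $\rho'$ is a fixed bounded function (as $\partial\Gamma$ is real-analytic); after dividing by $\ell$ it is absorbed into the $O(\ell)$ remainder. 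Thus in the coordinates $(\f,u)$ the coin map is an $O(\ell)$-perturbation of the monotone twist map $(\f,u)\mapsto(\f+1/u,u)$. Setting then $y = 1/u \in [a_0,b_0]$ (again a fixed interval, independent of $\ell$) converts this into $(\f,y)\mapsto(\f+y,y)+O(\ell)$, the standard form.

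Next I would verify the hypotheses of Moser's theorem for this rescaled map: (i) analyticity, which holds because $T=T_2\circ T_1$ is a composition of analytic maps ($T_2$ is always analytic; $T_1$ is analytic since $\partial\Gamma$ is real-analytic), the coordinate changes $\t=\ell u$ and $y=1/u$ are analytic, and for $\ell$ small $\ell\cot\bar\t$ has no pole on a fixed complex neighbourhood of $[1/b_0,1/a_0]$ because $\mathrm{Re}(u)$ is bounded away from $0$ there; (ii) the twist condition $\partial\bar\f/\partial y = 1 + O(\ell) > 0$ for $\ell$ small; and (iii) the intersection property, which follows from $T$ being exact symplectic (part~\ref{item_genfunc3} of Lemma~\ref{lemma_generatingfunctions}, applicable since for $\ell$ small the strip lies inside the set $D$ there) and is preserved under conjugation by a diffeomorphism. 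Moser's theorem then produces, for all sufficiently small $\ell$, a positive-measure family of $T$-invariant essential curves with Diophantine rotation numbers lying in the fixed interval $[a_0,b_0]$; undoing the substitutions $y = 1/u = \ell/\t$ places these curves inside $\T\times[\ell/b_0,\ell/a_0]$, and (exactly as for Theorem~\ref{theorem_nckam}) their union, being a family of analytic graphs close to the horizontal lines $\{\t=\ell/c\}$ parametrised by the rotation number $c$, is a set of positive Lebesgue measure in $\A$.

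The main obstacle is item (i): one must check that, after the rescaling $\t=\ell u$, the remainder terms in the displayed formulas are genuinely $O(\ell)$ in the analytic sup-norm on a complex neighbourhood of $\T\times[1/b_0,1/a_0]$ whose size does not shrink as $\ell\to 0$. This requires uniform analytic control, near $\partial\A$, of the Taylor remainders in the billiard expansion \eqref{eq_billexp} (the $O(\t^2)$ and $O(\t^3)$ terms), together with the elementary but essential fact that $\ell\cot(\ell u) = 1/u + O(\ell^2)$ is analytic and perturbatively small precisely because $\ell u$ stays away from the poles of $\cot$. Granting this uniform analytic control, the application of Theorem~\ref{theorem_moser} is routine, just as in the near-circular case.
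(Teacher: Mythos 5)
Your proposal is correct and follows essentially the same route as the paper: the substitution $\t=\ell u$ followed by $y=1/u$ is exactly the paper's single change of variables $\eta=\ell/\t$, after which one reads off $(\f,\eta)\mapsto(\f+\eta,\eta)+O(\ell)$ from \eqref{eq_coinexp}, checks analyticity, the twist condition, and the intersection property via part \ref{item_genfunc3} of Lemma \ref{lemma_generatingfunctions}, and applies Theorem \ref{theorem_moser}. The only (harmless) difference is bookkeeping: the paper first enlarges the strip to $\T\times[\ell/b,\ell/a]$ with $a<a_0<b_0<b$ so that Moser's theorem yields curves with rotation numbers throughout $[a_0,b_0]$, whereas your version produces them in a slightly smaller subinterval, which still lies inside $\T\times[\ell/b_0,\ell/a_0]$ as required.
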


\begin{proof}
As in the proof of Theorem \ref{theorem_nckam}, we look for a suitable coordinate transformation that puts the map in a form where we can apply Theorem \ref{theorem_moser}. Fix $b_0 > a_0 >0$ as in the statement of the theorem, and choose real numbers $a, b$ such that $b>b_0>a_0>a >0$. In the set $\T \times \left[ \frac{\ell}{b}, \frac{\ell}{a} \right]$ we make the coordinate transformation $:(\f,\t) \mapsto (\xi, \eta)$ where 
\[
\xi = \f, \quad \eta = \frac{\ell}{\t}
\]
so that $\f \in \T$ and $\eta \in [a,b]$. Then from \eqref{eq_coinexp} we see that the map takes the form $T(\xi,\eta)=(\bar \xi, \bar \eta)$ where
\[
T:
\begin{dcases}
\bar \xi =& \xi + \eta + O (\ell) \\
\bar \eta =& \eta + O (\ell). 
\end{dcases}
\]
Then obviously the map is real-analytic, satisfies a twist condition for all sufficiently small $\ell >0$ since $\frac{\partial \xi}{\partial \eta} = 1 + O(\ell)$, and has the intersection property by part \ref{item_genfunc3} of Lemma \ref{lemma_generatingfunctions}. Therefore the rescaled map satisfies the assumptions of Theorem \ref{theorem_moser}, and so Moser's theorem completes the proof of Theorem \ref{theorem_shkam}. 
\end{proof}

\section{Construction of Vertically-Mapped Graphs}\label{auxiliary_section}

In this section we prove some statements that will be used later in the paper. In particular, we show that, due to its unbounded twist near the boundary, the coin map admits a sequence of smooth graphs accumulating on the boundary whose points are mapped onto the same abscissa after one iteration (for this reason, we sometimes refer to them as {\it vertically-mapped graphs}). This has important consequences because it determines a priori the regions where possible invariant curves may lie close to the boundary. The reasonings that we make here are loosely related to those made by Poincaré when he showed that, under a small perturbation, a curve with integer rotation number generically does not persist but nevertheless gives rise to points with integer rotation number \cite[Section 20]{arnoldavezergodicproblems}. Throughout this section, we will indicate by $\widetilde T$ the lift of $T$ to the universal cover $\R \times (0,\pi)$. The main result of this section is the following.

\begin{lemma}\label{grafici}
	There exists $m_0=m_0(\Gamma,\ell)\in \mathbb N$, and a sequence of disjoint well-ordered graphs $\{\mathscr G_m\}_{m\ge m_0}$ of $2\pi$-periodic functions $g_m\in C^4(\R)$ that satisfy the following properties:
	\begin{enumerate}
		\item  The graphs $\{\mathscr G_m\}_{m\ge m_0}$ accumulate uniformly on the boundary;
		\item For any $(\f,\t)\in \mathscr G_m$ one has $\Pi_\f \widetilde T(\f,\t)=\f+2m\pi$;
		\item  $|g_m'(\f)|\le \displaystyle\frac83 \rho''(\f) |g_m(\f)|^2$ for any $\f\in \T$.
	\end{enumerate}
\end{lemma}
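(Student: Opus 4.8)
The plan is to use the coin‐map expansion \eqref{eq_coinexp} near $\partial\A$ to locate, for each large integer $m$, a graph on which the $\f$‐displacement of $\widetilde T$ equals exactly $2m\pi$, and then to control the derivative of the function whose graph this is. First I would write the lift of $T$ in the strip $\{\t\in(0,\delta)\}$ as
\[
\Pi_\f\widetilde T(\f,\t)=\f+\frac{\ell}{\bar\t(\f,\t)}+R_1(\f,\t),\qquad \bar\t(\f,\t)=\t-\tfrac23\rho'(\f)\,\t^2+R_2(\f,\t),
\]
where, because $\partial\G$ is $C^5$, the remainders satisfy $R_1=O(\t)$, $R_2=O(\t^3)$ together with the estimates $\partial_\f R_1=O(1)$, $\partial_\t R_1=O(1)$, $\partial_\f R_2=O(\t^3)$, $\partial_\t R_2=O(\t^2)$ that are uniform in $\f$ (this is exactly the point of the $C^5$ hypothesis flagged in the introduction). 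Then the equation $\Pi_\f\widetilde T(\f,\t)=\f+2m\pi$ becomes $\frac{\ell}{\bar\t(\f,\t)}+R_1(\f,\t)=2m\pi$, i.e.
\[
\Psi_m(\f,\t):=\frac{\ell}{\bar\t(\f,\t)}+R_1(\f,\t)-2m\pi=0.
\]
For $\t=\ell/(2m\pi)+O(1/m^2)$ the leading term is $2m\pi$ and $\partial_\t\Psi_m=-\ell/\bar\t^2\cdot\partial_\t\bar\t+\partial_\t R_1\approx -(2m\pi)^2/\ell<0$ is large and nonvanishing; hence by the implicit function theorem, for all $m\ge m_0(\G,\ell)$ there is a unique $2\pi$‐periodic solution $\t=g_m(\f)$, of class $C^4$ since the map is $C^4$ in the strip (one degree lost from the $C^5$ boundary), with $g_m(\f)=\ell/(2m\pi)+O(1/m^2)$ uniformly in $\f$. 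The graphs $\mathscr G_m$ are disjoint and well ordered for $m$ large because $g_m-g_{m+1}\sim\ell/(2\pi m^2)>0$ dominates the $O(1/m^3)$ fluctuations, and they accumulate uniformly on $\{\t=0\}$, giving properties (1) and (2).

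For property (3) I would differentiate the defining relation $\Psi_m(\f,g_m(\f))=0$ implicitly, obtaining
\[
g_m'(\f)=-\frac{\partial_\f\Psi_m(\f,g_m(\f))}{\partial_\t\Psi_m(\f,g_m(\f))}.
\]
In the denominator, $\partial_\t\Psi_m=-\dfrac{\ell}{\bar\t^{\,2}}\,\partial_\t\bar\t+\partial_\t R_1$ with $\partial_\t\bar\t=1-\tfrac43\rho'(\f)\t+O(\t^2)$, so $\partial_\t\Psi_m=-\ell/g_m^{\,2}\cdot(1+O(g_m))$. In the numerator, $\partial_\f\Psi_m=-\dfrac{\ell}{\bar\t^{\,2}}\,\partial_\f\bar\t+\partial_\f R_1$, and $\partial_\f\bar\t=-\tfrac23\rho''(\f)\t^2+\partial_\f R_2=-\tfrac23\rho''(\f)g_m^{\,2}+O(g_m^{\,3})$ while $\partial_\f R_1=O(g_m)$; crucially the $-\tfrac23\rho''\,g_m^{\,2}$ term is the dominant contribution once multiplied by $\ell/\bar\t^{\,2}\asymp\ell/g_m^{\,2}$, giving $\partial_\f\Psi_m=\tfrac23\rho''(\f)\,\ell+O(g_m)\cdot\ell/g_m^{\,2}$... so after forming the ratio the $\ell$'s cancel and
\[
g_m'(\f)=\frac{\tfrac23\rho''(\f)\,\ell+\ell\cdot O(g_m)/g_m^{\,2}\cdot g_m^{?}}{\ell/g_m^{\,2}}
=\tfrac23\rho''(\f)\,g_m^{\,2}\bigl(1+O(g_m)\bigr).
\]
Taking absolute values and absorbing the $(1+O(g_m))$ factor into the constant (legitimate for $m\ge m_0$, enlarging $m_0$ if necessary) yields $|g_m'(\f)|\le\tfrac83\rho''(\f)\,|g_m(\f)|^2$, which is the asserted bound with the stated constant $8/3=2\cdot(4/3)$ comfortably covering the $\tfrac23(1+O(g_m))$ factor.

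The main obstacle I expect is bookkeeping the $\f$‐derivatives of the remainder terms in \eqref{eq_coinexp}: one must verify that $\partial_\f R_1$ and $\partial_\f R_2$ have the orders claimed above, uniformly in $\f$, since the whole point is that the error in $g_m'$ is genuinely subleading compared with the $\rho''\,g_m^{\,2}$ term — and this is precisely where the $C^5$ regularity of $\partial\G$ is used (the $\t^3$ coefficient in $\bar\t$ involves $\rho'$ and $\rho''$, i.e. derivatives up to the fourth of $\g$, and differentiating once more in $\f$ needs the fifth). A secondary point requiring a little care is uniformity in $\f$ of the implicit‐function‐theorem construction, which follows from compactness of $\T$ together with the uniform lower bound $|\partial_\t\Psi_m|\gtrsim m^2$, but should be stated explicitly so that $m_0$ depends only on $\G$ and $\ell$.
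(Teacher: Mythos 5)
Your proposal follows essentially the same route as the paper's proof: using the expansion \eqref{eq_coinexp} one defines $F_m(\f,\t)=\Pi_\f\widetilde T(\f,\t)-\f-2m\pi$, solves $F_m=0$ by the implicit function theorem thanks to the dominant vertical derivative $\partial_\t F_m\approx-\ell/\t^2$, extends by compactness and periodicity to obtain $C^4$, $2\pi$-periodic, well-ordered graphs $g_m\approx\ell/(2m\pi)$ accumulating on the boundary, and gets the slope bound from the quotient $\partial_\f F_m/\partial_\t F_m$ with numerator $\approx\tfrac23\ell\rho''(\f)$ and denominator $\approx\ell/g_m^2$. The differences are cosmetic (the paper derives uniform accumulation via Dini's theorem and the ordering from the negative twist, while you use the quantitative asymptotics $g_m=\ell/(2m\pi)+O(1/m^2)$), and your remainder bookkeeping, including the minor slip between $\partial_\f R_1=O(1)$ and the later $O(g_m)$, is at the same level of detail as the paper's treatment of $\partial_\f R$.
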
 
\begin{proof}
	
	Throughout this proof, for $m\in \mathbb N$, we use the notation
	\begin{align}\label{Fm}
		\begin{split}
			F_m(\f,\t):=&\Pi_\f(\widetilde T(\f,\t))-\f-2m\pi=\frac{\ell}{\t}+\frac{2}{3}\ell \rho'(\f)+ R(\f,\t)-2m\pi\ ,
		\end{split}
	\end{align} 
	where we have used the expansion \eqref{eq_coinexp} of $\widetilde T$ for small values of $\t$, and where $R(\f,\t)=O(\t)$.
	Then $F_m\in C^4$, as the billiard table is $C^5$ by hypothesis. Due to expression \eqref{Fm}, the twist property, and the continuity of $F_m$, for any fixed $\f_\star \in \T$ there exists $m_1=m_1(\Gamma,\ell,\f_\star )\in \mathbb N$, and a strictly decreasing sequence $\{\t_m=\t_m(\f_\star)\}_{m\ge m_1}$ such that one has  $\t_m\rightarrow 0$, and  $F_m(\f_\star ,\t_m)=0$. 
	
	Moreover, as $\t_m\rightarrow 0$, and $\partial_\t F_m(\f_\star ,\t_m)= -\ell/\t_m^2+O(1)$, for some positive integer $m_2=m_2(\Gamma,\ell,\f_\star )\ge m_1$ one has $\partial_\t F_m(\f_\star ,\t_m)\neq 0$ for any $m\ge m_2$. Then, for any given $m\ge m_2$ one can apply the Implicit Function Theorem, so that $F_m(\f,g_m(\f))=0$ in a neighborhood of $\f=\f_\star $, for an implicit function $g_m$ of class $C^4$ verifying $g_m(\f_\star )=\t_m$. 
	
	The same construction can be repeated around any other point $\f\in \T$, so that by the compactness of the domain, and by the uniqueness of the implicit function, there exists $m_0(\Gamma,\ell)$ such that for any given $m\ge m_0$ the function $g_m$ can be extended to the whole torus $\T$. Then, due to the $2\pi$-periodicity of $F_m(\f,\t)$ with respect to $\f$, one has that $g_m$ can be extended to the whole real line and has period $2\pi$. Moreover $g_{m+1}(\f)< g_m(\f)$ for any $\f\in \R$ and $m\ge m_0$, due to the negative twist property near the boundary.
	
	Then, as $g_m(\f)\rightarrow 0$ pointwise in $\R$, by Dini's Theorem and by the periodicity of $g_m$ one has the uniform convergence $\lim_{m\rightarrow + \infty}\sup_{x\in \R}|g_m(x)|=\lim_{m\rightarrow + \infty}\sup_{x\in \T}|g_m(x)|=0$, and the graphs $\mathscr{G}_m:=\text{graph}(g_m)$ accumulate uniformly on the horizontal axis.
	
	It remains to prove the estimate on the first derivative of $g_m$, which is a simple consequence of the Implicit Function Theorem. In fact, for $m$ sufficiently large, we have 
	\begin{align}
		\begin{split}
			|g_m'(\f)|=\left|\frac{\partial_\f F_m(\f,g_m(\f))}{\partial_\t F_m(\f,g_m(\f))}\right|\stackrel{\eqref{Fm}}{=}\left|\frac{2/3\,\ell\rho''(\f)+\partial_\f R(\f,\t)}{\ell/(g_m(\f))^2+O(1)}\right| \le \frac83 \rho''(\f) (g_m(\f))^2\ .
		\end{split}
	\end{align}
	This concludes the proof.

\end{proof}

Lemma \ref{grafici} has three important corollaries, that we state and prove below.

\begin{corollary}\label{width}
	Given $m\in \mathbb N$ sufficiently large, for any $\f\in \T$ one has 
	\begin{align}
		\begin{split}
			\max\{|g_{m-1}(\f)-g_{m}(\f))|,|g_{m}(\f)-g_{m+1}(\f))|\} \le \frac{3\pi}{\ell} (g_m(\f))^2\ .
		\end{split}
	\end{align}
\end{corollary}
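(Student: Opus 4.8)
The plan is to extract the estimate directly from the near-boundary normal form \eqref{Fm}, bypassing a second appeal to the Implicit Function Theorem. By construction $g_m$ solves $F_m(\f,g_m(\f))=0$, i.e.
\[
\frac{\ell}{g_m(\f)}+\frac{2}{3}\,\ell\,\rho'(\f)+R(\f,g_m(\f))=2m\pi,\qquad R(\f,\t)=O(\t).
\]
Writing the same identity for the index $m+1$ and subtracting cancels the $\rho'$ term and leaves
\[
\ell\Big(\frac{1}{g_{m+1}(\f)}-\frac{1}{g_m(\f)}\Big)=2\pi-\big(R(\f,g_{m+1}(\f))-R(\f,g_m(\f))\big),
\]
with the obvious analogue (the right-hand side $2\pi$ replaced by $-2\pi$) for the pair $(g_{m-1},g_m)$. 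Everything else is a matter of controlling the $R$-terms.

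Next I would record that the error terms $E_m(\f):=R(\f,g_{m+1}(\f))-R(\f,g_m(\f))$ and $E_m'(\f):=R(\f,g_m(\f))-R(\f,g_{m-1}(\f))$ tend to $0$ uniformly in $\f$ as $m\to\infty$: $R$ is $C^4$ on the strip where \eqref{eq_coinexp} holds and vanishes on $\{\t=0\}$, so $|R(\f,\t)|\le C_R\,\t$ there, whence $|E_m(\f)|,|E_m'(\f)|\le C_R\big(g_{m-1}(\f)+g_m(\f)+g_{m+1}(\f)\big)\to0$ uniformly by part (1) of Lemma \ref{grafici}. Consequently there is $m_1\ge m_0$ with $2\pi-E_m(\f)\le 3\pi$ and $2\pi-E_m'(\f)\le 3\pi$ for all $m\ge m_1$ and all $\f\in\T$.

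For the pair $(g_m,g_{m+1})$ the bound is then immediate. Since $0<g_{m+1}(\f)<g_m(\f)$ we rewrite $\frac{1}{g_{m+1}}-\frac{1}{g_m}=\frac{g_m-g_{m+1}}{g_m g_{m+1}}$, so the displayed identity gives
\[
g_m(\f)-g_{m+1}(\f)=\frac{g_m(\f)\,g_{m+1}(\f)}{\ell}\big(2\pi-E_m(\f)\big)\le\frac{3\pi}{\ell}\,g_m(\f)\,g_{m+1}(\f)\le\frac{3\pi}{\ell}\big(g_m(\f)\big)^2 ,
\]
the final step using $g_{m+1}(\f)<g_m(\f)$.

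The pair $(g_{m-1},g_m)$ is the one delicate point, and I expect it to be the main (indeed the only) obstacle: now $g_{m-1}(\f)>g_m(\f)$, and the same computation only gives $g_{m-1}(\f)-g_m(\f)=\ell^{-1}\big(2\pi-E_m'(\f)\big)\,g_m(\f)\,g_{m-1}(\f)$, in which the product $g_m\,g_{m-1}$ exceeds $(g_m)^2$. To remedy this I would solve this same identity for $g_{m-1}$, getting $g_{m-1}(\f)=g_m(\f)\big(1-\ell^{-1}(2\pi-E_m'(\f))\,g_m(\f)\big)^{-1}$; since $\sup_\f g_m(\f)\to0$ and $2\pi-E_m'$ is bounded, this yields $g_{m-1}(\f)\le(1+\eta_m)\,g_m(\f)$ with $\eta_m\to0$ uniformly. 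Substituting back, $g_{m-1}(\f)-g_m(\f)\le\ell^{-1}(1+\eta_m)\big(2\pi-E_m'(\f)\big)\big(g_m(\f)\big)^2$, and as the prefactor converges to $2\pi/\ell<3\pi/\ell$, enlarging $m_1$ once more gives the claim. One could instead run the whole argument through the mean value theorem applied to $F_m$, using $\partial_\t F_m=-\ell/\t^2+O(1)$ from the proof of Lemma \ref{grafici}; the same asymmetry between the upper and lower graphs would reappear and be resolved by the same short bootstrap.
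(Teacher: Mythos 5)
Your proof is correct, but it follows a different mechanism than the paper's. The paper works with the single function $F_m$ of \eqref{Fm}: it Taylor-expands $F_m(\f,\t_m\pm\alpha)$ around the zero $\t_m=g_m(\f)$ with the specific displacement $\alpha=\frac{3\pi}{\ell}\t_m^2$, shows $|F_m(\f,\t_m\pm\alpha)|\ge 2\pi$ for $m$ large, and then invokes the strict monotonicity of $F_m$ in $\t$ (the strong twist) to trap the neighbouring zeros $g_{m\pm1}(\f)$ --- which are exactly the points where $F_m=\pm 2\pi$ --- inside $[\t_m-\alpha,\t_m+\alpha]$; both neighbours are handled symmetrically by one computation. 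You instead subtract the exact defining identities $F_m(\f,g_m(\f))=0$ and $F_{m\pm1}(\f,g_{m\pm1}(\f))=0$, which cancels the $\frac23\ell\rho'(\f)$ term and yields the exact relation $\ell\bigl(1/g_{m+1}-1/g_m\bigr)=2\pi-E_m$; the upper pair then follows at once from the well-ordering $g_{m+1}<g_m$ of Lemma \ref{grafici}, while the lower pair needs your short bootstrap $g_{m-1}\le(1+\eta_m)g_m$. What your route buys is that no monotonicity/trapping step and no a priori choice of $\alpha$ are needed, and it exhibits the asymptotically sharp prefactor $2\pi/\ell$ (explaining the slack in $3\pi/\ell$); what it costs is the asymmetric treatment of the two pairs. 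Two small points to keep honest: the uniformity in $\f$ of ``$m$ sufficiently large'' must come from the uniform convergence $\sup_\f g_m\to0$ (you say this, via Lemma \ref{grafici}); and the bound $|R(\f,\t)|\le C_R\,\t$ is precisely the content of the statement $R=O(\t)$ in \eqref{Fm} (uniformly in $\f$, as the paper itself uses when it evaluates $\partial_\t R$ at intermediate points), rather than a consequence of $R$ being $C^4$ up to $\{\t=0\}$, which is not literally asserted there --- but this does not affect the argument.
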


\begin{proof}
	Fix $m\ge m_0$, $\f\in \T$, and write $\t_m=g_m(\f)$. With the notations of Lemma \ref{grafici}, we have $F_m(\f,\t_m)=0$. By the constructions of Lemma \ref{grafici}, and by the monotonicity of $F_m$ with respect to the second variable close to the boundary, it is sufficient to determine a real number $\alpha>0$ such that 
	$$
	\min\{|F_m(\f,\t_m+\alpha)|,|F_m(\f,\t_m-\alpha)|\}\ge 2\pi\ .
	$$ 
	More precisely, for $\alpha<\t_m$, by \eqref{Fm}, we can write 
	\begin{align}
			F_m&(\f,\t_m\pm\alpha)= \frac{\ell}{\t_m\pm \alpha}+\frac{2}{3}\ell \rho'(\f)+ R(\f,\t_m\pm \alpha)-2m\pi\\
			=& \frac{\ell}{\t_m}\mp\frac{\ell\alpha}{\t_m^2}\pm O\left(\ell\frac{\alpha^2}{\t_m^3}\right)+\frac{2}{3}\ell \rho'(\f)+ R(\f,\t_m)\pm\partial_\t R(\f,\widetilde \t_m)\alpha-2m\pi\\
			= & F_m(\f,\t_m)\mp\frac{\ell\alpha}{\t_m^2}\pm O\left(\ell\frac{\alpha^2}{\t_m^3} \right)\pm\partial_\t R(\f,\widetilde \t_m)\alpha\\
			=&\mp\frac{\ell\alpha}{\t_m^2}\pm O\left(\ell\frac{\alpha^2}{\t_m^3} \right)\pm\partial_\t R(\f,\widetilde \t_m)\alpha\ ,\label{dist}
	\end{align}
	where $\widetilde \t_m$ is a point on the line segment joining $\t$ to $\t\pm\alpha$, and where we have taken into account the fact that $F_m(\f,\t_m)=0$ by construction. It is clear from \eqref{dist} that, for $m$ sufficiently large, that is for sufficiently small $\t_m$, if one chooses $\alpha=\displaystyle \frac{3\pi}{\ell} \t_m^2$, then $\min\{|F_m(\f,\t_m+\alpha)|,|F_m(\f,\t_m-\alpha)|\}\ge 2\pi$, as required. 
\end{proof}

	\begin{corollary}\label{band}
	Let $C$ be an essential invariant graph of the map $\widetilde T$ having at least one point lying below the graph $\mathscr G_{2 m_0}$. Then, one has  $\mathscr G_m \cap C\neq \varnothing$ for at most one value of $m\ge 2 m_0$. In particular, there exists $m^\star=m^\star(C)\ge 2m_0$ such that $C$ is contained in the band having $\mathscr G_{m^\star-1}$ and $\mathscr G_{m^\star+1}$ as boundaries. 
\end{corollary}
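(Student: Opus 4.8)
The plan is to combine the twist-induced structure of the vertically-mapped graphs $\mathscr G_m$ with the elementary fact that an essential invariant curve $C$ of a twist map can never cross such a graph in a way that would produce two distinct rotation behaviours. Concretely, recall from Lemma \ref{grafici} that $\Pi_\f\widetilde T(\f,\t)=\f+2m\pi$ exactly on $\mathscr G_m$, that the graphs are disjoint and well-ordered, and that near the boundary the twist is negative and unbounded, so that $\Pi_\f\widetilde T(\f,\t)-\f$ is strictly decreasing in $\t$ and tends to $+\infty$ as $\t\to 0^+$. First I would argue that if $C$ is an essential $\widetilde T$-invariant graph with a point below $\mathscr G_{2m_0}$, then $C$ lies entirely in the region $\{\t<\delta\}$ where the twist analysis of Lemma \ref{grafici} is valid: indeed, an essential invariant curve that dips below $\mathscr G_{2m_0}$ must, by invariance and the fact that $\widetilde T$ maps the region between $\partial\A$ and any $\mathscr G_m$ into itself (a consequence of exact symplecticity / the intersection property together with the ordering of the $\mathscr G_m$), stay trapped in a boundary strip; this confines $C$ to the regime where the asymptotics \eqref{eq_coinexp} apply.

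Next, the core step: suppose for contradiction that $C$ meets two distinct graphs $\mathscr G_{m_1}$ and $\mathscr G_{m_2}$ with $m_0\le 2m_0\le m_1<m_2$. Pick points $p_i=(\f_i,\t_i)\in C\cap\mathscr G_{m_i}$. Since $C$ is an essential invariant graph of the exact symplectic twist map $\widetilde T$, the restriction $\widetilde T|_C$ is a circle homeomorphism (after projecting to $\T$), and hence has a well-defined rotation number $\omega(C)$; moreover, because $C$ is a graph, the displacement $\Pi_\f\widetilde T(\f,\t)-\f$ evaluated along $C$ is a continuous function of $\f$ whose values all lie in a bounded interval controlled by $\omega(C)$ — in fact, for an invariant graph the lift displacement along $C$ stays within distance $1$ (i.e. $2\pi$ on the torus) of $2\pi\omega(C)$, by the standard theory of monotone circle maps. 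But at $p_1$ the displacement is exactly $2\pi m_1$ and at $p_2$ it is exactly $2\pi m_2$, with $|m_2-m_1|\ge 1$; if $|m_2-m_1|\ge 2$ this immediately violates the boundedness, and the $|m_2-m_1|=1$ case is excluded because $\omega(C)$ would have to equal both $m_1$ and $m_2$ after rounding, which is impossible unless $C$ is a curve of exact integer rotation number passing through consecutive $\mathscr G_m$'s — but then $C$ would have to coincide with, or be squeezed between, $\mathscr G_{m_1}$ and $\mathscr G_{m_1+1}$, and by the strict monotonicity of $F_m$ in $\t$ it cannot touch both. This yields $C\cap\mathscr G_m\neq\varnothing$ for at most one value of $m\ge 2m_0$.

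Finally, to extract the conclusion, let $m^\star$ be that unique index if it exists (and otherwise the smallest $m$ such that $C$ lies below $\mathscr G_m$ — such $m$ exists because $C$ has a point below $\mathscr G_{2m_0}$ and the graphs accumulate on $\partial\A$, while $C$ is bounded away from $\partial\A$ since it is a graph over $\T$). Using the well-ordering $g_{m+1}<g_m$ and the fact that $C$ crosses none of the graphs $\mathscr G_m$ for $m\neq m^\star$, a connectedness argument forces $C$ to lie entirely in the open band bounded below by $\mathscr G_{m^\star+1}$ and above by $\mathscr G_{m^\star-1}$: if $C$ exited this band on either side it would have to cross $\mathscr G_{m^\star-1}$ or $\mathscr G_{m^\star+1}$, contradicting uniqueness of $m^\star$.

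\medskip

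The main obstacle I expect is making rigorous the claim that the lift displacement $\Pi_\f\widetilde T-\f$ along an essential invariant graph stays within $2\pi$ of $2\pi\,\omega(C)$, and in particular handling cleanly the borderline case $|m_2-m_1|=1$; this is where one must use precisely the monotone-circle-map structure of $\widetilde T|_C$ together with the strict (negative, unbounded) twist near $\partial\A$, rather than any soft argument. A secondary point needing care is the confinement step — verifying that an essential invariant curve with a point below $\mathscr G_{2m_0}$ really is trapped in the strip $\{\t<\delta\}$ — which should follow from the ordering of the $\mathscr G_m$ and the fact that $\widetilde T$ preserves the nested boundary regions they bound, but deserves an explicit sentence or two in the full write-up.
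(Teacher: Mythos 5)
Your core idea is genuinely different from the paper's proof, and it can be made to work. The paper argues in two steps: first, $C$ cannot meet the subset $\mathscr D_m\subset\mathscr G_m$ where the $\t$-coordinate changes, because such a point and its image would lie on the same vertical line (Lemma \ref{grafici}, point 2), contradicting that $C$ is a graph; second, $C$ cannot meet $\mathscr E_m=\mathscr G_m\setminus\mathscr D_m$ for two distinct values of $m$, because points of $\mathscr E_m$ have rotation number $m$ under $\widetilde T$ and the rotation number of $\widetilde T|_C$ is unique by Poincar\'e's theorem. You instead work with the displacement of the circle map induced by $\widetilde T|_C$, observing that at any point of $C\cap\mathscr G_m$ this displacement equals $2m\pi$ exactly, irrespective of what happens to the $\t$-coordinate; this handles both of the paper's cases in one stroke and is an economical alternative.

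However, as written there is a genuine gap exactly where you flag it. The bound you invoke ("the lift displacement stays within $2\pi$ of $2\pi\omega(C)$") is non-strict, so it does not exclude $|m_2-m_1|=1$, and your fallback for that case ("$C$ would have to coincide with, or be squeezed between, $\mathscr G_{m_1}$ and $\mathscr G_{m_1+1}$, and by the strict monotonicity of $F_m$ in $\t$ it cannot touch both") is not a proof: monotonicity of $F_m$ in $\t$ at fixed $\f$ says nothing about $C$ touching two different graphs at two different abscissas. The correct fact to use is sharper and elementary: if $F(\f)=\Pi_\f\widetilde T(\f,\xi(\f))$ is the lift of the orientation-preserving circle homeomorphism induced on $C=\mathrm{graph}(\xi)$, then $F(\f)-\f$ is continuous, $2\pi$-periodic, and its oscillation is \emph{strictly} less than $2\pi$ (if it took values $a$ at $\f_1$ and $b\ge a+2\pi$ at $\f_2$, translate $\f_1$ by a multiple of $2\pi$ so that $\f_2<\f_1<\f_2+2\pi$ and strict monotonicity of $F$ gives $\f_1>\f_2+2\pi$, a contradiction). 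Since the displacement equals $2\pi m_1$ and $2\pi m_2$ at your two intersection points, this forces $m_1=m_2$ with no case distinction. Separately, your preliminary "confinement" claim --- that $\widetilde T$ maps the region between $\partial\A$ and $\mathscr G_m$ into itself --- is unjustified and should be dropped; it is also unnecessary, since the at-most-one-intersection argument uses nothing about where $C$ lies, and the band conclusion then follows, as in the paper, from connectedness of $C$, the well-ordering of the graphs, and the fact that a continuous graph over $\T$ is bounded away from $\partial\A$.
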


\begin{proof}
For any $m\ge 2 m_0$, we define the sets  
\begin{align}\label{Em}
	\begin{split}
	\mathscr{D}_m:=&\{(\varphi,\theta)\in \mathscr G_m\ |\ \ \Pi_\t \widetilde  T(\varphi,\theta)\neq \theta\}\\
	\mathscr E_m:=&\{(\varphi,\theta)\in \mathscr G_m\ |\ \Pi_\t \widetilde  T(\varphi,\theta)=\theta \}= \mathscr G_m\setminus \mathscr D_m\ .
	\end{split} 
\end{align}
We firstly claim that $\mathscr D_m\cap C=\varnothing$ for any value of $m\ge 2m_0$. In fact, given $m\ge 2m_0$, if $\mathscr D_m=\varnothing$ there is nothing to prove. Otherwise, if $\mathscr D_m\neq \varnothing$ and $\mathscr D_m\cap C\neq \varnothing$ then the image $T(\f_\star,\t_\star)$ of any point $(\f_\star,\t_\star)\in \mathscr D_m\cap C$ would satisfy $\Pi_\t T(\f_\star,\t_\star)\neq \t_\star$ by definition of $\mathscr D_m$, and $T(\f_\star,\t_\star)\in C$ by invariance of $C$, but also $\Pi_\f T(\f_\star,\t_\star)=\f_\star$ by the second point of Lemma \ref{grafici}, in contradiction with the hypothesis that $C$ is the graph of a function. 

By the previous claim, for any $m\ge 2 m_0$ one has $\mathscr{G}_m\cap C=\mathscr{E}_m\cap C$. Then, if $\mathscr{E}_m\cap C\neq \varnothing$ for at most one value of $m\ge 2 m_0$, the proof ends here by taking into account the fact that the vertically-mapped graphs of Lemma \ref{grafici} are well-ordered. Now, assume that there exist two different values $m_3,m_4\ge 2m_0$, such that $\mathscr{E}_{m_3}\cap C\neq \varnothing$ and $\mathscr{E}_{m_4}\cap C\neq \varnothing$. By definition \eqref{Em}, for any $m\ge m_0$ points in $\mathscr E_m$ have rotation number $m$ under $\widetilde T$, so that points in $\mathscr{E}_{m_3}\cap C$ and in $\mathscr{E}_{m_4}\cap C$ have rotation number $m_3$ and $m_4\neq m_3$, respectively. However, $C$ is invariant under $\widetilde T$, so that its points must have a unique rotation number due to Poincaré's theorem, which leads to a contradiction. This concludes the proof.

\end{proof}

\begin{corollary}\label{sanremo}
	Given an integer $m\ge 2m_0$, and a point $\f\in \T$, consider any pair of points $(\f,\t_1)$ and $(\f,\t_2)$ in the band whose boundary is given by $\mathscr G_{m-1}$ and $\mathscr G_{m+1}$. Then one has 
	\begin{equation}
		|\Pi_\f\widetilde T(\f,\t_2)-\Pi_\f\widetilde T(\f,\t_1)|\le 4\pi\ .
	\end{equation}
\end{corollary}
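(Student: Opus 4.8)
The plan is to bound the horizontal displacement of $\widetilde T$ over the band $\{\mathscr G_{m-1}, \mathscr G_{m+1}\}$ at a fixed abscissa $\f$ by comparing it with the displacement along the two vertically-mapped graphs that form (a slight enlargement of) its boundary, and then invoking Corollary \ref{width}. First I would observe that by construction (the second point of Lemma \ref{grafici}), every point of $\mathscr G_k$ at abscissa $\f$ — namely $(\f, g_k(\f))$ — satisfies $\Pi_\f \widetilde T(\f, g_k(\f)) = \f + 2k\pi$. Hence along the vertical segment $\{\f\}\times[g_{m+1}(\f), g_{m-1}(\f)]$, the function $\t \mapsto \Pi_\f \widetilde T(\f,\t)$ takes the value $\f + 2(m+1)\pi$ at the lower endpoint and $\f + 2(m-1)\pi$ at the upper endpoint (recall the graphs are well-ordered with $g_{m+1} < g_m < g_{m-1}$, and the twist near the boundary is negative, so $\Pi_\f \widetilde T(\f,\cdot)$ is monotone decreasing in $\t$ there).

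Next, since $(\f,\t_1)$ and $(\f,\t_2)$ both lie in the closed band, both $\t_1$ and $\t_2$ belong to $[g_{m+1}(\f), g_{m-1}(\f)]$. By the monotonicity of $\Pi_\f\widetilde T(\f,\cdot)$ just noted, both $\Pi_\f\widetilde T(\f,\t_1)$ and $\Pi_\f\widetilde T(\f,\t_2)$ lie in the interval $[\f + 2(m-1)\pi, \f + 2(m+1)\pi]$, whose length is $4\pi$. Therefore
\[
|\Pi_\f\widetilde T(\f,\t_2) - \Pi_\f\widetilde T(\f,\t_1)| \le 4\pi,
\]
which is exactly the claim. (If one prefers not to rely on strict monotonicity on the whole band but only on the twist inside the thin strips guaranteed by Lemma \ref{grafici}, one can instead run the estimate \eqref{dist} from Corollary \ref{width} directly: for $\t$ in the band, $\Pi_\f\widetilde T(\f,\t) - \f - 2m\pi = \ell/\t + \tfrac23 \ell \rho'(\f) + R(\f,\t) - 2m\pi$, and the difference between its values at $\t_1$ and $\t_2$ is controlled, via a mean value argument and the width bound $|g_{m\pm1}(\f) - g_m(\f)| \le \frac{3\pi}{\ell}(g_m(\f))^2$, by a quantity that is at most $4\pi$ for $m$ large.)

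The only mild subtlety — the part that deserves care rather than being genuinely hard — is making sure that the interval $[g_{m+1}(\f), g_{m-1}(\f)]$ lies entirely within the region where the expansion \eqref{eq_coinexp} is valid and the twist $\partial_\t(\Pi_\f\widetilde T) = -\ell/\t^2 + O(1)$ has a definite (negative) sign, so that the monotonicity argument goes through; this is ensured for $m \ge 2m_0$ with $m_0$ as in Lemma \ref{grafici}, possibly after enlarging $m_0$. Everything else is a direct consequence of the labelling of the graphs $\mathscr G_m$ by their rotation number $m$ and the well-ordering established in Lemma \ref{grafici}.
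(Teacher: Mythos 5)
Your proposal is correct and follows essentially the same route as the paper: the negative twist near the boundary together with the fact that $\Pi_\f\widetilde T(\f,g_k(\f))=\f+2k\pi$ pins the horizontal displacement of every point of the band between $2(m-1)\pi$ and $2(m+1)\pi$, so any two such displacements differ by at most $4\pi$. The paper's proof is just a compressed version of this same argument, so no further comment is needed.
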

\begin{proof}
	It follows immediately from the constructions of Lemma \ref{grafici}: by the twist condition the abscissa of all points below $\mathscr G_{m-1}$ (respectively, above $\mathscr G_{m+1}$) increases by a quantity larger than $2(m-1)\pi$ (respectively, smaller than $2(m+1)\pi$) under one iteration of $\widetilde T$. 
\end{proof}

A direct consequence of the previous corollaries is the following 

\begin{lemma}\label{lemma_herman}
	There is a constant $c_0=c_0(\Gamma,\ell)>0$ with the following property. If $C$ is the graph of an essential invariant curve of $T$ lying sufficiently close to the boundary then, setting $\t_0 = (\t_+ + \t_-)/2$ where $\t_{\pm}$ are the maximal and minimal values of $\t$ over all $(\f, \t) \in C$, one has $C \subset \T \times \left( \t_0-c_0 \t_0^2, \t_0 + c_0 \t_0^2\right)$. 
	
\end{lemma}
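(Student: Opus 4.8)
The plan is to deduce Lemma \ref{lemma_herman} directly from the three corollaries above. Let $C$ be the graph of an essential invariant curve of $T$ lying sufficiently close to the boundary, so that in particular $C$ has at least one point below $\mathscr G_{2m_0}$. By Corollary \ref{band} there is an integer $m^\star = m^\star(C) \ge 2m_0$ such that $C$ is contained in the band bounded by $\mathscr G_{m^\star - 1}$ and $\mathscr G_{m^\star + 1}$. First I would use this containment together with the uniform accumulation of the graphs $\mathscr G_m$ on the boundary (point 1 of Lemma \ref{grafici}) and the width estimate of Corollary \ref{width} to show that the two extreme values $\t_+$ and $\t_-$ of $\t$ on $C$, as well as $\t_0 = (\t_+ + \t_-)/2$, are all comparable to $g_{m^\star}(\f)$ uniformly in $\f$: indeed $g_{m^\star+1}(\f) \le \t \le g_{m^\star-1}(\f)$ for every $(\f,\t) \in C$, and by Corollary \ref{width} one has $g_{m^\star-1}(\f) - g_{m^\star+1}(\f) \le \frac{6\pi}{\ell}(g_{m^\star}(\f))^2$, which is $o(g_{m^\star}(\f))$ as $m^\star \to \infty$. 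Hence $\t_\pm = g_{m^\star}(\f_\pm)(1 + o(1))$ for the points $\f_\pm$ where the extrema are attained, and since $g_{m^\star}$ itself has bounded oscillation relative to its size (again by Corollary \ref{width}, comparing $g_{m^\star}$ at two different points via the telescoping of consecutive gaps — or more directly, from point 3 of Lemma \ref{grafici} the logarithmic derivative $|g_{m^\star}'/g_{m^\star}|$ is $O(g_{m^\star})$, so $g_{m^\star}$ is almost constant), all of $\t_+$, $\t_-$, $\t_0$ and every value of $\t$ on $C$ are equal to $\t_0(1 + O(\t_0))$.

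The key remaining step is to control the vertical extent of $C$ around $\t_0$ — that is, to bound $\t_+ - \t_-$ by $O(\t_0^2)$. This is where I expect the main work to lie, and the natural tool is Herman's bound on the Lipschitz constant of an invariant graph of a twist map, combined with the strong twist near the boundary that is quantified by Corollary \ref{sanremo}. Concretely, by Herman's theorem (as referenced in the introduction, \cite{herman1983courbes}) an essential invariant curve $C$ of the exact symplectic twist map $T$ is the graph of a Lipschitz function $\f \mapsto u(\f)$, and its Lipschitz constant is controlled by the twist of $T$ along $C$: roughly, $\mathrm{Lip}(u) \le \sup_C |\partial \bar\f / \partial \t|$-type quantities, or dually by the variation of the abscissa. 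Corollary \ref{sanremo} says exactly that, within the band containing $C$, the total variation of $\Pi_\f \widetilde T(\f, \cdot)$ over the vertical fibre is at most $4\pi$. On the other hand, from the expansion \eqref{eq_coinexp} the twist $\partial \bar\f / \partial \t$ is of order $-\ell/\t^2 \approx -\ell/\t_0^2$, which is enormous when $\t_0$ is small. Pairing "the horizontal image of the fibre varies by at most $4\pi$" with "the vertical derivative of that horizontal image has size $\approx \ell/\t_0^2$" forces the fibre portion actually occupied by $C$ to have vertical length at most $\approx 4\pi \t_0^2/\ell$, i.e. $\t_+ - \t_- = O(\t_0^2)$ with a constant depending on $\Gamma$ and $\ell$. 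Feeding this back, $C \subset \T \times (\t_0 - c_0\t_0^2, \t_0 + c_0\t_0^2)$ for a suitable $c_0 = c_0(\Gamma,\ell)$.

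To make the middle argument rigorous I would avoid differentiating $u$ directly and instead argue on the image: for two points $(\f,\t_-)$, $(\f,\t_+)$ one would like to compare $\Pi_\f \widetilde T(\f,\t_-)$ and $\Pi_\f \widetilde T(\f,\t_+)$; these are $\le 4\pi$ apart by Corollary \ref{sanremo}, while the mean value theorem applied to $\t \mapsto \Pi_\f \widetilde T(\f,\t)$ together with the twist lower bound $|\partial_\t \Pi_\f \widetilde T| \ge \frac{\ell}{2\t_0^2}$ (valid on the band for $m^\star$ large, by \eqref{eq_coinexp}) gives $|\Pi_\f \widetilde T(\f,\t_+) - \Pi_\f \widetilde T(\f,\t_-)| \ge \frac{\ell}{2\t_0^2}|\t_+ - \t_-|$. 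Hence $|\t_+ - \t_-| \le 8\pi \t_0^2/\ell$. A small subtlety is that $\t_+$ and $\t_-$ as defined in the statement are the global sup and inf of $\t$ over $C$, attained perhaps at different abscissae $\f_+ \ne \f_-$, so one must either invoke Herman's result that $C$ is a Lipschitz graph and slide along it, or observe that since $C$ lies in the band bounded by $\mathscr G_{m^\star \pm 1}$, every fibre of $C$ already lies in a vertical window of size $O(\t_0^2)$, so the estimate localised to a single fibre already suffices to bound the global oscillation up to the band width $\frac{6\pi}{\ell}\t_0^2$, which is of the same order. Collecting the fibrewise bound and the band width into one constant $c_0(\Gamma,\ell)$ completes the proof. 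The main obstacle, then, is purely bookkeeping: making sure the various "$O(\t_0^2)$" contributions — the band width from Corollary \ref{width}, the fibrewise spread from Corollary \ref{sanremo} plus the twist lower bound, and the oscillation of $g_{m^\star}$ itself — are all absorbed into a single constant, and that "sufficiently close to the boundary" is interpreted as "$m^\star(C)$ large enough" so that all the asymptotic error terms in \eqref{eq_coinexp} are under control.
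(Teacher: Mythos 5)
Your proposal is correct in substance, and it rests on the same three pillars as the paper's proof (the band containment of Corollary \ref{band}, the width bound of Corollary \ref{width}, and the derivative bound in part 3 of Lemma \ref{grafici}); the genuine difference is in how the band is calibrated against $\theta_0$. The paper bounds $\mathrm{osc}(\xi)$ by the band width plus the oscillation of $g_{m^\star-1}$, both of order $\bigl(\max_x g_{m^\star-1}(x)\bigr)^2$, and then obtains $\max_x g_{m^\star-1}(x)\le 2\theta_0+\frac{10\pi}{\ell}\theta_0^2$ \emph{dynamically}: at the abscissa $x_\star$ maximising $g_{m^\star-1}$ it pushes the point of $C$ upward by $\alpha=5\pi\,\xi(x_\star)^2/\ell$ and uses the twist computation \eqref{var ascissa} together with Corollary \ref{sanremo} to conclude that the shifted point exits the band, whence $g_{m^\star-1}(x_\star)<\xi(x_\star)+\alpha\le\theta_++\frac{5\pi}{\ell}\theta_+^2$. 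Your first paragraph replaces this by a purely static calibration: since $C$ runs between $\mathscr G_{m^\star+1}$ and $\mathscr G_{m^\star-1}$ and their gap at each abscissa is at most $\frac{6\pi}{\ell}g_{m^\star}(\varphi)^2$, one gets $g_{m^\star-1}(\varphi)\le\xi(\varphi)+\frac{6\pi}{\ell}M^2\le\theta_++\frac{6\pi}{\ell}M^2$ for every $\varphi$, where $M:=\max_x g_{m^\star-1}(x)$; together with the near-constancy of $g_{m^\star-1}$ coming from $|g'_{m^\star-1}|\le\frac83|\rho''|\,g_{m^\star-1}^2$, this self-improving inequality gives $M=O(\theta_0)$ once $C$ is close enough to the boundary (so that $M$ is small, by the uniform accumulation of the graphs), and then $\theta_+-\theta_-\le\mathrm{osc}(g_{m^\star-1})+\text{band width}=O(\theta_0^2)$ with a constant depending only on $\ell$ and $\max_\T|\rho''|$. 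Made quantitative, your first paragraph alone is therefore a complete and arguably more elementary proof; the paper's dynamical step buys a clean explicit constant without the bootstrapping, but nothing essential.

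Two caveats on your middle paragraphs. The fibrewise argument (Corollary \ref{sanremo} plus a mean-value/twist lower bound on $\theta\mapsto\Pi_\varphi\widetilde T(\varphi,\theta)$) does not by itself bound $\theta_+-\theta_-$: $C$ is a graph, so no two of its points lie on the same fibre, and what that argument actually estimates is the width of the band, i.e.\ a re-derivation of Corollary \ref{width}; once you add the patch that every fibre of the band has size $O(\theta_0^2)$, the entire weight of the proof is carried by the static calibration of your first paragraph and the fibrewise step becomes redundant. Second, be careful with invoking ``Herman's bound on the Lipschitz constant'': in the paper the quantitative Lipschitz estimate is Lemma \ref{lemma_Lipschitz}, which is proved \emph{after}, and using, the present lemma, so appealing to it here would be circular. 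Only the soft Birkhoff--Herman fact that an essential invariant curve is the graph of some Lipschitz function may be used at this stage, and your argument in fact needs nothing more than that.
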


\begin{proof}
	Without any loss of generality, we can suppose that $C$ satisfies the hypotheses of Corollary \ref{band}. Then, calling $\xi: \T \rightarrow (0,\pi)$ the Lipschitz curve of which $C$ is the graph, and denoting by $m^\star=m^\star(\xi)$ the value defined in Corollary \ref{band}, we have that 
	\begin{equation}\label{ordinata_curva}
		|\xi(\f_2)-\xi(\f_1)|\le  |g_{m^\star-1}(\f_2)-g_{m^\star+1}(\f_1)| \qquad \forall \ \f_1,\f_2 \in \T,
	\end{equation}
	as $C$ must be contained in the band having as boundaries the graphs of $g_{m^\star+1}$ and $g_{m^\star-1}$. We estimate the right hand side of \eqref{ordinata_curva} by giving a bound to the terms $|g_{m^\star-1}(\f_1)-g_{m^\star+1}(\f_1)|$ and $|g_{m^\star-1}(\f_2)-g_{m^\star-1}(\f_1)|$ separately. We have 
	\begin{equation}\label{ho sete}
	|g_{m^\star-1}(\f_1)-g_{m^\star+1}(\f_1)| \le \frac{6\pi}{\ell} (g_{m^\star}(\f))^2	\le \frac{6\pi}{\ell} (g_{m^\star-1}(\f))^2
	\end{equation} 
	due to Corollary \ref{width} and to the fact that $g_m(\f)<g_{m+1}(\f)$ for any $\f\in \R$ and $m\ge m_0$ by Lemma \ref{grafici}. We also observe that, being $g_{m^\star-1}$ regular enough and $2\pi$-periodic, one has 
	\begin{align}\label{ho sonno}
		\begin{split}
		|g_{m^\star-1}(\f_2)-g_{m^\star-1}(\f_1)|\le & \max_{x\in \T}|g'_{m^\star-1}(x)|\ |\f_2-\f_1|\\
		 \le & \frac{16\pi}{3} \max_{x\in \T}\left\{|\rho''(x)|\,|g_{m^\star-1}(x)|^2\right\} \ ,
		\end{split}
	\end{align}
	where the last inequality comes from the third part of Lemma \ref{grafici}. Then, by choosing a point $x_\star$ at which $\max_{x\in \T}|g_{m^\star-1}(x)|=|g_{m^\star-1}(x_\star)|$, we have that, for any $0<\alpha<\xi(x_\star)$
	\begin{align}\label{var ascissa}
		\begin{split}
			&\Pi_\f \widetilde T(x_\star,\xi(x_\star)+\alpha)-\Pi_\f \widetilde T(x_\star,\xi(x_\star))\\
			&\stackrel{\eqref{eq_coinexp}}{=} \frac{\ell}{\xi(x_\star)+\alpha}+ R(x_\star,\xi(x_\star)+\alpha)-\frac{\ell}{\xi(x_\star)}- R(x_\star,\xi(x_\star))\\
			& = -\frac{\ell \alpha}{\xi(x_\star)^2}+O\left(\frac{\ell\alpha^2}{\xi(x_\star)^3}\right)+ \partial_\t R(x_\star, \widetilde \xi)\alpha \ ,
			\end{split}
	\end{align}
	where $\widetilde \xi$ is a point on the segment relying $\xi(x_\star)$ to $\xi(x_\star)+\alpha$. By choosing the value $\alpha= 5\pi \xi(x_\star)^2/\ell$ in formula \eqref{var ascissa}, we have that - if $C:=\text{graph}(\xi)$ is sufficiently close to the boundary - then
	\begin{equation}\label{girogirotondo}
		\Pi_\f \widetilde T(x_\star,\xi(x_\star)+\alpha)<\Pi_\f \widetilde T(x_\star,\xi(x_\star))- 4\pi\ .
	\end{equation}
	Taking Lemma \ref{sanremo} and the twist condition into account, estimate \eqref{girogirotondo} means that the point $\xi(x_\star)+\alpha$ lies above $\mathscr G_{m^\star-1}:=\text{graph}(g_{m^\star-1})$, so that 
	\begin{align}
		\begin{split}
		|g_{m^\star-1}(x_\star)|=\max_{x\in \T}|g_{m^\star-1}(x)|  < \xi(x_\star)+\alpha
		= \xi(x_\star)+\frac{5\pi}{\ell}\xi(x_\star)^2
		\le \t_+ +\frac{5\pi}{\ell}\t_+^2 \ ,
	\end{split}
	\end{align}
	and, as $\t_+=2(\t_0-\t_-)\le 2\t_0$, we finally obtain 
	\begin{equation}\label{ho fame}
		\max_{x\in \T}|g_{m^\star-1}(x)|\le 2 \t_0 +\frac{10\pi}{\ell}\t_0^2\ .
	\end{equation}
	The proof follows by plugging estimate \eqref{ho fame} into formulas \eqref{ho sete} and \eqref{ho sonno}, and by taking expression \eqref{ordinata_curva} into account. 
\end{proof}

\section{Drifting Trajectories when $\ell \gg 0$}\label{sec_standardmapmather}

The following theorem is the main result of this section. It is the precise statement of the result described by Theorem \ref{theorem_b}.

\begin{theorem}\label{theorem_nocurves}
Let $\Gamma \subset \R^2$ be a compact and strictly convex billiard domain, such that $\partial \Gamma$ is a closed $C^5$ plane curve of length $2 \pi$. Suppose that $\partial \Gamma$ is not a circle. Define 
\begin{equation}\label{eq_chaoscondition}
\ell_0 = - \frac{3}{\min_{\f \in \T} \rho''(\f)} >0. 
\end{equation}
Then for any $\ell > \ell_0$ and $\delta \ll \ell - \ell_0$ the coin mapping $T$ on the coin $\C \left(\G, \ell \right)$ has no essential invariant curves in the region $\T \times (0, \delta)$ of the phase space $\A$. 
\end{theorem}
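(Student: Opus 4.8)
The idea is to work in the "vertically-mapped" coordinates and show that, when $\ell > \ell_0$, any candidate invariant curve near the boundary would be forced to be so steep that it could not be a graph — contradicting Birkhoff's theorem that essential invariant curves of twist maps are graphs of Lipschitz functions. Concretely, I would argue by contradiction: suppose $C$ is an essential invariant curve of $\widetilde T$ inside $\T\times(0,\delta)$. By Corollary~\ref{band} we may assume $C$ lies in the thin band bounded by $\mathscr G_{m^\star-1}$ and $\mathscr G_{m^\star+1}$, and by Lemma~\ref{lemma_herman} we have $C\subset\T\times(\t_0-c_0\t_0^2,\t_0+c_0\t_0^2)$ for $\t_0$ the midpoint of the $\t$-range of $C$. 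So $C$ is trapped in a band of width $O(\t_0^2)$. The strategy is then to compare this width to the vertical displacement forced by the dynamics: I would estimate $\Pi_\t\widetilde T(\f,\t) - \t$ using the expansion \eqref{eq_coinexp}, namely $\bar\t - \t = -\tfrac23\rho'(\f)\t^2 + O(\t^3)$, and the first variation of this in $\f$.

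\textbf{The mechanism.} Here is where the condition \eqref{eq_chaoscondition}, $\ell_0 = -3/\min_\f\rho''(\f)$, enters. Pick the point $\f_*$ realising $\min_\f \rho''(\f)$ (this is strictly negative since $\partial\Gamma$ is not a circle, as $\int_{\T}\rho'' = 0$ and $\rho''\not\equiv 0$). I want to compare the $\t$-coordinates of $\widetilde T$ applied to the two "endpoints" of $C$ sitting over $\f_*$ — really, I want to track how the billiard pushes the curve vertically and then how the shift $T_2$ converts the resulting horizontal spread back. The clean route: consider the images of $C$ under $\widetilde T$ restricted to a short $\f$-interval around $\f_*$. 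On one hand, by Corollary~\ref{sanremo} the horizontal image of $C$ over any fixed $\f$ has oscillation $\le 4\pi$; on the other hand, differentiating $\Pi_\f\widetilde T = \f + \ell/\bar\t + \tfrac23\ell\rho'(\f) + R$ with respect to $\f$ along $C$ and using that $\bar\t$ itself varies with $\f$ through $\bar\t = \t - \tfrac23\rho'(\f)\t^2+\dots$, one finds that the derivative of $\Pi_\f\widetilde T|_C$ picks up a term $\propto \ell\rho''(\f)\t_0 + \ell\cdot(\text{contribution of }\xi') / \t_0^2 \cdot(\text{the }\t\text{-variation})$. The sign and size of the $\rho''$ term, weighted by $\ell$, is what must be beaten; requiring $\ell > -3/\min\rho''$ makes this term dominate and produces a horizontal spread exceeding $4\pi$ over the band, contradicting Corollary~\ref{sanremo} (equivalently, the Lipschitz graph property). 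In other words, the curve $C$ would have to "overtake itself" — the unbounded twist amplifies the $O(\t_0^2)$ vertical band into an $O(1)$ horizontal spread precisely when $\ell\min\rho'' < -3$.

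\textbf{Generating-function / Mather variant.} Since the paper advertises a variational proof via Mather's glancing-billiards technique \cite{mather1982glancing}, the alternative route I would carry out is: use the generating function $h(\f,\bar\f)$ from part~\ref{item_genfunc3} of Lemma~\ref{lemma_generatingfunctions}, valid in $D=\T\times(0,\delta)$; expand $h$ near $\partial\A$ to identify the coin map there as a small perturbation of a generalised standard map $\bar\f + \underline\f = 2\f + V'(\f) + (\text{large shift term})$; then, following Mather, show that the "action" of a would-be invariant curve is not minimised — one constructs a variation (a finite orbit segment that crosses from one side of the putative curve to the other and back) whose action is strictly less than that of any orbit on the curve, using that $V'' = \tfrac{\partial}{\partial\f}(\tfrac23\rho'\cdot(\cdots))$ has a point where it is sufficiently negative once scaled by $\ell$. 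The non-minimisation of the action obstructs the curve, since invariant curves of exact twist maps carry action-minimising orbits (Mather–Aubry theory). The threshold $\ell_0$ comes out of requiring the negative part of $V''$, amplified by $\ell$, to overcome the positive definiteness coming from the twist in the generalised standard map — this is the standard Mather destruction-of-curves criterion, and it matches \eqref{eq_chaoscondition}.

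\textbf{Main obstacle.} The delicate point in either route is bookkeeping the error terms in the expansion \eqref{eq_coinexp} uniformly as $\t\to 0$: the term $R(\f,\t)=O(\t)$ and its $\f$-derivative must be controlled, and one must verify that differentiating along the curve $C$ (which is only Lipschitz a priori, though by Lemma~\ref{grafici} trapped between $C^4$ graphs with controlled slopes $|g_m'|\le \tfrac83\rho''|g_m|^2$) does not introduce uncontrolled terms — this is exactly why $C^5$ regularity of $\partial\Gamma$ is assumed, so that $\rho''$ and the error terms behave well in the limit. Getting the constant in \eqref{eq_chaoscondition} exactly right (rather than merely up to an uncontrolled factor) requires being careful that the leading-order computation of $\partial_\f(\Pi_\t\widetilde T)$ along $C$ really is $-\tfrac23\ell\rho''(\f)\t_0^2 + o(\t_0^2)$ and comparing against the band width $c_0\t_0^2$ from Lemma~\ref{lemma_herman} together with the twist rate $\ell/\t_0^2$, so that the products telescope to the clean inequality $\ell\min\rho'' < -3$. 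I expect the rest (choosing $\delta\ll\ell-\ell_0$, invoking Birkhoff's graph theorem, and assembling the contradiction) to be routine given Lemmas~\ref{grafici}–\ref{lemma_herman} and their corollaries.
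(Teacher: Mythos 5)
Your overall strategy is the right one — in fact your two routes correspond to the paper's two proofs (a Herman/Lipschitz argument and a Mather-style generating-function argument) — but both are left at the level of a plan, and the first contains a step that fails as stated. In the "mechanism" route, the contradiction you aim for is wrong: Corollary \ref{sanremo} bounds the horizontal displacement of images of two points \emph{with the same abscissa} inside the band between $\mathscr G_{m-1}$ and $\mathscr G_{m+1}$; it says nothing about the image of the curve over a $\f$-interval, and in any case the forward dynamics alone can never produce a contradiction, for any $\ell$. Differentiating $\Pi_\f\widetilde T$ along the graph $\xi$ gives (up to errors) $1+\frac23\ell\rho''(\f)-\ell\,\xi'(\f)/\t_0^2$, and order preservation of $\widetilde T|_C$ near the minimiser $\f_*$ of $\rho''$ only forces $\xi'(\f_*)\le\bigl(\frac1\ell+\frac23\rho''(\f_*)\bigr)\t_0^2$, i.e. the curve must slope downwards there at rate $O(\t_0^2)$ — perfectly compatible with a $2\pi$-periodic graph (it can climb back elsewhere), so no contradiction and no threshold. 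The constant in \eqref{eq_chaoscondition} only appears when you \emph{also} use order preservation of $\widetilde T^{-1}|_C$: the entry $\ell/\t^2$ of the inverse Jacobian forces the complementary bound $\xi'\gtrsim-\t_0^2/\ell$ (this is the $L<0$ half of Lemma \ref{lemma_Lipschitz}), and feeding that back into the forward condition yields $2+\frac23\ell\rho''(\f_*)<0$, i.e. $\ell>\ell_0$, which is exactly how the paper's second proof closes. Your sketch never invokes the inverse map, which is why — as you concede in your "main obstacle" paragraph — you cannot actually produce the constant $3$.

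The generating-function route is essentially the paper's first proof, but the decisive computation is only promised, not done. What is needed is: centre the strip at $\e=\t_0$ determined by the hypothetical curve (Lemma \ref{lemma_herman} legitimises the rescaling $\t=\e-\e^2y$ with $y\in[-c_0,c_0]$); expand the true generating function of Lemma \ref{lemma_generatingfunctions} as $h=(1-\e^2/2)(x-\bar x)+\e^3h_0+O(\e^4)$ with $h_0$ quadratic plus $-\frac23\rho(x)$; and evaluate the three-point quantity $\partial_2^2h(x_0,x_1)+\partial_1^2h(x_1,x_2)=\e^3\bigl(-\tfrac2\ell-\tfrac23\rho''(x_1)\bigr)+O(\e^4)$ on an actual orbit segment of the curve with $x_1$ the minimiser of $\rho''$; the sign change of this bracket at $\ell=\ell_0$, combined with the implicit function theorem applied to $H(x_0,x_1,x_2)=\partial_2h(x_0,x_1)+\partial_1h(x_1,x_2)=0$, shows $T|_C$ would reverse orientation — Mather's criterion in the form used for glancing billiards. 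Note also that the variation you describe (a finite orbit segment crossing the curve and coming back) is not the mechanism: only the middle point of a three-point configuration is varied, and the obstruction is the sign of the second variation (equivalently, monotonicity of the implicitly defined middle point). Without either the inverse-map step in route one or this explicit second-derivative computation in route two, the proposal does not yet prove the theorem, and in particular does not justify the precise threshold $\ell_0=-3/\min_\f\rho''(\f)$.
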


The rest of this section is dedicated to the proof of this theorem. In fact, we give two proofs of the theorem, using different methods. The first proof is an adaptation of an argument of Mather using generating functions \cite{mather1982glancing}, while the second analyses the Lipschitz bounds of any hypothetical invariant essential curves. For simplicity, by an abuse of notation we will often denote by $T$ the lift of $T$ to the universal cover $\R \times (0, \pi)$ of $\A$; the distinction between the two will be made clear from the context. 

\begin{proof}[First proof of Theorem \ref{theorem_nocurves}]
Suppose now that $T$ has an essential invariant curve $C$ near the boundary. The form \eqref{eq_coinexp} of the map implies that $T|_{C}$ is orientation-preserving. 

Let $c_0>0$ be the constant from Lemma \ref{lemma_herman}, and let $\e = (\t_+ + \t_-)/2$ where $\t_{\pm}$ are the maximal and minimal values of $\t$ over all $(\f, \t) \in C$. In the strip $\T \times \left[ \e-c_0 \e^2, \e + c_0 \e^2 \right] \subset \A$ we make the coordinate transformation $: (x,y) \mapsto (\f, \t)$ where 
\[
\f = x, \quad \t = \e - \e^2 y 
\]
so that $x \in \T$ and $y \in [-c_0, c_0]$. From \eqref{eq_coinexp} the map takes the form $(\bar x, \bar y) = T(x,y)$ where
\[
\bar x = x + \omega + \ell \bar y + O (  \e), \quad \bar y = y + \frac{2}{3} \rho'(x) + O (\e)
\]
where we have defined $\omega = \displaystyle\frac{\ell}{\e} \mod 2 \pi$. If we remove the terms of order $\e$ we obtain the map
\[
T_0 : \begin{dcases}
\bar x = x + \omega + \ell \bar y \\
\bar y = y + \frac{2}{3} \rho'(x). 
\end{dcases}
\]
Rearranging these equations, we obtain $\bar y = \ell^{-1} (\bar x - x - \omega)$ and $y = \bar y - \frac{2}{3} \rho'(x)$. Therefore the function 
\[
h_0(x, \bar x) = \ell^{-1} \left(x \bar x - \frac{\bar x^2}{2} - \frac{x^2}{2} + \omega (\bar x - x) \right) - \frac{2}{3} \rho (x)
\]
which is $C^3$ because $\partial \Gamma$ is $C^5$, is such that $T_0(x,y)=(\bar x, \bar y)$ if and only if 
\begin{equation}\label{xxbar}
\partial_1 h_0 (x, \bar x) = y, \quad \partial_2 h_0 (x, \bar x) = - \bar y.
\end{equation} 
The second order partial derivatives of $h_0$ are
\begin{equation}\label{eq_h02ndderivs}
\partial_1^2 h_0(x,\bar x) = - \ell^{-1} - \frac{2}{3} \rho''(x), \quad \partial_2^2 h_0(x,\bar x) = - \ell^{-1}, \quad \partial_2 \partial_1 h_0(x,\bar x) = \ell^{-1}, 
\end{equation}
which we will use below. 

In what follows we want to use the function $h_0$ to describe trajectories of $T$ at first order, but $T$ is not exact symplectic with respect to the 1-form $y \, dx$, and so does not admit a generating function of this type. Lemma \ref{lemma_generatingfunctions}, however, implies that there is a function $h(x, \bar x)$ such that $T(x,y) = (\bar x, \bar y)$ if and only if
\begin{equation}\label{eq_cosgeneps}
\partial_1 h (x, \bar x) = \cos \left( \e - \e^2y \right), \quad \partial_2 h (x, \bar x) = -\cos \left( \e - \e^2 \bar y \right).
\end{equation}
Since $\cos \left( \e - \e^2y \right) = \left(1 - \frac{\e^2}{2} \right) + \e^3 y + O \left( \e^4 \right)$, and $y=\partial_1h_0(x,\bar x)$ at first order in $\e$ due to \eqref{xxbar}, it can be seen that the generating function $h$, up to additive constants, takes the form
\[
h ( x, \bar x) = \left( 1 - \frac{\e^2}{2} \right) (x - \bar x) + \e^3 \,  h_0 (x, \bar x) + O \left( \e^4 \right). 
\]

Now, we define the function 
\[
H(x_0, x_1, x_2) = \partial_2 h (x_0, x_1) + \partial_1 h (x_1, x_2). 
\]
Using \eqref{eq_h02ndderivs} we can compute the first derivatives of $H$:
\begin{align}
\frac{\partial H}{\partial x_0} ={}& \partial_1 \partial_2 h (x_0, x_1) = \e^3 \partial_1 \partial_2 h_0 (x_0, x_1) + O \left( \e^4 \right) = \e^3 \ell^{-1} + O \left( \e^4 \right) \\
\frac{\partial H}{\partial x_1} ={}& \partial_2^2 h(x_0,x_1) + \partial_1^2 h(x_1,x_2) = \e^3 \left[ \partial_2^2 h_0(x_0,x_1) + \partial_1^2 h_0(x_1,x_2) \right] + O \left( \e^4 \right) \\
={}& \e^3 \left[ -2 \ell^{-1} - \frac{2}{3} \rho''(x_1) \right] + O \left( \e^4 \right) \\
\frac{\partial H}{\partial x_2} ={}& \partial_2 \partial_1 h (x_1, x_2) = \e^3 \partial_2 \partial_1 h_0 (x_1, x_2) + O \left( \e^4 \right) = \e^3 \ell^{-1} + O \left( \e^4 \right) \ .
\end{align}

Recall we have assumed that there is an invariant essential curve $C$ of $T$, such that $C \subset 
\T \times [-c_0, c_0]$ in $(x,y)$ coordinates, and we have pointed out that $T|_C$ is necessarily orientation-preserving. Equation \eqref{eq_cosgeneps} implies that the short sequence $x_0, x_1, x_2$ represents a segment of a trajectory of $T$ if and only if $H(x_0,x_1, x_2)=0$. Since $\G$ is not a circle, there are both points on $\T$ for which $\rho'' >0$ and points for which $\rho''<0$. Choose $x_1 \in \T$ so that $x_1$ minimises $\rho''$. In particular, $\rho''(x_1) <0$. Since $C$ is an essential curve there exists $y_1 \in (-c_0, c_0)$ such that $(x_1, y_1) \in C$. Denote by $x_0$ the $x$-component of $T^{-1}(x_1, y_1)$, and by $x_2$ the $x$-component of $T(x_1, y_1)$. Then $x_0, x_1, x_2$ represents a segment of a trajectory of $T$, so $H(x_0, x_1, x_2)=0$. Moreover, since $\ell > - \frac{3}{\rho''(x_1)}$ (indeed, see \eqref{eq_chaoscondition}) and since $\e >0$ is sufficiently small, we have $\frac{\partial H}{\partial x_1}(x_0, x_1, x_2) <0$. Therefore, by the implicit function theorem, there is a $C^1$ function $\nu(x_0',x_2')$ such that for all $(x_0',x_2')$ in a neighbourhood of $(x_0,x_2)$ we have $H(x_0',\nu(x_0',x_2'),x_2') = 0$. Moreover the derivatives of $\nu$ are given by
\[
\frac{\partial \nu}{\partial x_0'} (x_0,x_2) = - \frac{\frac{\partial H}{\partial x_0}}{\frac{\partial H}{\partial x_1}} < 0, \quad \frac{\partial \nu}{\partial x_2'} (x_0,x_2) = - \frac{\frac{\partial H}{\partial x_2}}{\frac{\partial H}{\partial x_1}} < 0. 
\]
It follows that $\nu$ is a decreasing function of $x_0, x_2$, and so $T|_C$ is orientation-reversing. This contradiction implies that there can be no essential invariant curves of the mapping in a neighbourhood of the boundary, as required. 
\end{proof}

\medskip

 We will now give a second proof of Theorem \ref{theorem_nocurves}. 
	For any $x\in \R$, hereafter we set $\|x\|_\T:=\inf_{p\in \Z}|x+2p\pi |$. The first intermediate result that we use is the following Lemma, which is inspired by Herman's ideas \cite[Proposition 2.2]{herman1983courbes}. 
\begin{lemma}\label{lemma_Lipschitz}
	If $C$ is the graph of an essential invariant curve $\xi$ of the map $T$ lying sufficiently close to the boundary then, setting $\t_0 = (\t_+ + \t_-)/2$ where $\t_{\pm}$ are the maximal and minimal values of $\t$ over all $(\f, \t) \in C$, the Lipschitz constant of $C$, denoted by 
	\begin{equation}
		\mathscr L_C:= \sup_{\substack{\f_0,\f_1\in \R\\ \f_0\neq \f_1}} \displaystyle \frac{|\xi(\f_1)-\xi( \f_0)|}{\|\f_1-\f_0\|_\T} = \sup_{\substack{\f_0,\f_1\in [0,2\pi)\\ 0<|\f_0- \f_1|\le \pi}} \displaystyle \frac{|\xi(\f_1)-\xi( \f_0)|}{|\f_1-\f_0|}
	\end{equation}
	is bounded by 
	\begin{equation}
		\mathscr L_C \le 2\left(\frac{1}{\ell}+\frac23\max_{x\in \T}\rho''(x)\right)\t_0^2\ .
	\end{equation}

\end{lemma}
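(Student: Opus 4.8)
The plan is to adapt Herman's comparison argument for invariant graphs of twist maps \cite[Proposition 2.2]{herman1983courbes}, exploiting the strong negative twist of the coin map near $\partial\A$ visible in the expansion \eqref{eq_coinexp}. Write $C=\mathrm{graph}(\xi)$ with $\xi:\T\to(0,\pi)$ continuous. As in the first proof of Theorem \ref{theorem_nocurves}, the form \eqref{eq_coinexp} forces $T|_C$ — and hence $T^{-1}|_C$ — to be orientation-preserving, so the lifts $f(\f):=\Pi_\f\widetilde T(\f,\xi(\f))$ and $v:=f^{-1}$ are non-decreasing, whence $f(\f_0)\le f(\f_1)$ and $v(\f_0)\le v(\f_1)$ whenever $\f_0<\f_1\le\f_0+\pi$. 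By Lemma \ref{lemma_herman} we may assume $C$ lies so close to $\partial\A$ that $\xi(\f)=\t_0+O(\t_0^2)$ uniformly, so that $\xi(\f_0)\xi(\f_1)=\t_0^2(1+O(\t_0))$ and $|\xi(\f_1)-\xi(\f_0)|=O(\t_0^2)$ for all $\f_0,\f_1$; finally, $\max_\T\rho''\ge0$ (because $\int_\T\rho''\,d\f=0$), so $\tfrac1\ell+\tfrac23\max_\T\rho''\ge\tfrac1\ell>0$.

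Next I would fix $\f_0<\f_1$ with $\f_1-\f_0\le\pi$ and split into two cases. If $\xi(\f_1)\ge\xi(\f_0)$, substitute $\Pi_\f\widetilde T(\f,\t)=\f+\tfrac{\ell}{\t}+\tfrac{2\ell}{3}\rho'(\f)+O(\t)$ into $f(\f_0)\le f(\f_1)$, apply the mean value theorem to $\rho'$ (legitimate since $\partial\G\in C^5$, hence $\rho\in C^3$), and rearrange to reach
\[
\ell\,\frac{\xi(\f_1)-\xi(\f_0)}{\xi(\f_0)\xi(\f_1)}\;\le\;\Big(1+\tfrac{2\ell}{3}\rho''(\zeta)\Big)(\f_1-\f_0)+\mathrm{err},\qquad \zeta\in(\f_0,\f_1),
\]
where $\mathrm{err}=O(1)\,(\xi(\f_1)-\xi(\f_0))+O(\t_0)(\f_1-\f_0)$ collects the remainder and the $O(\t)$ term. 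The $O(1)(\xi(\f_1)-\xi(\f_0))$ piece is absorbed into the left-hand side, since $\xi(\f_0)\xi(\f_1)/\ell=O(\t_0^2)$; substituting $\xi(\f_0)\xi(\f_1)=\t_0^2(1+O(\t_0))$ and $\rho''(\zeta)\le\max_\T\rho''$ then yields $\xi(\f_1)-\xi(\f_0)\le(\tfrac1\ell+\tfrac23\max_\T\rho'')\t_0^2(1+O(\t_0))(\f_1-\f_0)$. If instead $\xi(\f_1)<\xi(\f_0)$, the forward map gives no useful inequality, and I would run the symmetric argument with $v=f^{-1}$: evaluating the forward expansion at the preimage produces the identity $v(\f)=\f-\tfrac{\ell}{\xi(\f)}-R(v(\f),\xi(v(\f)))$ — one wants this form rather than a direct expansion of $T^{-1}$, which would bring in ill-behaved $\rho(\f-\ell\cot\t)$-type terms — and the same bookkeeping, now starting from $v(\f_0)\le v(\f_1)$, gives $\xi(\f_0)-\xi(\f_1)\le\tfrac{\t_0^2}{\ell}(1+O(\t_0))(\f_1-\f_0)$.

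Taking the supremum over all admissible pairs and using $\max_\T\rho''\ge0$, the two cases combine to $\mathscr L_C\le(\tfrac1\ell+\tfrac23\max_\T\rho'')\t_0^2(1+O(\t_0))$, which is at most $2(\tfrac1\ell+\tfrac23\max_\T\rho'')\t_0^2$ as soon as $\t_0$ is small enough, i.e.\ as soon as $C$ is close enough to $\partial\A$. I expect the main obstacle to be the uniform control of the remainder $R$ in \eqref{eq_coinexp}, namely that $\partial_\f R=O(\t)$ and $\partial_\t R=O(1)$ as $\t\to0$ — which is exactly where the $C^5$ hypothesis enters, as flagged after Theorem \ref{theorem_c} — together with the more delicate bookkeeping needed to ensure that every error term produced along the way is genuinely proportional to one of the differences $\f_1-\f_0$, $\xi(\f_1)-\xi(\f_0)$, $v(\f_1)-v(\f_0)$ rather than to a bare power of $\t_0$; this is what allows the estimate to close without any circular appeal to an a priori Lipschitz bound. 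A possibly cleaner alternative would be to differentiate the orbit relation $\partial_2 h(v(\f),\f)+\partial_1 h(\f,f(\f))=0$ for the generating function $h$ of part \ref{item_genfunc3} of Lemma \ref{lemma_generatingfunctions}, exactly as in \cite[Proposition 2.2]{herman1983courbes}, which handles both cases and both error contributions at once.
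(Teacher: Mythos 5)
Your proposal is correct in substance and follows essentially the same route as the paper: the paper's proof is also a Herman-type argument deriving the bound from the orientation preservation of $T|_C$, using the forward map (via \eqref{eq_coinexp}) when the slope $L=\frac{\xi(\f_1)-\xi(\f_0)}{\f_1-\f_0}$ is nonnegative and the inverse map when $L<0$, together with Lemma \ref{lemma_herman} to replace $\xi$-values by $\t_0$ up to $O(\t_0^2)$, and the periodicity of $\rho'$ to ensure $\max_\T\rho''>0$. The only real difference is technical: rather than substituting the expansion at the two endpoints and doing the bookkeeping by hand, the paper applies the mean value theorem to $t\mapsto\widetilde T(\f_t,\t_t)$ (resp.\ $\widetilde T^{-1}$) along the straight segment joining $(\f_0,\xi(\f_0))$ to $(\f_1,\xi(\f_1))$, so that every remainder is automatically multiplied by $1$ or $L$, i.e.\ is proportional to $\f_1-\f_0$ or $\xi(\f_1)-\xi(\f_0)$; this is exactly the device that disposes of the issue you flag — note in particular that in your $L<0$ case the error also contains $|\xi(v(\f_1))-\xi(v(\f_0))|$ evaluated at the preimages, which is not among the three differences you list and would need one further absorption step (e.g.\ via the $\t$-component of \eqref{eq_coinexp}) to close the estimate.
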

We observe that, by Corollary \ref{band}, $\t_0\rightarrow 0$ as $C$ approaches the boundary, so that $\mathscr L_C\rightarrow 0 $ near the boundary as well.

\begin{proof}
	Choose two points $\f_0,\f_1 \in [0,2\pi)$ satisfying $0<|\f_1-\f_0|\le \pi $, that is $0<\|\f_1-\f_0\|_\T\le \pi$. We can suppose that $\f_1>\f_0$ without any loss of generality, and we define the quantities 
	$$
L=	L(\f_0,\f_1):=\displaystyle \frac{\xi(\f_1)-\xi( \f_0)}{\f_1- \f_0}=\frac{\xi(\f_1)-\xi( \f_0)}{\|\f_1-\f_0\|_\T}
	\ ,\ \ 
	N=N(\f_0,\f_1):=\left(
	\begin{matrix}
		1\\
		\displaystyle L
	\end{matrix}
	\right)
	.
	$$
	
	By setting 
	\begin{equation}\label{G}
	G(t):=\displaystyle \frac{\widetilde T(t\f_1+(1-t)\f_0,t \xi(\f_1)-(1-t)\xi(\f_0))}{\f_1-\f_0},
\end{equation}
	  we obtain the following expression for the dynamics restricted to $C=\text{graph}(\xi)$:
	\begin{align}\label{pushforward}
		\begin{split}
			&	\frac{\widetilde T(\f_1,\xi(\f_1))-\widetilde T(\f_0,\xi(\f_0))}{\f_1- \f_0}= G(1)-G(0)= D\widetilde T( \f_\star,\t_\star)N( \f_0,\f_1) ,
		\end{split}
	\end{align}
	where $(\f_\star,\t_\star)$ is a point on the segment joining $(\f_0,\xi(\f_0))$ to $(\f_1,\xi(\f_1))$, and where, taking the expansion of $\widetilde T$ near the boundary given by formula \eqref{eq_coinexp} into account, we have
	\begin{equation}\label{Jacobian}
		D\widetilde T( \f_\star,\t_\star)=	\left(
		\begin{matrix}
			1+\displaystyle \frac23 \ell \rho''( \f_\star)+O(\t_\star) & -\displaystyle\frac{\ell}{\t_\star^2}+O(1)\\
			-\displaystyle \frac23 \rho''(\f_\star)\t_\star^2+O(\t_\star^3) & 1-\displaystyle\frac23 \rho'(\f_\star) \t_\star +O(\t_\star^2)
		\end{matrix}
		\right).
	\end{equation}
	
	Similarly, the inverse dynamics reads
	
	\begin{align}\label{pushforwardinverse}
		\begin{split}
			\frac{\widetilde T^{-1}(\f_1,\xi(\f_1))-\widetilde T^{-1}(\f_0,\xi(\f_0))}{\f_1- \f_0}= & D(\widetilde T^{-1})( \f_\bullet,\t_\bullet)N( \f_0,\f_1),
		\end{split}
	\end{align}
	where, by denoting $(\widetilde \f, \widetilde \t)=\widetilde T^{-1}(\f_\bullet,\t_\bullet)$, we have 
	\begin{align}
			D(\widetilde T^{-1})( \f_\bullet,\t_\bullet)= &	(D\widetilde T)^{-1}(\widetilde \f,\widetilde \t)\\
			=&(1+O(\widetilde \t))\left(
			\begin{matrix}
				1-\displaystyle\frac23 \rho'(\widetilde \f) \widetilde \t +O(\widetilde \t^2) & \ \ \displaystyle\frac{\ell}{\widetilde \t^2}+O(1)\\
				\displaystyle \frac23 \rho''(\widetilde \f)\widetilde \t^2+O(\widetilde \t^3)  & 	1+\displaystyle \frac23 \ell \rho''( \widetilde \f)+O(\widetilde \t)
			\end{matrix}
			\right) \label{inverse_Jacobian}\\
	\end{align}
	where, in the second line, we have taken into account that $1/\det( DT(\widetilde \f,\widetilde \t))=(1+O(\widetilde \t))$. We observe that, by Lemma \ref{lemma_herman}, one has $\t_\star,\widetilde \t\in [\t_0-c_0\t_0^2,\t_0+c_0\t_0^2]$ for some constant $c_0=c_0(\Gamma,\ell)$. Also, one has $\t_0\rightarrow 0$ when $C$ approaches the boundary, due to Lemma \ref{grafici} and to Corollary \ref{band}, so that the expansions in \eqref{Jacobian} and in \eqref{inverse_Jacobian} are truly perturbative.

	In the case where $L\ge 0$, as we had chosen $\f_1>\f_0$ we see from  \eqref{pushforward} and from \eqref{Jacobian} that the condition in order for the dynamics to preserve orientation for $(\f_0,\xi(\f_0))$ and $(\f_1,\xi(\f_1))$ is that 
	\begin{align}
		\begin{split}
			&\frac{L\ell}{\t_\star^2}(1+O(\t_\star^2))<	1+\frac23\ell \rho''(\f_\star)+O(\t_\star)\Leftrightarrow \ \\ 
			& L<\left(\frac{1}{\ell}+\frac23 \rho''(\f_\star)+O(\t_\star)\right)\frac{\t_\star^2}{1+O(\t_\star^2)},
		\end{split}
	\end{align}
	which implies that for any choice of $\f_0$ and $\f_1$ one must necessarily have 
	\begin{equation}\label{boundL1}
		L\ge 0 \qquad \Longrightarrow \qquad 			 L\le 2\left(\frac{1}{\ell}+\frac23 \max_{x\in \T}\rho''(x)\right)\t_0^2.
	\end{equation}
	If $L<0$, then by formulas \eqref{pushforwardinverse} and \eqref{inverse_Jacobian} the dynamics is orientation preserving if and only if
	\begin{equation}
		-\frac{\ell L}{\widetilde \t^2}(1+O(\widetilde \t))<1+O(\widetilde \t) \ \Leftrightarrow\ |L|<\frac{1+O(\widetilde \t)}{1+O(\widetilde \t)}\frac{\widetilde \t^2}{\ell},
	\end{equation}
	which yields the necessary condition 
	\begin{equation}\label{boundL2}
		L<0\qquad \Longrightarrow \qquad 	|L|\le  \frac{2}{\ell}\t_0^2.
	\end{equation}
	As the bounds \eqref{boundL1} and \eqref{boundL2} are necessary conditions for the dynamics on $C$ not to be orientation reversing for any choice of the points $(\f_0,\xi(\f_0))$ and $(\f_1,\xi(\f_1))$, with $\f_1,\f_0\in[0,2\pi)$, $0<\| \f_1-\f_0\|_\T\le \pi$, and as $\max_\T\rho''(x)>0$ due to the periodicity of $\rho'(x)$, the bound on the global Lipschitz constant $\mathscr L_C$ given in the statement holds. 
\end{proof}

\begin{proof}[Second proof of Theorem \ref{theorem_nocurves}]
Since by hypothesis we have $\ell>\ell_0$, for any given height $\ell$ there exists a number $K=K(\ell,\ell_0)>1$ such that 
\begin{equation}\label{kappaellezero}
	\ell>-\frac{3K}{\min_{x\in \T}\rho''(x)}=K\ell_0.
\end{equation}
 We now adopt the setting and the notations of Lemmas \ref{lemma_herman} and \ref{lemma_Lipschitz}, and we fix a proper interval $[\f_0,\f_1]\subset \T$ such that
	\begin{equation}\label{controllo der seconda curvatura}
\rho''\left(	[\f_0,\f_1]\right)= 	I_{\Gamma,K}, \qquad \text{ where } I_{\Gamma,K}:=\left[\min_{x\in \T}\rho''(x),\min_{x\in \T}\rho''(x)/K\right]\ .
	\end{equation}
	We observe that such an interval $[\f_0,\f_1]$ exists because $\rho''$ is continuous ($\partial \Gamma$ is $C^5$), and that $I_{\Gamma,K}$ is not a singleton because the table $\Gamma$ is not a disc.

	Then, for $t\in [0,1]$, we  use the notations
	\begin{equation}\label{notazioni}
		\f_t:=t\f_1+(1-t)\f_0, \qquad \t_t:=t\xi(\f_1)+(1-t)\xi(\f_0)\ ,
	\end{equation}
	and
	we set $H(t):=\Pi_\f G(t):=\Pi_\f \widetilde T(\f_t,\t_t)/(\f_1-\f_0)$. One has $H\in C^1([0,1])$; by formulas \eqref{Jacobian}, \eqref{notazioni}, and by Lemma \ref{lemma_herman}, its derivative reads
	\begin{align}\label{accaprimo}
		\begin{split}
		H'(t)=&  	1+\frac23\ell \rho''(\f_t)+O(\t_t)-\frac{L\ell}{\t_t^2}(1+O(\t_t^2))\\
		= & 1+\frac23\ell \rho''(\f_t)-\frac{L\ell}{\t_0^2}+R(\f_t,\t_t)
		\end{split} 
	\end{align}
	where $L:=(\xi(\f_1)-\xi(\f_0))/(\f_1-\f_0)$, and where, in the last equality, for any $t\in [0,1]$ the function $R(\f_t,\t_t)$ is a remainder of order $O(\t_0)$ when $\t_0\rightarrow 0$ due to Lemma \ref{lemma_Lipschitz}. 
	Then, by \eqref{accaprimo}, one has
	\begin{equation}
	H'(t)<0 \quad \Longleftrightarrow \quad 	L>\left(\frac{1}{\ell}+\frac23 \rho''(\f_t)\right)\t_0^2+R(\f_t,\t_t)\frac{\t_0^2}{\ell},
	\end{equation}
	so that, due to \eqref{controllo der seconda curvatura} and to Lemma \ref{lemma_herman}, if the condition
	\begin{align}\label{minorante L}
		\begin{split}
	L>\left(\frac{1}{\ell}+\frac23\frac{ \min_{x\in \T} \rho''(x)}{K}\right)\t_0^2+R^\star ,\\
	 \text{ with }\quad 	R^\star:= \frac{\t_0^2}{\ell}\times \max_{\substack{\f\in [\f_0,\f_1] \\ \t\in[\t_0-c_0\t_0^2,\t_0+c_0\t_0^2] }} R(\f,\t),
	\end{split} 
	\end{align}
	is true, then $H'(t)<0$ for all $t\in [0,1]$. 
	We now suppose that condition \eqref{minorante L} holds; then, $H$ is strictly decreasing in $[0,1]$, and  one has 

	\begin{equation}\label{rev1}
		H(1)-H(0):=\frac{\Pi_\f\left[\widetilde  T(\f_1,\xi(\f_1))-\widetilde T(\f_0,\xi(\f_0))\right]}{\f_1-\f_0}<0.
	\end{equation} 
Expression \eqref{rev1}  indicates that $\widetilde T$ reverses the orientation of the points $(\f_1,\xi(\f_1))$ and $(\f_0,\xi(\f_0))$, in contradiction with the fact that, as we have already pointed out previously in the paper, $\widetilde T|_\xi$ preserves orientation. Therefore, condition \eqref{minorante L} must be false, and one must have 
\begin{equation}\label{maggiorante L}
	L\le \left(\frac{1}{\ell}+\frac23 \frac{\min_{x\in \T} \rho''(x)}{K}\right)\t_0^2+R^\star.
\end{equation}
Then, due to formulas \eqref{pushforwardinverse} and \eqref{inverse_Jacobian}, and to Lemma \ref{lemma_herman}, the inverse dynamics yields 

\begin{align}
		&\frac{\widetilde T^{-1}(\f_1,\xi(\f_1))-\widetilde T^{-1}(\f_0,\xi(\f_0))}{\f_1- \f_0}=(1+O(\widetilde \t))+\frac{\ell L}{\widetilde \t^2}(1+O(\widetilde \t)) \\
		&  \stackrel{\eqref{maggiorante L}}{\le } (1+O(\widetilde \t))+\frac{\ell }{\widetilde \t^2}(1+O(\widetilde \t))\left\{\left(\frac{1}{\ell}+\frac23 \frac{\min_{x\in \T} \rho''(x)}{K}\right)\t_0^2+R^\star\right\}\\
		& = (1+O(\t_0))+\frac{\ell }{ \t_0^2}(1+O( \t_0))\left\{\left(\frac{1}{\ell}+\frac23 \frac{\min_{x\in \T} \rho''(x)}{K}\right)\t_0^2+R^\star\right\}\\
		& = 2+\frac23 \ell \frac{\min_{x\in \T} \rho''(x)}{K}+\frac{\ell R^\star} {\t_0^2}+O(\t_0). \label{inverse reverse}
\end{align}
Now, we have that 
\begin{equation}
	2+\frac23 \ell \frac{\min_{x\in \T} \rho''(x)}{K}<0, \qquad \frac{\ell R^\star} {\t_0^2}=\max_{\substack{\f\in [\f_0,\f_1] \\ \t\in[\t_0-c_0\t_0^2,\t_0+c_0\t_0^2] }} R(\f,\t)=O(\t_0)
\end{equation}
due, respectively, to condition \eqref{kappaellezero}, and to the previous constructions. Moreover, as $K$ depends only on $\ell$ and $\ell_0$, the value of $2+\displaystyle \frac23 \ell \frac{\min_{x\in \T} \rho''(x)}{K}$ is a fixed quantity when $\t_0\rightarrow 0$. Hence, by expression \eqref{inverse reverse} we have that $\widetilde T^{-1}$ is orientation reversing for the points $(\f_1,\xi(\f_1))$ and $(\f_0,\xi(\f_0))$ when the graph of the curve $\xi$ lies too close to the boundary, which is once again impossible. 

We have derived a contradiction when either \eqref{minorante L} or \eqref{maggiorante L} is satisfied, hence there exists no essential invariant curve of $\widetilde T$ close to the boundary.

\end{proof}

\section{Extension of a Theorem of Bialy}\label{sec_birkhoff}

The goal of this section is to prove Theorem \ref{theorem_c}. As mentioned in the introduction, we prove that if the billiard table is noncircular then there exists a sequence of homotopically trivial regions (elliptic islands or chaotic regions) through which there passes no essential invariant curve of the coin map. In addition, we provide estimates on the Lebesgue measure of these regions, which is of order $O \left( \frac{1}{m^2} \right)$, where $m$ is the index of the closest vertically-mapped graph (see Section \ref{auxiliary_section}). The main result of this section is the following. 

\begin{theorem}\label{theorem_birkhoffzones}
Let $\Gamma \subset \R^2$ be a compact and strictly convex billiard domain, with $\partial \Gamma$ a closed $C^{5}$ plane curve of length $2 \pi$. Suppose, moreover, that $\Gamma$ is not a disc. Then
	\begin{enumerate}
		\item  For any $\ell>0$ there exists $M=M(\G,\ell)\in \mathbb N$, and a sequence $\{\mathscr I_m\}_{m\ge M}$ of open, bounded, connected, homotopically-trivial regions accumulating on $\partial \A$, and intersecting no essential invariant curve of the coin map $T$ on $\C(\G,\ell)$;
		\item The Lebesgue measure of these regions is
		\begin{equation}
			\text{meas} (\mathscr I_m)\ge \frac{\ell^2\,\max_{x\in \T}\rho'(x)}{8(3+2\ell \max_{x\in \T}\rho''(x))}\times \frac{ \text{meas} (J_\G)}{\left(2m \pi +1-\displaystyle \frac13 \ell \max_{x\in \T}\rho'(x)\right)^2}, 
		\end{equation}
		where $J_\G$ is the largest connected component of  the set $$
	\left	\{\f\in \T \ |\ \rho'(\f)\in [\max_{x\in \T}\rho'(x)/2,\max_{x\in \T}\rho'(x)]\right \};
		$$
		\item If $T$ is integrable, for each $m\ge M$ the boundary of $\mathscr I_m$ is given by pieces of essential $T$-invariant graphs of rotation number $m$. 
		
	\end{enumerate}

\end{theorem}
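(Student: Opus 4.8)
### Proof proposal for Theorem~\ref{theorem_birkhoffzones}

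\textbf{Overall strategy.} The plan is to build the regions $\mathscr I_m$ directly out of the vertically-mapped graphs $\mathscr G_m$ of Lemma~\ref{grafici}, exploiting the fact that these graphs are \emph{not} invariant curves (indeed the sets $\mathscr D_m$ from Corollary~\ref{band} are nonempty precisely because $\G$ is not a disc, since $\rho'\not\equiv 0$). The key mechanism is that on $\mathscr G_m$ the $\f$-coordinate is mapped vertically (point 2 of Lemma~\ref{grafici}), so a point of $\mathscr G_m$ at which the $\t$-coordinate genuinely changes cannot lie on any essential invariant graph $C$ — this is exactly the contradiction already extracted in the proof of Corollary~\ref{band}. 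I would first make this quantitative: using the expansion \eqref{eq_coinexp}, on $\mathscr G_m$ one has $\bar\t - \t = -\tfrac23\rho'(\f)\t^2 + O(\t^3)$, so at points $\f$ where $\rho'(\f)$ is bounded away from $0$ (say $\f\in J_\G$), the image $\widetilde T(\f, g_m(\f))$ is displaced vertically from $\mathscr G_m$ by a definite amount $\sim \rho'(\f) g_m(\f)^2$. Because the $\mathscr G_k$ are well-ordered and, by Corollary~\ref{width}, the band between $\mathscr G_{m-1}$ and $\mathscr G_{m+1}$ has width $\le \tfrac{6\pi}{\ell} g_m^2$, the vertical displacement pushes $\widetilde T(\f, g_m(\f))$ across at least the gap to $\mathscr G_{m\pm 1}$ once $\rho'$ is large enough relative to the geometry; the ratio $\ell^2\max\rho'/(3+2\ell\max\rho'')$ in the measure bound is precisely what comes out of comparing $\tfrac23\rho'\, g_m^2$ with the band-width estimate and the derivative bound of point~3 of Lemma~\ref{grafici}.

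\textbf{Construction of $\mathscr I_m$.} For each large $m$ I would take $\mathscr I_m$ to be the open region bounded above by $\mathscr G_{m-1}$, below by $\mathscr G_{m+1}$, and laterally by the two vertical segments over the endpoints of $J_\G$ — a topological rectangle, hence open, bounded, connected and homotopically trivial. The content of part~1 is then: no essential invariant curve $C$ meets $\mathscr I_m$. Suppose one did. By Corollary~\ref{band}, $C$ lies between $\mathscr G_{m^\star-1}$ and $\mathscr G_{m^\star+1}$ for a single $m^\star$; if $m^\star \neq m$ the bands are disjoint and we are done, so assume $m^\star = m$, i.e.\ $C$ threads the band defining $\mathscr I_m$. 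Pick $\f_\ast\in J_\G$ where $\rho'(\f_\ast)\ge \tfrac12\max_\T\rho'$ and let $(\f_\ast,\t_\ast)=(\f_\ast,\xi(\f_\ast))\in C$. Applying $\widetilde T$ and using \eqref{eq_coinexp} together with the quantitative vertical-displacement estimate above, $\Pi_\t\widetilde T(\f_\ast,\t_\ast)$ differs from $\t_\ast$ by $\sim \rho'(\f_\ast)\t_\ast^2$, which for $m$ large exceeds the total height of the band (by Corollary~\ref{width}); but $\widetilde T(\f_\ast,\t_\ast)\in C$ must remain in that band, and moreover by the twist/ordering argument of Corollary~\ref{sanremo} its abscissa can move by at most $4\pi$, forcing it to stay over a controlled $\f$-interval — contradiction with $C$ being a graph, exactly as in Corollary~\ref{band}. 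For the lower bound on $\mathrm{meas}(\mathscr I_m)$ in part~2, I would integrate the vertical thickness of the band between $\mathscr G_{m-1}$ and $\mathscr G_{m+1}$ over $J_\G$: the thickness is $\ge c\,\rho'(\f)\,g_m(\f)^2$ from the displacement computation, and $g_m(\f)\sim \ell/(2m\pi)$ uniformly (from $F_m(\f,g_m(\f))=0$ in \eqref{Fm}, which gives $g_m(\f) = \ell/(2m\pi) + O(1/m^2)$, the $O(1/m^2)$ absorbing the $\tfrac23\ell\rho'$ and $R$ terms), yielding $\mathrm{meas}(\mathscr I_m)\gtrsim \ell^2\max\rho'\cdot\mathrm{meas}(J_\G)/(2m\pi+1-\tfrac13\ell\max\rho')^2$ after bookkeeping the constants $3+2\ell\max\rho''$ coming from point~3 of Lemma~\ref{grafici}.

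\textbf{The integrable case (part~3).} If $T$ is integrable, the annulus $\A$ is foliated by essential invariant graphs. By part~1 none of these meets the interior of $\mathscr I_m$; on the other hand a foliation leaf passes through every point of $\A\setminus\bigcup_m\mathscr I_m$, and in particular arbitrarily close above $\mathscr G_{m-1}$ and below $\mathscr G_{m+1}$. I would argue that the supremum of the leaves lying below $\mathscr G_{m+1}$ (together with the infimum of those lying above $\mathscr G_{m-1}$, and a squeeze on the lateral edges) produces boundary curves of $\mathscr I_m$ that are themselves essential invariant graphs — their invariance follows because a monotone limit of invariant graphs of a twist homeomorphism is an invariant graph, and the limit is Lipschitz by Herman's bound (Lemma~\ref{lemma_herman}), hence a genuine essential curve. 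That these limiting leaves have rotation number exactly $m$ then follows from Corollary~\ref{band} applied to them (they lie in the closed band of index $m$), from Poincaré's theorem, and from continuity of the rotation number across the foliation; one also checks the leaf cannot degenerate to a point since $\mathrm{meas}(\mathscr I_m)>0$ by part~2. I expect the \textbf{main obstacle} to be part~3: turning the soft statement ``the boundary is made of leaves of the foliation'' into ``those leaves are essential invariant graphs of rotation number $m$'' requires care with the lateral edges of the rectangle (where the bounding leaf may cross from $\mathscr G_{m-1}$ to $\mathscr G_{m+1}$ level) and with the possibility that the foliation is only, a priori, by Lipschitz graphs rather than smooth ones — this is where Herman's uniform Lipschitz bound near $\partial\A$ and the well-ordering of the $\mathscr G_m$ do the essential work, and I would lean on Corollary~\ref{band} to pin the rotation number. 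The quantitative part~2, while computation-heavy, is routine once the displacement estimate and the asymptotics $g_m\sim \ell/(2m\pi)$ are in hand.
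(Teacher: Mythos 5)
Your construction of $\mathscr I_m$ --- the full band between $\mathscr G_{m-1}$ and $\mathscr G_{m+1}$ over $J_\G$ --- claims too much, and the argument offered for it does not close. The contradiction you invoke compares the vertical displacement $\tfrac23\rho'(\f_\ast)\t_\ast^2+O(\t_\ast^3)$ of a point of a hypothetical invariant graph with the height of the band, which by Corollary~\ref{width} is at most $\tfrac{6\pi}{\ell}\t_m^2$; the displacement exceeds that height only if $\rho'(\f_\ast)>9\pi/\ell$, which is not assumed (and is hopeless for small $\ell$ or mildly noncircular tables). Worse, the claim itself cannot hold in the generality required: any essential invariant graph of rotation number $m$ --- exactly the objects that part~3 of the theorem says bound $\mathscr I_m$ in the integrable case --- is contained in the index-$m$ band by Corollary~\ref{band} and, being essential, passes over $J_\G$, hence enters your rectangle. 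So ``no essential invariant curve meets $\mathscr I_m$'' is false for your $\mathscr I_m$ whenever such graphs exist, and in general it is unprovable. The paper instead takes $\mathscr I_m$ to be the union of the maximal invariant-curve-free neighbourhoods of the points of $\mathscr D_m\subset\mathscr G_m$ (Lemma~\ref{lemma_vertically_mapped_points}), a region that is in general strictly thinner than the band, with $\mathscr E_m\neq\varnothing$ (from the intersection property) guaranteeing homotopical triviality.

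Because of this, the mechanism behind the measure bound in part~2 is also missing from your sketch. In the paper the thickness of $\mathscr I_m$ at an abscissa $\f_\star\in J_\G$ is the vertical distance $|\beta|$ from $(\f_\star,g_m(\f_\star))$ to the nearest invariant graph $C$, and the lower bound $|\beta|\gtrsim \rho'(\f_\star)\,\t_m^2/(3+2\ell\max_{x\in\T}\rho''(x))$ comes from pitting two Lipschitz estimates against each other: at the point $(\f_\star,\t_\star)\in C$ with $\t_\star=g_m(\f_\star)+\beta$, the vertical-mapping property of $\mathscr G_m$ makes $\|\Pi_\f\widetilde T-\f_\star\|_\T\le \tfrac{3\ell|\beta|}{2\t_m^2}$ while $|\Pi_\t\widetilde T-\t_\star|\ge\tfrac12\rho'(\f_\star)\t_m^2$, forcing the local slope of $C$ to be at least $\rho'(\f_\star)\t_m^4/(3\ell|\beta|)$; Herman's bound (Lemmas~\ref{lemma_herman} and~\ref{lemma_Lipschitz}) caps the Lipschitz constant of $C$ by a constant times $\bigl(\tfrac1\ell+\tfrac23\max_{x\in\T}\rho''(x)\bigr)\t_m^2$, and comparing the two yields the stated $|\beta|$. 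The constant $3+2\ell\max\rho''$ therefore comes from Herman's cap, not from point~3 of Lemma~\ref{grafici} as you suggest, and ``integrating the band thickness'' says nothing about the invariant-curve-free region. (For part~3 the paper also proceeds differently: Ascoli--Arzel\`a applied to accumulating invariant graphs through a point of $\mathscr E_m$, plus the cited result of Arnaud that the boundary of each component of an instability region consists of pieces of two invariant graphs of the same rational rotation number, with the rotation number pinned to $m$ by the fixed ordinate and $2m\pi$ shift on $\mathscr E_m$ and Poincar\'e's theorem; but the decisive gaps in your proposal are in parts~1 and~2.)
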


We now consider the setting and the notations of Section \ref{auxiliary_section}. The following simple result also turns out to be fundamental in order to prove Theorem \ref{theorem_birkhoffzones}. 

\begin{lemma}{\label{lemma_vertically_mapped_points}}
Given  $m\ge 2 m_0$, with $m_0$ as in Lemma \ref{grafici}, choose, if it exists, a point $(\f_\star ,\t_\star )\in \mathscr{G}_m$ such that $\Pi_\t \widetilde T(\f_\star ,\t_\star )\neq \t_\star $. Then there is a neighborhood of $(\f_\star ,\t_\star )$ that intersects no essential invariant curve of $\widetilde T$.
\end{lemma}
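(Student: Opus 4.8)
The plan is to argue by contradiction, exploiting the key geometric fact from Lemma \ref{grafici} that every point of $\mathscr G_m$ has the same image abscissa, namely it is mapped ``vertically'' by $\widetilde T$. Suppose, for contradiction, that every neighbourhood of $(\f_\star,\t_\star)$ meets some essential invariant curve of $\widetilde T$. First I would observe that since $\Pi_\t \widetilde T(\f_\star,\t_\star)\neq \t_\star$, while $\Pi_\f\widetilde T(\f_\star,\t_\star)=\f_\star+2m\pi$, the image point $\widetilde T(\f_\star,\t_\star)$ lies strictly above or strictly below the graph $\mathscr G_m$ (in the cover this is the statement that its ordinate differs from $\t_\star$ and its abscissa agrees with $\f_\star$ modulo $2\pi$). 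Say without loss of generality $\Pi_\t\widetilde T(\f_\star,\t_\star)>\t_\star$; the other case is symmetric, or follows by reversibility.

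Next I would take an essential invariant curve $C=\operatorname{graph}(\xi)$ passing through a small neighbourhood $U$ of $(\f_\star,\t_\star)$, with $U$ chosen small enough that, by continuity of $\widetilde T$ and of $\xi$, the image $\widetilde T(U\cap C)$ lies entirely on the strictly-above side determined in the previous step, and also small enough that (invoking Corollary \ref{band}) $C$ cannot cross both $\mathscr G_{m-1}$ and $\mathscr G_{m+1}$ — indeed $C$ must lie in the band bounded by $\mathscr G_{m^\star-1}$ and $\mathscr G_{m^\star+1}$ for a single $m^\star$, and by taking $U$ small we force $m^\star=m$. Now pick a point $(\f_1,\xi(\f_1))\in U\cap C$. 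Its abscissa satisfies $\Pi_\f\widetilde T(\f_1,\xi(\f_1))\approx \f_1+2m\pi$ up to the small error controlled by Corollary \ref{sanremo} (points in the band between $\mathscr G_{m-1}$ and $\mathscr G_{m+1}$ have image abscissa within $4\pi$ of $\f_1+2m\pi$). The point is that the dynamics on $C$ restricted to this small arc is essentially a near-identity map in the $\f$-coordinate, so $\widetilde T$ maps the arc $U\cap C$ to another short arc of $C$ near $(\f_\star+2m\pi,\t_\star)$; but by construction that image arc lies strictly above $\mathscr G_m$ near $\f_\star$, whereas $C$ itself, being a graph passing arbitrarily close to $(\f_\star,\t_\star)\in\mathscr G_m$, has points on $\mathscr G_m$ or straddling it — one then derives that $C$ would have to cross $\mathscr G_m$ transversally in a way incompatible with $C$ being a graph of small Lipschitz constant, or directly that $C$ and its image $C$ (it is invariant!) would have to disagree near $\f_\star+2m\pi$.

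The cleanest way to phrase the contradiction, I expect, is this: because $C$ is invariant and because $\widetilde T$ moves $\f$-coordinates of points in the relevant band by $2m\pi$ up to a bounded error, the curve $C$ both contains points arbitrarily close to $(\f_\star,\t_\star)$ (on $\mathscr G_m$, by hypothesis that $U\cap C\neq\varnothing$ for all $U$) and contains their images, which are forced to be on one fixed side of $\mathscr G_m$ over the base point $\f_\star$; since $\mathscr G_m$ is itself a $C^4$ graph and $C$ is a Lipschitz graph with Lipschitz constant tending to $0$ near the boundary (Lemma \ref{lemma_herman}), a graph that has points on both sides of, or on, $\mathscr G_m$ over nearby abscissae, yet whose image lies strictly on one side over $\f_\star$, contradicts invariance. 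I would make this precise by comparing ordinates: writing $(\bar\f,\bar\t)=\widetilde T(\f_\star,\t_\star)$ with $\bar\f=\f_\star+2m\pi$ and $\bar\t\neq\t_\star$, the invariance forces $\xi(\bar\f)=\bar\t$ in the limit of shrinking $U$, i.e. $\xi(\f_\star+2m\pi)=\bar\t$, while $2\pi$-periodicity of the situation and $\xi(\f_\star)$ being forced (again in the limit) to equal $\t_\star$ gives $\xi(\f_\star)=\t_\star\neq\bar\t=\xi(\f_\star)$, a contradiction. The main obstacle, and the step requiring genuine care, will be making the limiting/continuity argument rigorous — controlling uniformly, as $U$ shrinks, that the invariant curves through $U$ genuinely pass through points whose $\widetilde T$-image is pinned near $(\bar\f,\bar\t)$, which is exactly where Corollaries \ref{band} and \ref{sanremo} and the uniform smallness of the Lipschitz constants from Lemma \ref{lemma_herman} must be combined; the bounded-error $4\pi$ in Corollary \ref{sanremo} is harmless only because $\mathscr G_{m-1}$ and $\mathscr G_{m+1}$ squeeze together at rate $O(g_m^2)$ by Corollary \ref{width}, so the vertical ambiguity vanishes faster than any fixed scale.
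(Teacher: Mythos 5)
Your argument is correct and is essentially the paper's own proof in a rearranged form: the paper derives the contradiction by noting that the slope between $(\f_\star,\xi_k(\f_\star))$ and its image $\widetilde T(\f_\star,\xi_k(\f_\star))$ (both on the invariant graph) blows up because the numerator tends to $|\Pi_\t\widetilde T(\f_\star,\t_\star)-\t_\star|\neq 0$ while the abscissa shift tends to $0$ mod $2\pi$, violating the uniform Lipschitz bound, which is exactly your ordinate-comparison $\xi_k(\f_\star)\to\t_\star$ and $\xi_k(\f_\star)\to\bar\t$ via periodicity. Two small remarks: the uniform Lipschitz smallness you need is Lemma \ref{lemma_Lipschitz} (Lemma \ref{lemma_herman} gives the $c_0\t_0^2$ vertical confinement), and the detour through Corollaries \ref{width}, \ref{band} and \ref{sanremo} in your middle paragraph is not needed — continuity of $\widetilde T$ plus the uniform Lipschitz bound already pins the image near $(\f_\star+2m\pi,\bar\t)$.
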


\begin{proof}
In the proof of Corollary \ref{band} we had defined the set \begin{align}\label{Dm}
	\begin{split}
		\mathscr{D}_m:=&\{(\varphi,\theta)\in \mathscr G_m\ |\ \ \Pi_\t \widetilde  T(\varphi,\theta)\neq \theta\}\ 
	\end{split} 
\end{align}
and we showed that, for any $m\ge m_0$, any possible invariant graph $C$ of $\widetilde T$ satisfies $C\cap \mathscr D_m=\varnothing$.  

Now, given $m\ge 2m_0$ and $(\f_\star ,\t_\star )\in \mathscr D_m$, suppose for contradiction that there exists a sequence of essential invariant curves $\{\xi_k\}_{k\in \mathbb N}$ whose graphs $\{C_k\}_{k\in \mathbb N}$  approach $(\f_\star ,\t_\star )$, that is 
$ \inf_{(\f,\t)\in C_k} |(\f,\t)-(\f_\star,\t_\star)|\rightarrow 0$. Then, as the graphs have uniformly bounded Lipschitz constants by Lemma \ref{lemma_Lipschitz}, one must have $(\f_\star ,\xi_k(\f_\star ))\rightarrow(\f_\star ,\t_\star )$. For any given $k$, the Lipschitz constant $\mathscr L_{C_k}$ of the graph $C_k$ must satisfy
\begin{equation}
\mathscr L_{C_k}\ge \frac{|\Pi_\t \widetilde T(\f_\star ,\xi_k(\f_\star ))-\xi_k(\f_\star )|}{\|\Pi_\f \widetilde T(\f_\star ,\xi_k(\f_\star ))-\f_\star \|_\T}.
\end{equation}
However, we have that $|\Pi_\t \widetilde  T(\f_\star ,\xi_k(\f_\star ))-\xi_k(\f_\star )|\rightarrow|\Pi_\t \widetilde  T(\f_\star ,\t_\star )-\t_\star |\neq 0$ by hypothesis,  and that $\|\Pi_\f \widetilde  T(\f_\star ,\xi_k(\f_\star ))-\f_\star \|_\T\rightarrow\|\Pi_\f \widetilde  T(\f_\star ,\t_\star )-\f_\star \|_\T= 0$ by Lemma \ref{grafici} as $(\f_\star,\t_\star)\in \mathscr G_m$, so that the $\mathscr L_{C_k}$ are unbounded, in contradiction with Lemma \ref{lemma_Lipschitz}.
\end{proof}

We are now able to prove Theorem \ref{theorem_birkhoffzones}.

\begin{figure}[h]
	\centering
	\includegraphics[width=1\textwidth]{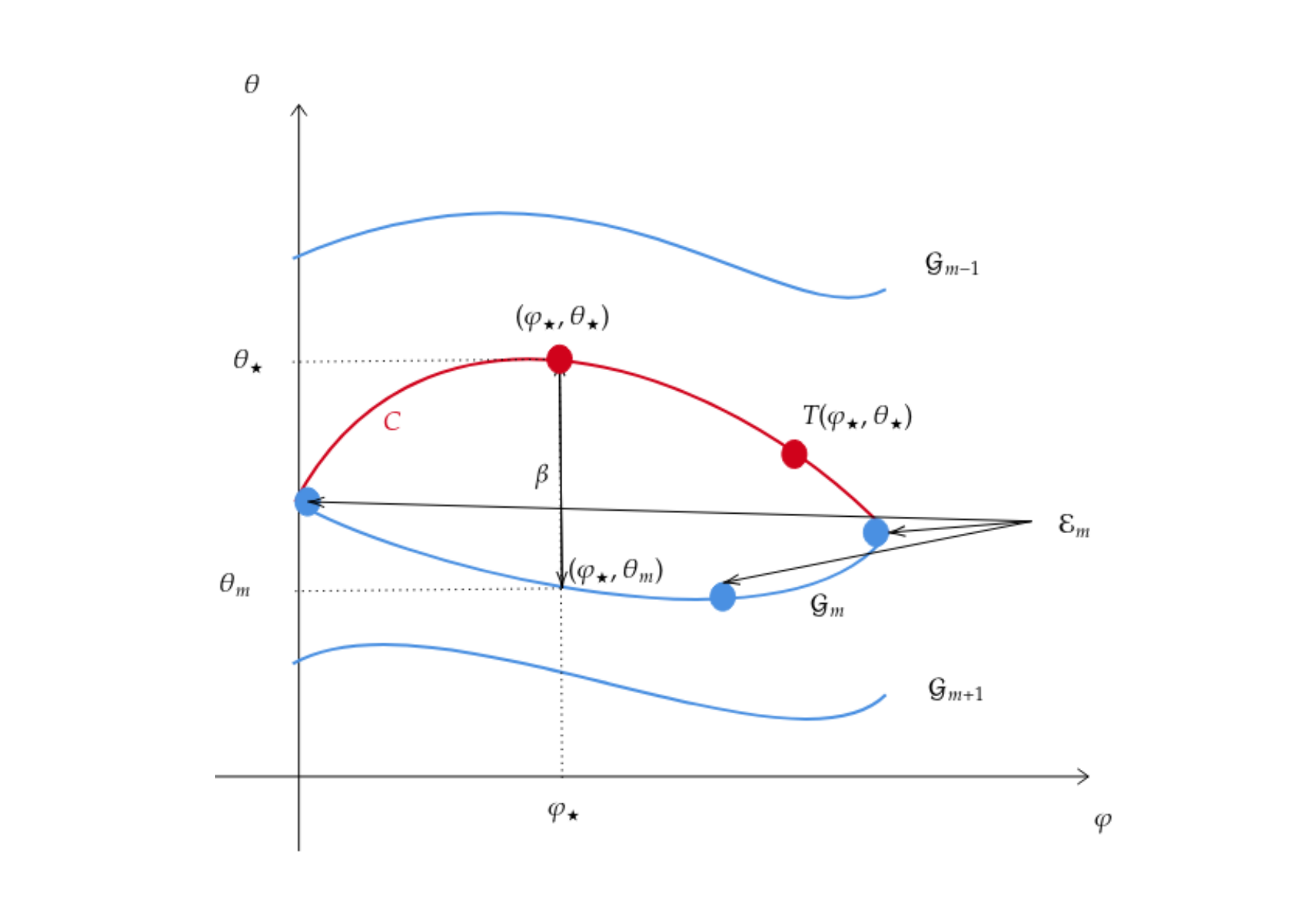}
	\label{figure_theoremc}
	\caption{A depiction of the proof of Theorem \ref{theorem_birkhoffzones}.}
\end{figure}

\begin{proof}[Proof of Theorem \ref{theorem_birkhoffzones}]
We proceed by steps. The first point of the thesis is proved at Step 1, whereas Step 2 contains the proof of the third point. Finally, the second point is proved at Steps 3-4.   
\medskip 

{\it Step 1. } For any sufficiently large integer $m$, we claim that as $\Gamma$ is not a disc one has
\begin{equation}
	\mathscr{D}_m:=\{(\varphi,\theta)\in \mathscr G_m\ |\ \ \Pi_\t \widetilde  T(\varphi,\theta)\neq \theta\}\neq \varnothing\ .
\end{equation}
In fact, if for contradiction $\mathscr D_m=\varnothing$, then each graph $\mathscr G_m$ would be flat and $\widetilde T$ would admit a sequence of lines of constant $\t$ approaching the boundary. As the dynamics on the variable $\t$ is the same for both the coin and the classical billiard map on the same table $\Gamma$, this implies that the classical billiard on $\Gamma$ would admit a sequence of circular caustics approaching $\partial \Gamma$, which means that $\partial \Gamma$ is a circle in contradiction with the hypothesis.

Then, by Lemma \ref{lemma_vertically_mapped_points}, each point in $\mathscr{D}_m$ has a neighborhood intersecting no essential invariant curve of $\widetilde T$. Taking the union over the points in $\mathscr{D}_m$ of the largest possible neighborhoods of this kind, one gets an open region $\mathscr I_m$ where no essential $\widetilde T$-invariant curve passes. We also observe that
\begin{equation}\label{Em}
	\mathscr E_m:=\{(\varphi,\theta)\in \mathscr G_m\ |\ \Pi_\t \widetilde  T(\varphi,\theta)=\theta \}= \mathscr G_m\setminus \mathscr D_m\neq \varnothing,
\end{equation}
as, due to part 3 of Lemma \ref{lemma_generatingfunctions}, $\widetilde T$ has the intersection property, so that, by construction, $\mathscr I_m$ is homotopically trivial.

{\it Step 2.} Fix $m$ sufficiently large; if any point $(\varphi,\theta)\in \mathscr E_m$ admits a neighborhood where no essential $\widetilde T$-invariant graph passes, then by construction there exists a whole tubular neighborhood of $\mathscr G_m$ having empty intersection with the essential invariant curves of $\widetilde T$, i.e. a so-called Birkhoff zone of instability. By classical results on twist maps of the annulus (see e.g. \cite{ angenententropy,katokhorseshoe, lecalvez1987proprietes, marcoentropy}) this implies that $\widetilde T$ is nonintegrable. Therefore, if $\widetilde T$ is integrable, there must exists a point $(\varphi,\theta)\in \mathscr E_m$ admitting a sequence of essential invariant graphs $\{C_k\}_{k\in \mathbb N}$ that approaches it, namely $\inf_{(\f',\t')\in C_k} |(\f',\t')-(\f,\t)|\rightarrow 0$. By Lemma \ref{lemma_Lipschitz}, the graphs $\{C_k\}$ are Lipschitz over $\T$, with a uniform bound on their constant. Then, the theorem of Ascoli and Arzelà ensures the existence of a subsequence $\{C_{k_j}\}_{j\in \mathbb N}$ converging uniformly to a Lipschitz graph $C$ having the same bound on the Lipschitz constant. As $\widetilde  T$ is continuous, and the graphs $C_k$ are all $\widetilde  T$-invariant by hypothesis, $C$ is also $\widetilde  T$-invariant. Moreover, by construction we have $(\varphi,\theta)\in C$. Now, the point $(\varphi,\theta)\in \mathscr E_m$ has rotation number $m$, as $\widetilde T(\f,\t)=(\f+2m\pi,\t)$, so that by Poincaré's Theorem the graph $C$ must have rotation number $m$ too. 

Now, it is known \cite[Proposition 2.17]{arnaudboundaries} that the boundary of each connected component of $\mathscr I_m$ is the union of pieces of two invariant continuous graphs having the same rational rotation number. Then, as $\mathscr E_m$ belongs to the boundary of $\mathscr I_m$ by construction, and as we have proved that any essential invariant graph passing through points in $\mathscr E_m$ must have rotation number equal to $m$, then the boundary of $\mathscr I_m$ is composed of pieces of invariant graphs of rotation number $m$.

{\it Step 3.} It remains to prove point 2 of the thesis, that is to estimate the Lebesgue measure of the regions $\mathscr I_m$, with $m$ large enough. If $\widetilde T$ admits no essential invariant curves near the boundary, there is nothing to prove, so we will make the assumption that $\widetilde T$ admits sequences of invariant graphs accumulating on the boundary. Then, choose an angle $\f_\star$ belonging to $J_\Gamma$, that is to the largest connected component of  the set $$
\left	\{\f\in \T \ |\ \rho'(\f)\in [\max_{x\in \T}\rho'(x)/2,\max_{x\in \T}\rho'(x)]\right \},
$$
and consider the unique angle $\t_m$ for which $(\varphi_\star,\theta_m)\in \mathscr G_m$. By formula \eqref{Fm}, we know that 
\begin{equation}\label{thetam}
	\begin{split}
		F_m(\f_\star,\t_m)=0 \quad \Longleftrightarrow \quad \t_m= \frac{\ell}{2m\pi-\displaystyle\frac23 \ell\rho'(\f_\star)-R(\f_\star,\t_m)}	
		\end{split}
\end{equation}
with $R(\f_\star,\t_m)=O(\t_m)$. Moreover, we also know by \eqref{eq_billexp} that, for any $(\f,\t)\in \T\times (0,\delta]$, with $\delta$ sufficiently small, the angle of reflection satisfies $\Pi_\t T(\f,\t) - \t=\bar \t-\t=-2/3 \rho'(\f)\t^2+O(\t^3)$, so that
\begin{equation}\label{curvatura_piccola}
	\Pi_\t T(\f,\t)= \t \qquad \Longleftrightarrow \qquad \frac23\rho'(\f)=O(\t).
\end{equation}
As $\Gamma$ is not a circle, the periodicity of $\rho(\f)$ implies $\rho'(\varphi_\star)\ge \max_{x\in\T}\rho'(x)/2>0 $, so that formula \eqref{curvatura_piccola} ensures that
$
\Pi_\t \widetilde  T(\varphi_\star,\theta_m) \neq \theta
$, 
that is $(\varphi_\star,\theta_m)\not \in \mathscr E_m$ for all sufficiently large $m$. 

Now, we know that the boundary of $\mathscr I_m$ is given by pieces of invariant graphs with rotation number $m$. Call $C$ the graph whose point of abscissa $\f_\star$ has minimal vertical distance from $(\f_\star,\t_m)$ ($C$ is not necessarily unique). We know that such a distance cannot be zero, due to Lemma \ref{lemma_vertically_mapped_points}, as we have just proved that $(\varphi_\star,\theta_m)\not \in \mathscr E_m$. So, we denote by $\theta_\star$ the unique angle satisfying $(\varphi_\star,\theta_\star)\in C$, and we set 
\begin{equation}
\beta=\beta(\f_\star):=\theta_\star-\theta_m\neq 0.
\end{equation}
We know that 
\begin{equation}\label{beta}
|\beta|\le \displaystyle \frac{3\pi}{\ell}\theta_m^2
\end{equation}
by Corollaries \ref{width} and \ref{band}.

{\it Step 4.} As $C$ is Lipschitz, the slope
\begin{equation}\label{Lower Lipschitz}
\mathscr L_C(\varphi_\star,\theta_\star):= \frac{|\Pi_\theta \widetilde T(\varphi_\star, \theta_\star)-\theta_\star|}{\|\Pi_\f \widetilde T(\varphi_\star, \theta_\star)-\varphi_\star\|_\T}
\end{equation} 
is well-defined and, as $(\f_\star,\t_\star)\in \mathscr D_m$ the denominator in the formula above is nonzero. 
We now estimate $\mathscr L_c(\f_\star,\t_\star)$. Sufficiently close to the boundary, hence for sufficiently small $\theta_m$, we have that (remember the bound \eqref{beta})
\begin{align}
|\Pi_\theta \widetilde T(\varphi_\star, \theta_\star)-\theta_\star|=&|\Pi_\theta \widetilde T(\varphi_\star, \theta_m+\beta)-\theta_m-\beta|\\
\stackrel{\eqref{eq_billexp}}{=}&\left|\frac23 \rho'(\varphi_\star)(\theta_m+\beta)^2+O(\theta_m+\beta)^3\right|\\
=&\left|\frac23 \rho'(\varphi_\star)\theta_m^2\left(1+O\left(\frac{\beta}{\theta_m}\right)\right)\right|\ge \frac12 \rho'(\varphi_\star)\theta_m^2 \label{bound theta}
\end{align}
and, by the computation in formula \eqref{dist},
\begin{equation}\label{bound phi}
\|\Pi_\f \widetilde T(\varphi_\star, \theta_\star)-\varphi_\star\|_\T=\left\|\-\frac{\ell\beta}{\theta_m^2}+O\left(\frac{\ell\beta^2}{\theta_m^3}\right)+\partial_\t R(\varphi_\star, \widetilde \theta_m)\beta+2m\pi\right\|_\T\le \frac{3\,\ell |\beta|}{2\,\theta_m^2},
\end{equation}
where $\widetilde \theta_m$ lies between $\theta_\star$ and $\theta_m$.
Putting \eqref{Lower Lipschitz} together with \eqref{bound theta} and \eqref{bound phi} we obtain the following lower bound on $\mathscr L_C(\varphi_\star,\theta_\star)$:
\begin{equation}\label{lower_bound}
\mathscr L_C(\varphi_\star,\theta_\star)\ge \frac{\rho'(\varphi_\star) \,\theta_m^4}{3 \ell\, |\beta|}.
\end{equation}
However, we know by Lemmas \eqref{lemma_herman} and \ref{lemma_Lipschitz} that $\mathscr L_C(\varphi_\star,\theta_\star)$ should not be too big either, in particular one must have  
\begin{equation}\label{bound_lipschitz}
\mathscr L_C(\varphi_\star,\theta_\star)\le 4 \left(\frac{1}{\ell}+\frac23 \max_{x\in \T}\rho''(x)\right)\theta_m^2
\end{equation}
where we have used the fact that $\theta_m=\theta_0+O(\theta_0^2)$. 

By comparing \eqref{lower_bound} with \eqref{bound_lipschitz}, we obtain
\begin{equation}\label{intermedia}
|\beta(\f_\star)|\ge \frac{\rho'(\varphi_\star)}{4(3+2\ell\max_{x\in \T}\rho''(x))}\theta_m^2.
\end{equation}

The construction above can be repeated for any $\f_\star\in J_\G$. Hence, taking Step 1 into account, the Lebesgue measure of $\mathscr I_m$ is bonded from below by the area of the rectangle whose base has length $\text{meas}(J_\G)$ and whose height has length $\min_{x\in J_\Gamma} |\beta(x)|$, that is
\begin{equation}\label{misura quasi}
	\text{meas} (\mathscr I_m)\ge  \min_{x\in J_\Gamma} |\beta(x)| \times \text{meas }(J_\Gamma)\stackrel{\eqref{intermedia}}{=}\frac{\max_{x\in \T}\rho'(x)\ \theta_m^2}{8(3+2\ell\max_{x\in \T}\rho''(x))}\text{meas }(J_\Gamma),
\end{equation}
where we have taken into account the fact that $\min_{x\in J_\G}\rho'(\f_\star)=\max_{x\in \T}\rho'(x)/2$. 
Finally, formula \eqref{thetam} implies that, for $m$ large enough, we can write
\begin{equation}
	\t_m= \frac{\ell}{2m\pi-\displaystyle\frac23 \ell\rho'(\f_\star)-R(\f_\star,\t_m)}\ge\frac{\ell}{2m\pi-\displaystyle\frac23 \ell\rho'(\f_\star)+1}, 
\end{equation}
so that, by plugging the above expression into \eqref{misura quasi}, and by using again the fact that $\rho'(\f_\star)\ge  \max_{x\in \T}\rho'(x)/2$, we obtain 
\begin{equation}
	\text{meas} (\mathscr I_m)\ge \frac{\ell^2\,\max_{x\in \T}\rho'(x)}{8(3+2\ell \max_{x\in \T}\rho''(x))}\times \frac{ \text{meas} (J_\G)}{\left(2m \pi +1-\displaystyle \frac13 \ell \max_{x\in \T}\rho'(x)\right)^2}.
\end{equation} 

This concludes the proof.
\end{proof}

\section{Numerical Experiments with Elliptical Coins}\label{sec_num}

In this section we show the phase portraits of several elliptical coin billiards $\C \left(\mathcal E_{a,b}, \ell \right)$ where $\mathcal E_{a,b} = \left\{ (x_1, x_2) \in \R^2 : x_1^2/a^2 + x_2^2/b^2 = 1 \right\}$. The height of the coin is fixed at $\ell=1.3$, the semiminor axis of the ellipse is fixed at $b=1$, and the semimajor axis takes the values $a=1, 1.2, 1.4, 1.6, 1.8, 2, 3, 4$. The first four pictures are contained in Figure \ref{figure_exp1pt1}, and the other four in Figure \ref{figure_exp1pt2}. 

Obviously for $a=1$ the ellipse is simply a circle, and so the coin mapping is completely integrable, with the angle of incidence/reflection $\t$ being a constant of motion. This can be seen clearly in Figure \eqref{figure_exp1pt1a} where the dynamics takes place on horizontal lines in the phase space. When we increase the semimajor axis $a$ and leave $b$ fixed, we obtain noncircular ellipses. With regards to the billiard in a noncircular ellipse, it is well-known that there are exactly two 2-periodic orbits: one along the major axis and one along the minor axis. Moreover the 2-periodic orbit along the major axis is hyperbolic, whereas the orbit along the minor axis is elliptic. These 2-periodic orbits of $T_1$ remain 2-periodic orbits of $T$, because they correspond to $\t = \frac{\pi}{2}$, and $T_2 \left(\f, \frac{\pi}{2} \right) = \left(\f, \frac{\pi}{2} \right)$ for any value of $\f$. Furthermore it can be shown by direct computation of the eigenvalues of $DT$ at the 2-periodic orbit on the minor axis that this orbit remains an elliptic orbit for $T$. This can clearly be seen in each phase portrait other than \eqref{figure_exp1pt1a}, where there are large elliptic islands around the elliptic periodic points.  

In Figures \eqref{figure_exp1pt1b} and \eqref{figure_exp1pt1c} we can clearly see some invariant curves near the boundary of the phase space, as described by Theorem \ref{theorem_nckam}. However these invariant curves appear to be destroyed when the parameter $a$ is increased further. In addition some elliptic islands are visible near the boundary for each value of $a$ at least up to $1.8$. When $a$ is moved beyond 1.8, the chaotic sea becomes larger, destroying the elliptic island in the centre of the phase portrait \eqref{figure_exp1pt2a}. There still appears however to be some regularity near the boundary in \eqref{figure_exp1pt2b}. However in \eqref{figure_exp1pt2c} and \eqref{figure_exp1pt2d} no regularity can be seen near the boundary, in keeping with Theorem \ref{theorem_nocurves}, and the chaotic sea appears to reach all of the phase space except for the elliptic islands surrounding the 2-periodic elliptic points. 

\begin{figure} 
    \centering
    \begin{subfigure}[b]{0.45\textwidth}
        \includegraphics[width=\textwidth]{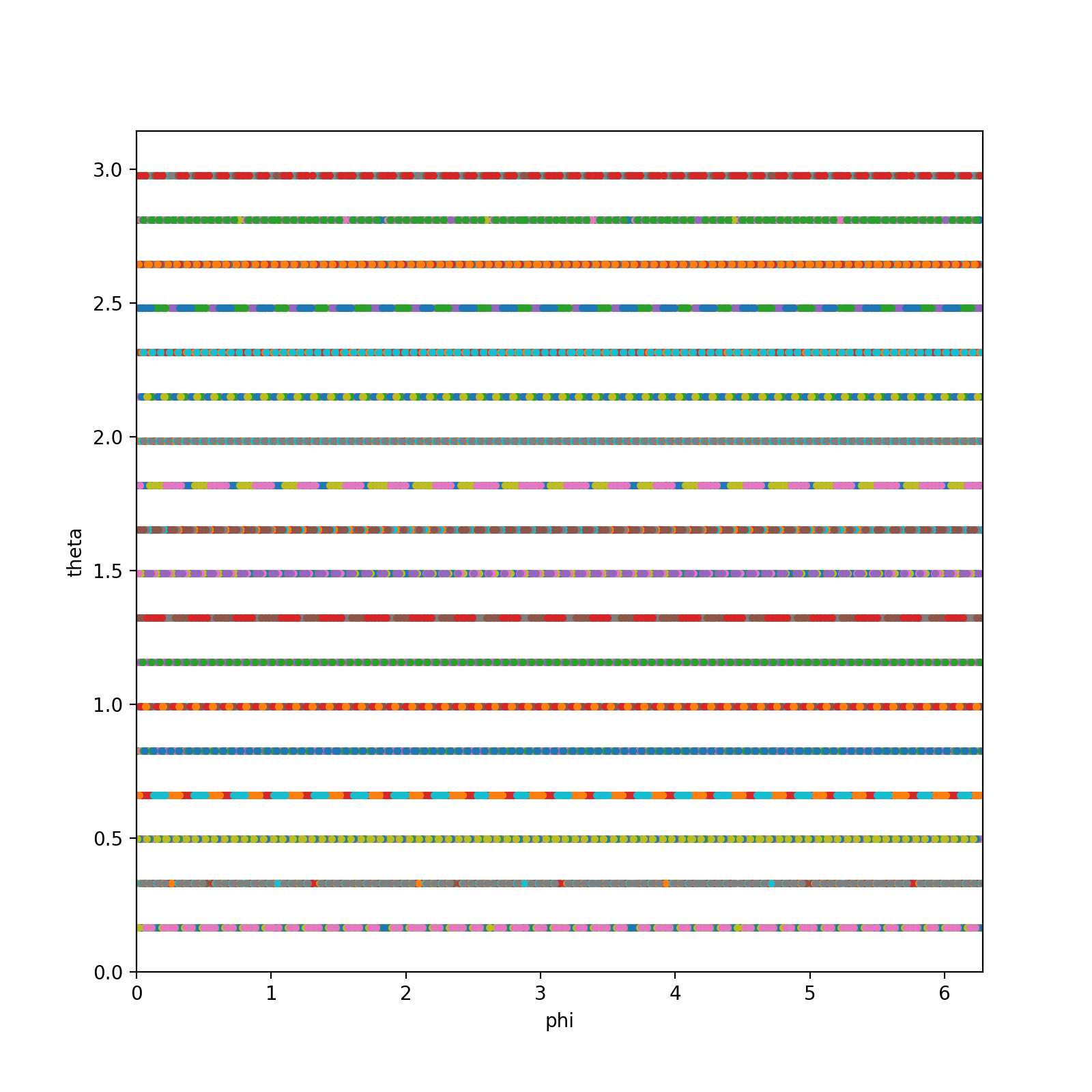}
        \caption{$a=1, \, b=1, \, \ell=1.3$}
        \label{figure_exp1pt1a}
    \end{subfigure} 
    \begin{subfigure}[b]{0.45\textwidth}
        \includegraphics[width=\textwidth]{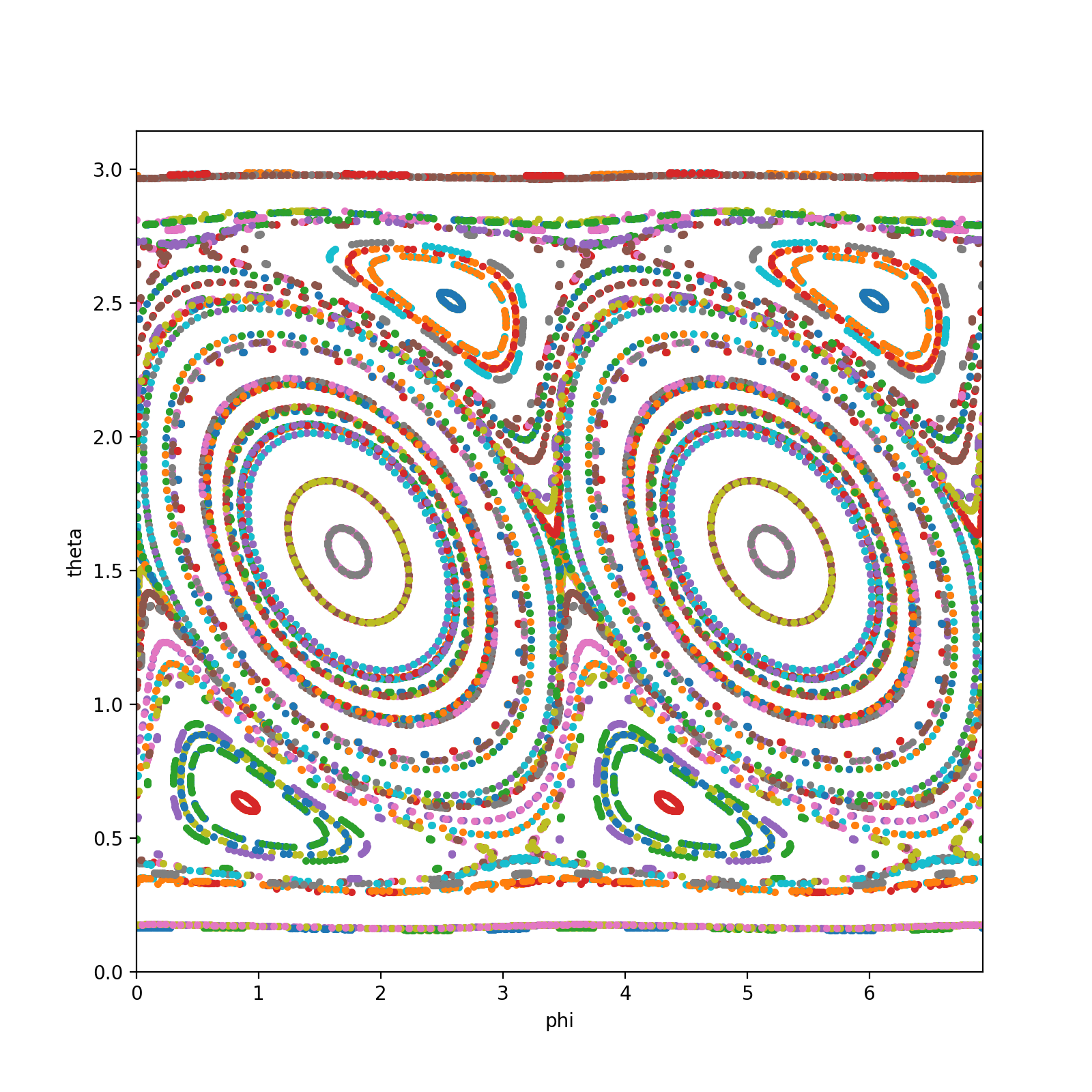}
        \caption{$a=1.2, \, b=1, \, \ell=1.3$}
        \label{figure_exp1pt1b}
    \end{subfigure}

    \begin{subfigure}[b]{0.45\textwidth}
        \includegraphics[width=\textwidth]{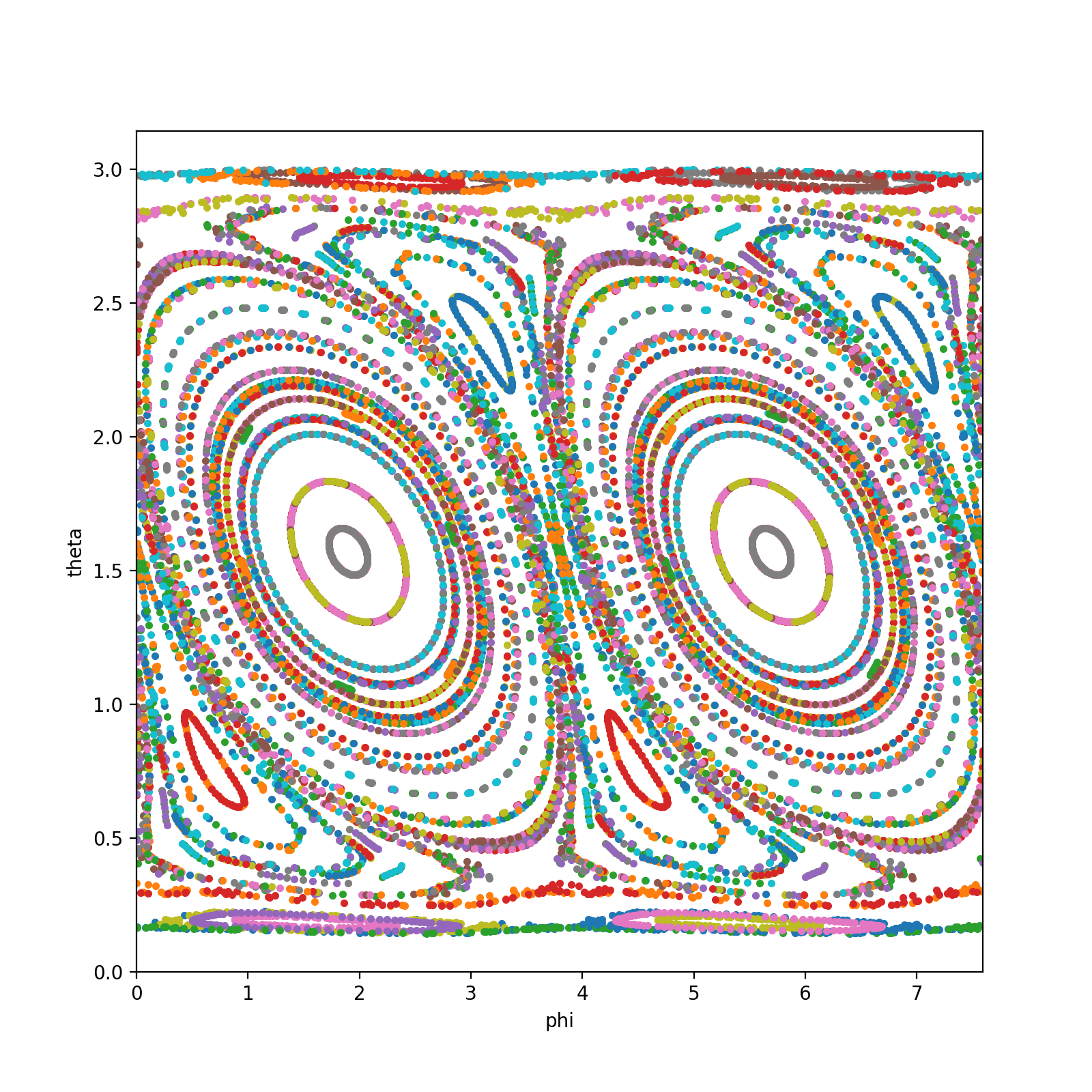}
        \caption{$a=1.4, \, b=1, \, \ell=1.3$}
        \label{figure_exp1pt1c}
    \end{subfigure}  
    \begin{subfigure}[b]{0.45\textwidth}
        \includegraphics[width=\textwidth]{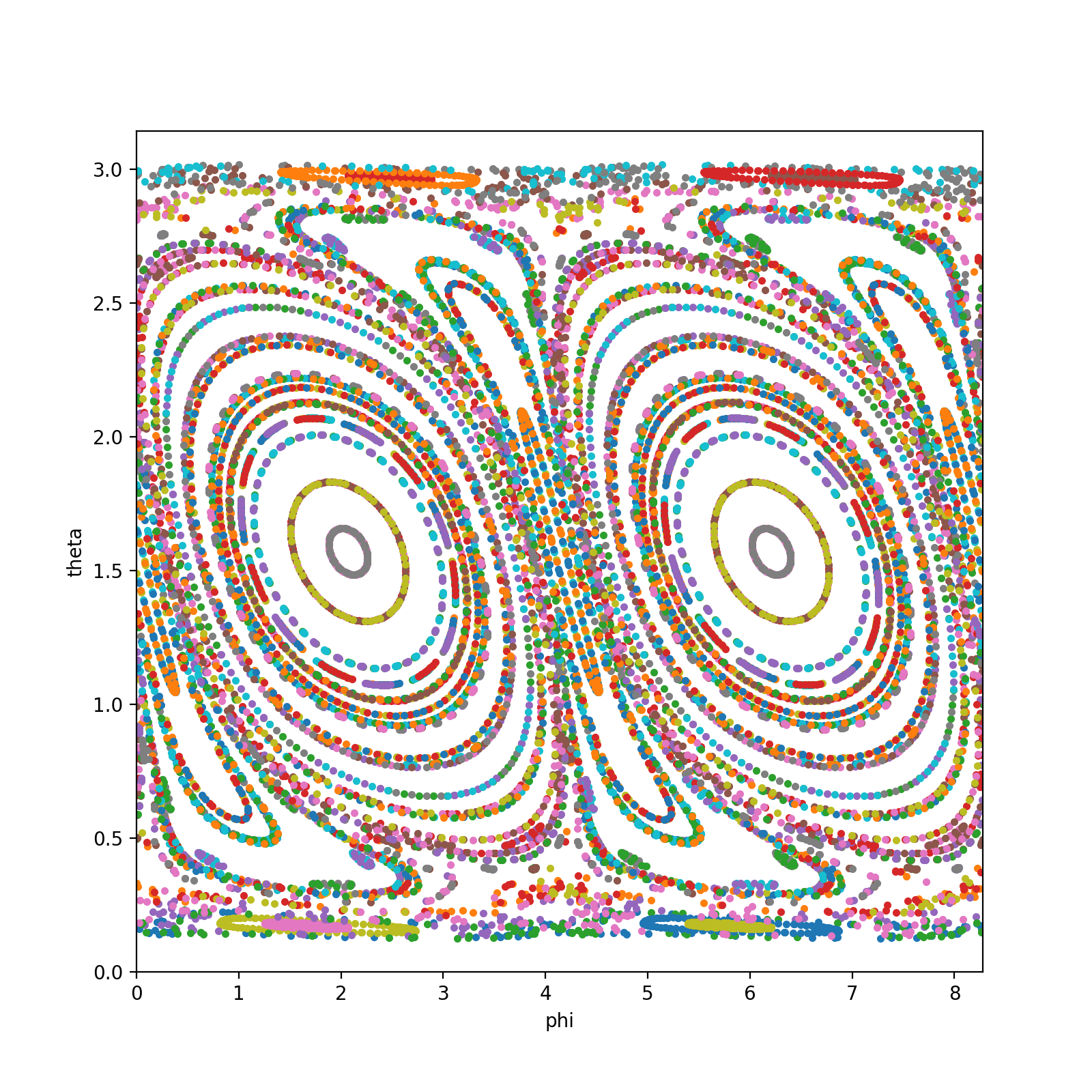}
        \caption{$a=1.6, \, b=1, \, \ell=1.3$}
        \label{figure_exp1pt1d}
    \end{subfigure}
    \caption{Each picture shows the first 100 iterations of various initial conditions under the coin mapping $T$ corresponding to the coin $\C \left( \mathcal{E}_{a,b},\ell \right)$, where $\mathcal{E}_{a,b}$ is ellipse with semimajor axis $a$ and semiminor axis $b$. We chose 144 different initial conditions spread evenly through the phase space. Different colours represent different orbits. The horizontal axis is the arclength parameter, and the vertical axis is the angle of incidence/reflection.}\label{figure_exp1pt1}
\end{figure}

\begin{figure} 
    \centering
    \begin{subfigure}[b]{0.45\textwidth}
        \includegraphics[width=\textwidth]{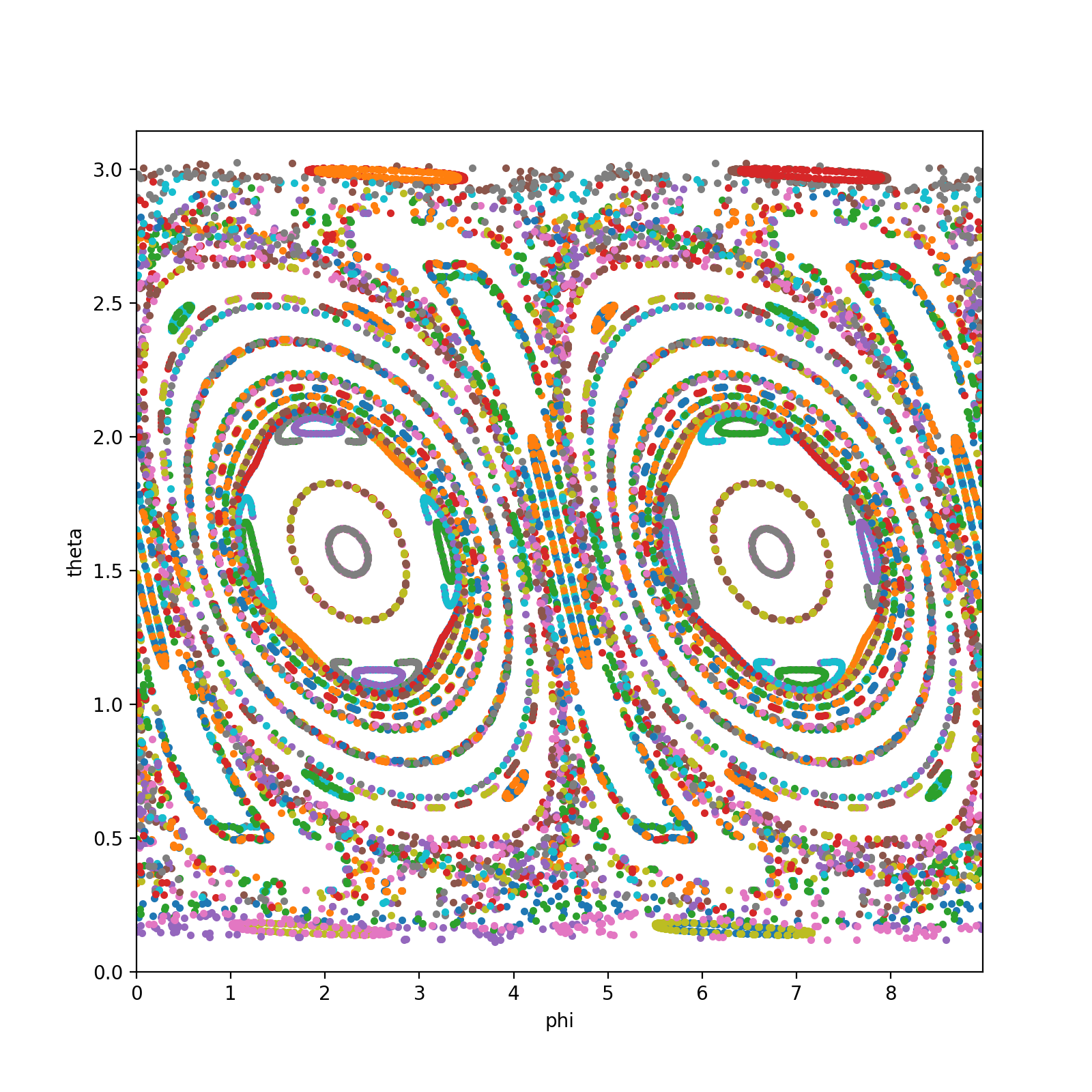}
        \caption{$a=1.8, \, b=1, \, \ell=1.3$}
        \label{figure_exp1pt2a}
    \end{subfigure} 
    \begin{subfigure}[b]{0.45\textwidth}
        \includegraphics[width=\textwidth]{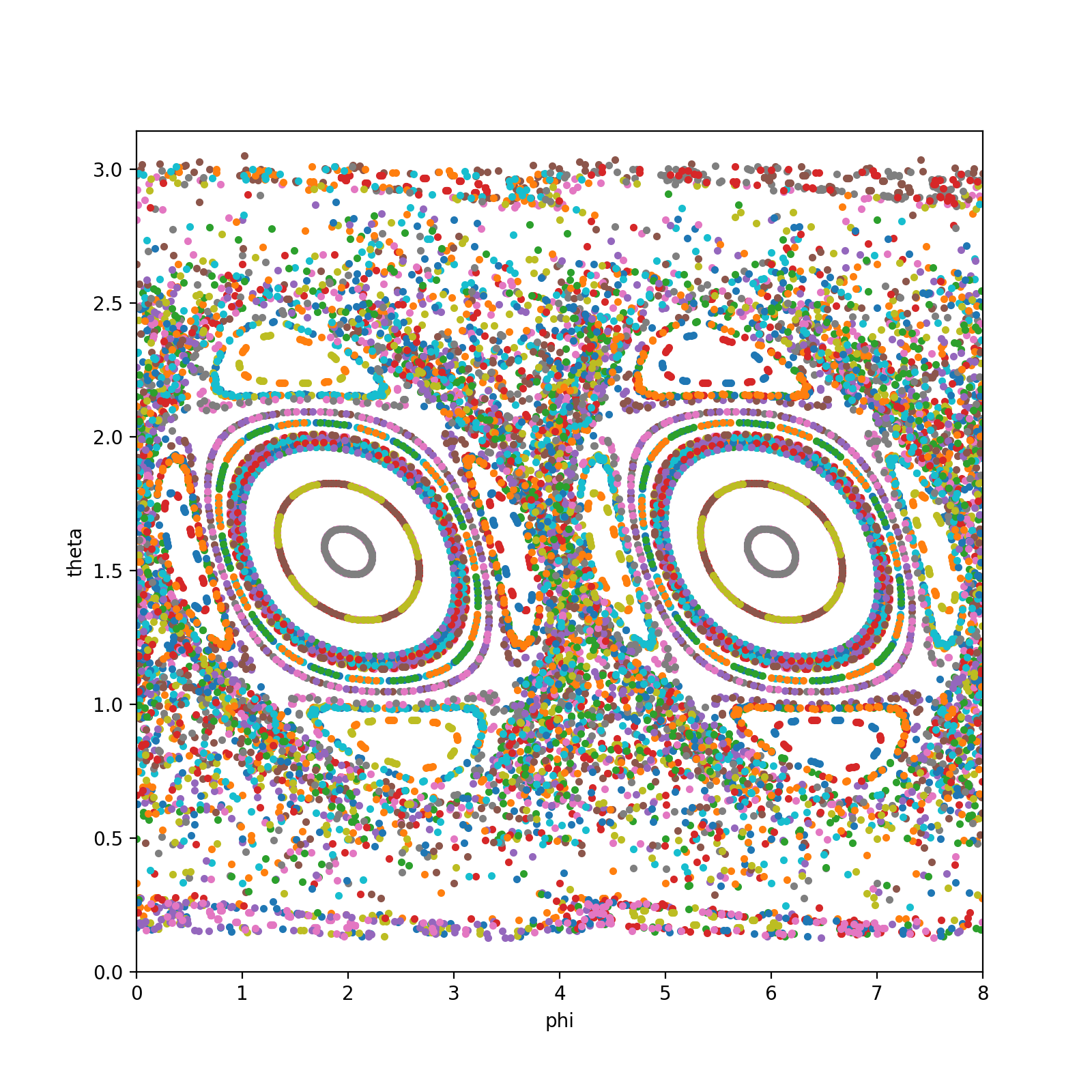}
        \caption{$a=2, \, b=1, \, \ell=1.3$}
        \label{figure_exp1pt2b}
    \end{subfigure}

    \begin{subfigure}[b]{0.45\textwidth}
        \includegraphics[width=\textwidth]{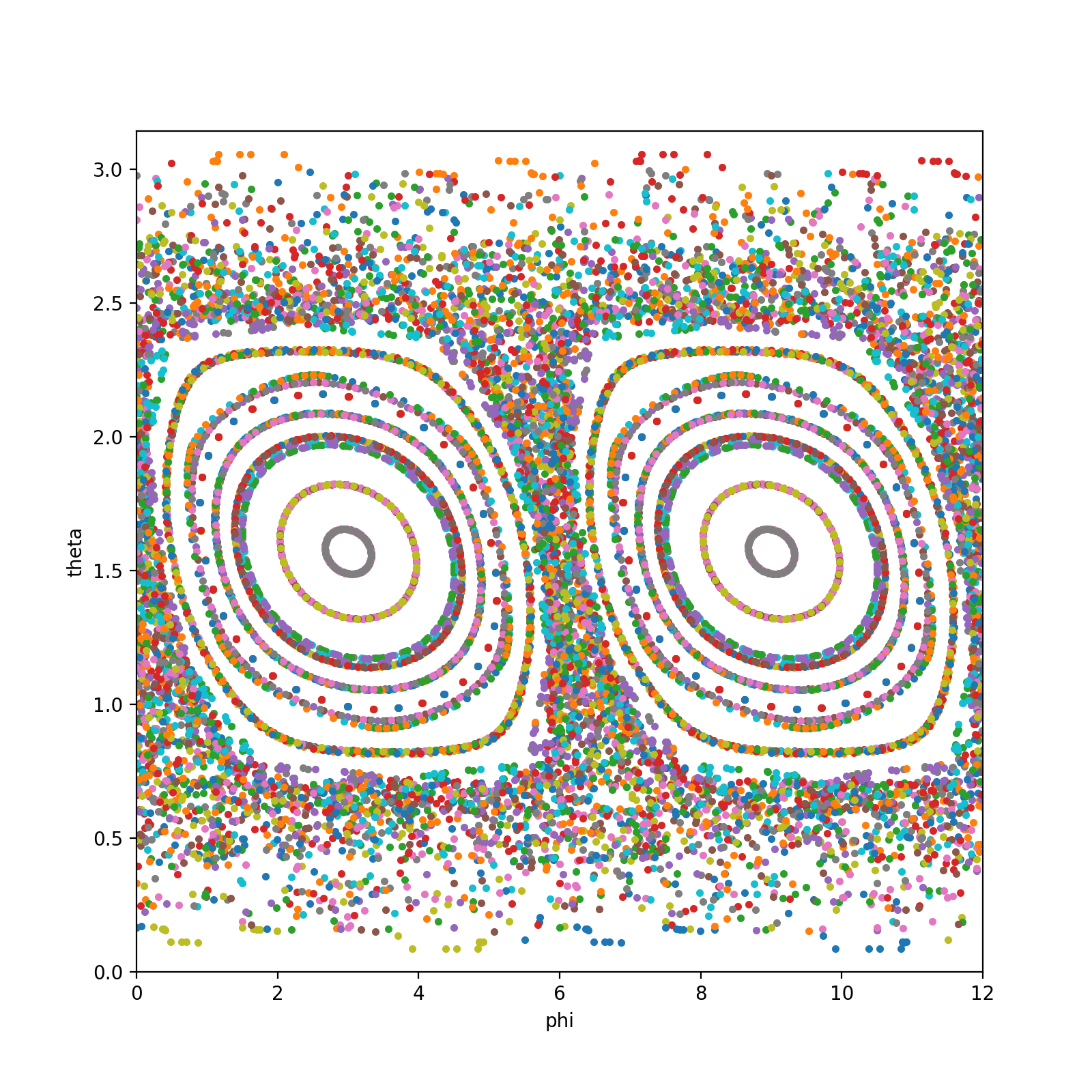}
        \caption{$a=3, \, b=1, \, \ell=1.3$}
        \label{figure_exp1pt2c}
    \end{subfigure}  
    \begin{subfigure}[b]{0.45\textwidth}
        \includegraphics[width=\textwidth]{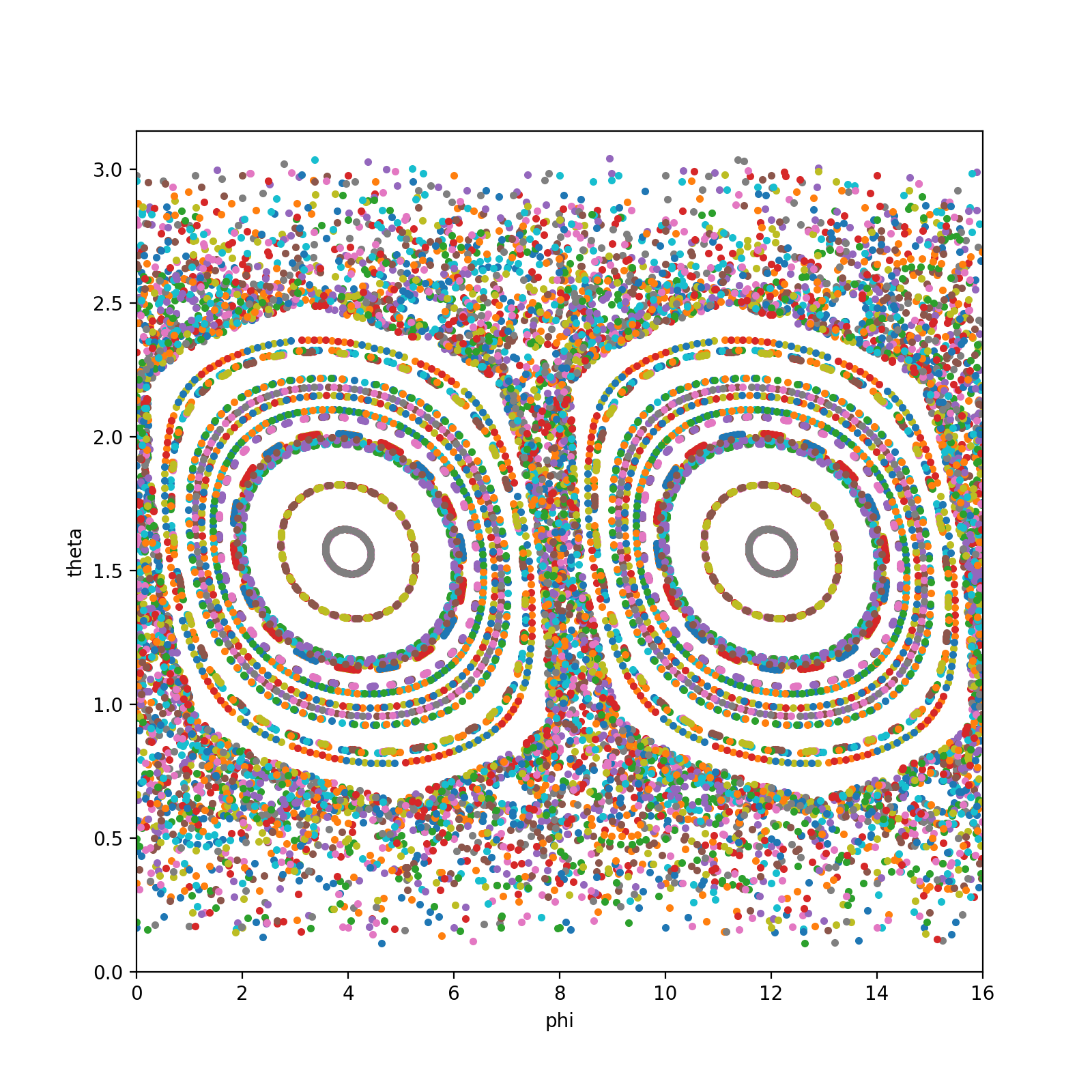}
        \caption{$a=4, \, b=1, \, \ell=1.3$}
        \label{figure_exp1pt2d}
    \end{subfigure}
    \caption{Each picture shows the first 100 iterations of various initial conditions under the coin mapping $T$ corresponding to the coin $\C \left( \mathcal{E}_{a,b},\ell \right)$, where $\mathcal{E}_{a,b}$ is ellipse with semimajor axis $a$ and semiminor axis $b$. We chose 144 different initial conditions spread evenly through the phase space. Different colours represent different orbits. The horizontal axis is the arclength parameter, and the vertical axis is the angle of incidence/reflection.}\label{figure_exp1pt2}
\end{figure}

\newpage

\appendix
\section{Moser's Theorem}\label{Appendix_Moser}

In this appendix, we give the necessary definitions, and the statement of the version of Moser's theorem on the existence of essential invariant curves for twist maps that we use in this paper. As mentioned earlier in the paper, we do not choose the strongest possible version of this result, but rather the simplest for our exposition. The version of the theorem presented here is from Sections 32 and 33 of \cite{siegel2012lectures}. 

Consider a map $F: \T \times [a,b] \to \T\times  (a',b')$ defined by
\begin{equation}
F: 
\begin{dcases}
\bar x ={}& x + \alpha (y) + f(x,y) \\
\bar y ={}& y + g(x,y)
\end{dcases}
\end{equation}
where $x \in \T$, $y \in [a,b]$, and $a' < a < b < b'$. We make the following assumptions regarding $F$:
\begin{enumerate}[(i)]
\item \label{item_app1}
$F$ is real-analytic;
\item \label{item_app2}
$F$ satisfies the twist property $\alpha'(y) \neq 0$ for all $y \in [a,b]$; and
\item \label{item_app3}
F satisfies the \emph{intersection property}: for any essential curve of the form $C=\{ (x, \zeta (x)) : x \in \T \}$ where $\zeta \in C^0(\T,[a,b])$ we have $F(C) \cap C \neq 0$. 
\end{enumerate}

Since $f$, $g$ are real-analytic, they admit holomorphic extensions to a complex neighbourhood $\U$ of $\T \times [a,b]$. We denote by $\| . \|$ the supremum norm on the space of holomorphic functions on $\U$. A condition of the theorem is that $\|f \|$ and $\| g \|$ are sufficiently small. Observe that when $\|f \|, \| g \| = 0$ then for each $y_0 \in [a,b]$, the curve $\{y=y_0\}$ is an essential $F$-invariant curve with rotation number $\omega = \alpha (y_0)$. It has been known since the works of Poincar\'e that invariant curves with a rotation number of the form $\omega = 2 \pi \, p/q $ where $p/q \in \mathbb Q$ can be destroyed by arbitrarily small perturbations satisfying conditions \eqref{item_app1}-\eqref{item_app3}. Instead, we consider frequencies which are not only irrational multiples of $2 \pi$, but are poorly approximated by rational multiples of $2 \pi$: we say that $\omega$ is \emph{Diophantine} and write $\omega \in \D$ if there are $k >0$ and $\mu \geq 2$ such that
\[
\left| \frac{\omega}{2 \pi} - \frac{p}{q} \right| \geq \frac{k}{|q|^{\mu}}
\]
for all $p, q \in \mathbb Z$ with $q \neq 0$. Then Moser's theorem tells us that any curve $\{ y = y_0\}$ that is not too close to the boundary of $\T \times [a,b]$ for which the rotation number $\omega = \alpha (y_0)$ is Diophantine survives the perturbation to small nontrivial functions $f$ and $g$, in the sense that if $\|f\|$ and $\| g \|$ are sufficiently small, then the map $F$ has an invariant essential curve $C$ that is close to $\{ y = y_0\}$ such that $F|_C$ has rotation number $\omega$. 

\begin{theorem}[Moser] \label{theorem_moser}
Let $a_0, b_0 \in \R$ with $a < a_0 < b_0 < b$. For any $\epsilon >0$ there is $\delta >0$ such that if $\| f \| + \| g \| < \delta$ then for each $y_0 \in [a_0,b_0]$ such that $\alpha(y_0) \in \D$ the map $F$ has an invariant essential curve $C$ that is $\epsilon$-close to $\{y=y_0\}$ such that $F|_C$ has rotation number $\alpha(y_0)$. Moreover these invariant curves fill up a set of positive measure in $\mathbb{T} \times [a_0,b_0]$. 
\end{theorem}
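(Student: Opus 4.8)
The statement is Moser's twist theorem, and I would establish it by the classical analytic KAM iteration (as in \cite{siegel2012lectures}); here I describe the architecture. The plan is to recast the existence of an invariant curve of rotation number $\omega:=\alpha(y_0)$ as a conjugacy problem with a counterterm. Replacing $\alpha$ by $\alpha+\beta$ for an unknown constant $\beta$ (whose purpose is to render the linearized problems solvable), I seek $2\pi$-periodic real-analytic functions $U$, $W$ and a number $\beta$ such that $\t\mapsto(\t+U(\t),W(\t))$ embeds $\T$ into $\T\times(a,b)$ and conjugates the rigid rotation $\t\mapsto\t+\omega$ to $F_\beta$; writing this out gives the system
\begin{equation}
U(\t+\omega)-U(\t)=\alpha(W(\t))-\omega+\beta+f(\t+U(\t),W(\t)),\qquad W(\t+\omega)-W(\t)=g(\t+U(\t),W(\t)).
\end{equation}
The triple $(U,W,\beta)=(0,y_0,0)$ solves this up to a residual of size $\e:=\|f\|+\|g\|$, holomorphic on a fixed complex strip around $\T\times[a,b]$; the task is to correct it to an exact solution once $\e$ is small and, at the end, to show that the resulting $\beta$ vanishes.

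The core is a single Newton step. Linearizing the system at an approximate solution with residual $(\mathcal E_1,\mathcal E_2)$ of size $\e$ holomorphic on a strip of width $r$, I seek corrections $(\hat U,\hat W,\hat\beta)$ solving the linearized equations up to errors $O(\e^2)$ on a strip narrowed by a fixed fraction of $r$. Discarding quadratically small terms, these reduce to the cohomological equations $\hat W(\t+\omega)-\hat W(\t)=-\mathcal E_2(\t)+\langle\mathcal E_2\rangle$ and $\hat U(\t+\omega)-\hat U(\t)=-\mathcal E_1(\t)+\alpha'(y_0)\hat W(\t)+c$, each solved by comparing Fourier coefficients: the Diophantine condition $\omega\in\D$ bounds the small divisors $|e^{ik\omega}-1|^{-1}$ by a power of $|k|$, at the cost of shrinking the analyticity strip. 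Solvability of the first equation forces $\hat\beta=-\langle\mathcal E_2\rangle=O(\e)$; solvability of the second fixes the free additive constant of $\hat W$ via $\alpha'(y_0)\langle\hat W\rangle=\langle\mathcal E_1\rangle$, and this is exactly where the twist hypothesis $\alpha'(y_0)\neq 0$ is used. Substituting back into $F$ and re-expanding, the new residual is $O(r^{-a}\e^2)$ for some exponent $a=a(\mu)$.

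Iterating with strips $r_n=r_0 2^{-n}$ and residuals obeying $\e_{n+1}\le C r_n^{-a}\e_n^{2}$, the usual bootstrap gives $\e_n\to 0$ super-exponentially, provided $\e_0$ is below a threshold depending on $r_0$, $a$, the Diophantine constants $k,\mu$, and $|\alpha'(y_0)|$. The corrections $\hat W_n$ are summable, so $W$ stays inside $[a,b]$ — this is where the hypothesis $y_0\in[a_0,b_0]$, bounded away from the boundary, enters — and the limit $(U_\infty,W_\infty)$ is analytic on a strip of width $r_0/2$ and parametrizes an essential curve $C_{y_0}$ that is exactly invariant under $F_{\beta_\infty}$, $\beta_\infty:=\sum_n\hat\beta_n=O(\e_0)$, with $F_{\beta_\infty}|_{C_{y_0}}$ conjugate to rotation by $\omega$. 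To remove the counterterm: since $C_{y_0}$ is a graph over $\T$ invariant under $F_{\beta_\infty}$, a direct check shows $F(C_{y_0})$ is the rigid vertical translate of $C_{y_0}$ by $-\beta_\infty$; a graph and a nonzero vertical translate of it are disjoint, so $\beta_\infty\neq 0$ would give $F(C_{y_0})\cap C_{y_0}=\varnothing$, contradicting the intersection property. Hence $\beta_\infty=0$, and $C_{y_0}$ is invariant under $F$ itself.

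For the last assertion: by the twist property $\alpha$ is a diffeomorphism of $[a_0,b_0]$ onto its image, so $E:=\alpha^{-1}(\D)\cap[a_0,b_0]$ has positive (indeed full) measure, since the complement of $\D$ in any interval is Lebesgue-null. Running the iteration with the standard Lipschitz (Whitney-$C^1$) dependence on the parameter $y_0$, the curves $C_{y_0}$ for $y_0\in E$ are the image of $\T\times E$ under a Lipschitz map whose Jacobian is $1+O(\e)\ge\tfrac12$ a.e. and which is injective (distinct rotation numbers give disjoint graphs, and each conjugacy $\t\mapsto\t+U_\infty(\t)$ is a circle homeomorphism), so by the area formula their union has measure $\ge\tfrac12\,|\T|\,|E|>0$ in $\T\times[a_0,b_0]$. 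The main obstacle, as always in KAM, is the quantitative bookkeeping of the Newton step — balancing the polynomial small-divisor loss against the geometric loss of analyticity width so that the quadratic gain survives uniformly through the iteration — together with keeping the counterterm $\beta_n$, and the dependence on the parameter $y_0$, under control.
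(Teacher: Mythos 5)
The paper itself gives no proof of this statement --- it is quoted from Sections 32--33 of Siegel--Moser \cite{siegel2012lectures} --- so the only meaningful comparison is with the classical argument you are sketching. Your architecture is a legitimate variant of it: Siegel--Moser run the Newton iteration \emph{without} a counterterm and invoke the intersection property at every step (the approximate curve meets its image, so the mean of the vertical displacement is dominated by its oscillation and is of quadratic order), whereas you use a R\"ussmann-style translated-curve scheme, carrying a counterterm through the iteration and using the intersection property only once, at the end, to kill it. Both routes are sound, and the rest of your bookkeeping (small divisors from $\omega\in\D$ at the cost of strip width, the twist $\alpha'(y_0)\neq 0$ fixing $\langle\hat W\rangle$, shrinking strips with quadratic gain, Lipschitz/Whitney dependence on $y_0$ for the positive-measure statement) is the standard one.

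There is, however, one internal inconsistency that, as written, breaks two steps. You introduce the counterterm by replacing $\alpha$ with $\alpha+\beta$, i.e. a shift in the $x$-component, but you then use $\hat\beta$ to absorb $\langle\mathcal E_2\rangle$ in the cohomological equation for $\hat W$, and at the end you claim $F(C_{y_0})$ is the \emph{vertical} translate of $C_{y_0}$ by $-\beta_\infty$. With a horizontal counterterm neither holds: the $\hat W$-equation contains no free parameter, so its mean obstruction $\langle\mathcal E_2\rangle$ cannot be removed, and a horizontal translate of an essential graph always intersects the graph (the difference of the two height functions is continuous, periodic and of zero mean), so the intersection property would not force $\beta_\infty=0$. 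The standard fix is to place the counterterm in the second component, $\bar y = y+g(x,y)+\beta$ (R\"ussmann's translated curve theorem): then $\hat\beta$ kills $\langle\mathcal E_2\rangle$, the twist fixes $\langle\hat W\rangle$ from the $\hat U$-equation exactly as you wrote, and the final disjointness-plus-intersection-property argument is precisely the one you give. With that correction your sketch is the standard proof of the theorem the paper cites.
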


\subsection*{Acknowledgments}
The authors would like to thank Misha Bialy for proposing an interesting problem. We thank Rafael Ramírez-Ros for many helpful discussions. S.B. would like to thank Marie-Claude Arnaud, Corentin Fierobe, Anna Florio, Martin Leguil, and Jean-Pierre Marco for useful discussions. A.C. would like to thank Jacques Fejoz, Pau Martín, and Dima Turaev for useful discussions.

S.B. was supported in part by the grant RL001607 of Professor M. Guàrdia funded by the Catalan Institution for Research and Advanced Studies (ICREA), and in part by the Juan de la Cierva fellowship JDC2023-052632-I.  

A.C. was supported in part by the Grant PID-2021-122954NB-100 which was funded by MCIN/AEI/10.13039/501100011033 and “ERDF: A way of making Europe”.

\bibliographystyle{abbrv}
\bibliography{cm_ic} 
\end{document}